\newtheorem{rem}{Remark}
\newtheorem{prop}{Proposition}
\newtheorem{thm}{Theorem}
\def\be{\begin{equation}}
\def\ee{\end{equation}}
\def\bea{\begin{eqnarray}}
\def\eea{\end{eqnarray}}
\def\RR{\mathbb R}
\def\hf{\hat{\textbf{f}}}
\def\hg{\hat{\textbf{g}}}
\def\hfq{\widehat{\textbf{f\,}^q}}
\def\z{\mathbf{z}}
\def\u{\mathbf{u}}
\def\v{\mathbf{v}}
\def\x{\mathbf{x}}
\def\y{\mathbf{y}}
\begin{document}

\title{Micro-macro stochastic Galerkin methods for nonlinear Fokker-Plank equations with random inputs}

\date{}

\author{Giacomo Dimarco \thanks{Department of Mathematics and Computer Science \& Center for Modeling, Computing and Statistics CMCS, University of Ferrara, Via N. Machiavelli 35, 44121 Ferrara, Italy ({\tt giacomo.dimarco@unife.it}).} \and 
Lorenzo Pareschi \thanks{Department of Mathematics and Computer Science \& Center for Modeling, Computing and Statistics CMCS, University of Ferrara, Via N. Machiavelli 35, 44121 Ferrara, Italy ({\tt lorenzo.pareschi@unife.it}).} \and 
Mattia Zanella\thanks{Department of Mathematics "F. Casorati", University of Pavia, Via A. Ferrata 5, 27100 Pavia, Italy ({\tt mattia.zanella@unipv.it}).}}

\maketitle
\begin{abstract}
Nonlinear Fokker-Planck equations play a major role in modeling large systems of interacting particles with a proved effectiveness in describing real world phenomena ranging from classical fields such as fluids and plasma to social and biological dynamics. Their mathematical formulation has often to face with physical forces having a significant random component or with particles living in a random environment which characterization may be deduced through experimental data and leading consequently to uncertainty-dependent equilibrium states. In this work, to address the problem of effectively solving stochastic Fokker-Planck systems, we will construct a new equilibrium preserving scheme through a micro-macro approach based on stochastic Galerkin methods. The resulting numerical method, contrarily to the direct application of a stochastic Galerkin projection in the parameter space of the unknowns of the underlying Fokker-Planck model, leads to highly accurate description of the uncertainty dependent large time behavior. Several numerical tests in the context of collective behavior for social and life sciences are presented to assess the validity of the present methodology against standard ones. 
\end{abstract}

\medskip

\noindent{\bf Keywords:} Uncertainty quantification, stochastic Galerkin methods, nonlinear Fokker-Planck equations, micro-macro decomposition, collective behavior, equilibrium states \\

\medskip

\noindent{\bf Mathematics Subject Classification:} 35Q84, 35R60, 65C30, 65M70, 76M20.

\tableofcontents

\section{Introduction}
\label{sec1}
One of the most important partial differential equations (PDEs) used to describe the behavior of systems composed by a large number of interacting agents is certainly the so-called Fokker-Planck equation which owes its name to Adriaan Fokker^^>\cite{Fokker} and Max Planck^^>\cite{Planck} who first derive it at the beginning of the last century to describe the time evolution of the probability density function of the velocity of a particle under the influence of drag forces and random forces. After these pioneering works, this model has subsequently been used in classical statistical physics, rarefied gas dynamics and plasma physics^^>\cite{cercignani1988BOOK,Rosenbluth} since the middle of last century. More recently, especially in the last few decades, there has been a growing trend toward applications to interdisciplinary fields beyond mathematics and physics, ranging from biology and socio-economy^^>\cite{ha2008KRM,carrillo2010MSSET,pareschi2013BOOK} to traffic flow and crowd motion^^>\cite{HP,TZ19,dimarcotosin}. Without wishing to review the vast literature on these topics, we refer the reader to some recent papers and surveys where Fokker-Planck equations are used to model different phenomena, see for example^^>\cite{albi2016CHAPTER,pareschitoscani,during2009PRSA,furioli2017M3AS,ATZ} and the references therein. 

However, all of the above references do not take into account the important fact that uncertainties in the mathematical description of real-world phenomena seem to be unavoidable for applications, especially when dealing with these newly emerging fields. In fact, in almost all situations that differ from classical physics, we have at most statistical information about the modeling parameters, which must be estimated from experiments or derived from observations^^>\cite{albi2015MPE,Ballerini2008,Bongini2017,dimarco2017CHAPTER,TZ_17,ABBDPTZ21}. This is especially true in situations in which differential equations are used for describing self-organization dynamics in biology, economy, traffic dynamics or opinion formation. In fact, in all mentioned examples the underlying physics is not based on first principles, that are usually replaced by empirical social forces of which we have at most statistical information. In addition, in the cases in which experimental results exist, these are typically affected by some bias. As a consequence, the construction of mathematical models in those fields is dictated by the capacity of the Fokker-Planck dynamics to describe qualitatively instead of quantitatively the system's behavior and the formation of emergent social structures and results have to be interpreted in a probabilistic sense. Therefore, the methods of uncertainty quantification (UQ) play a key role in the modeling process and in assessing the validity of the results^^>\cite{dimarcoUQ,dimarcoUQ2,GHI,xiu2010BOOK,xiu2002SISC,Zan20,zhu2017MMS,Hu2016}.    

From a mathematical viewpoint, the equations we will consider in the present work are characterized by linear and nonlinear Fokker--Planck equations, i.e. by equations giving the time evolution of a probability density function in term of a balance between a linear/non linear advection (or aggregation) and a diffusion process. In addition, to this underlying dynamics, we will consider the presence of random inputs taking into account uncertainties in the initial data, in the interaction terms and/or in the boundary conditions. More precisely, in the sequel we consider the evolution of a distribution function $f(\z,\v,t)$, $t\ge 0$, $\v\in V \subseteq\RR^{d_\v}$, $d_v\ge 1$, and $\z\in I_{Z}\subseteq\RR^{d_{z}}$ a random vector with distribution $p(\z)$, representing the density of particles/agents with state $\v$ and at time $t$ whose evolution is given by the following general Fokker-Planck model
\be\begin{split}\label{eq:MF_general}
\dfrac{\partial}{\partial t} f(\z,\v,t)= \mathcal J(f,f)(\z,\v,t)
\end{split}\ee
where $\mathcal J(\cdot,\cdot)$ is a nonlinear operator describing the interaction of particles/agents.
In particular, we consider operators of the form
\be\label{eq:J_meanfield}
\mathcal J(f,f) (\z,\v,t)= \nabla_{{\v}} \cdot \big( \mathcal B[f](\z,\v,t)f(\z,\v,t)+\nabla_{{\v}} (D(\z,\v)f(\z,\v,t)) \big)
\ee
where $\mathcal B[\cdot]$ is a nonlocal term 
\begin{equation}
\label{eq:Bf}
\mathcal B[f](\z,\v,t) =   \int_{V}P(\z,\v,\v_*){(\v-\v_*)}f(\z,\v_*,t)d\v_*,
\end{equation}
whereas $D(\z,\v)\ge 0$ is a function describing the local relevance of diffusion which vanishes on the boundary of $V \subset \RR^{d_v}$ or at infinity if $V=\RR^{d_v}$.  
In \eqref{eq:Bf} the interaction forces are described by the operator $P\ge0$. We stress that, in some cases, under additional assumptions, the equilibrium solutions $f^\infty(\z,\v)$ of the Fokker-Planck operator defined by $\mathcal J(f^\infty,f^\infty)=0$ are explicitly computable. 

It is important to emphasize that, even if we concentrate on Fokker-Planck equations of the type \eqref{eq:MF_general}-\eqref{eq:J_meanfield}, the methods here developed also find natural application to the case of Vlasov-Fokker-Planck equations in the general form
\be\begin{split}\label{eq:MF_general2}
\frac{\partial}{\partial t} f(\z,\x,\v,t)+\v\cdot \nabla_{{\x}} f(\z,\x,\v,t)= \mathcal J(f,f)(\z,\x,\v,t),
\end{split}\ee
where now $\x\in\RR^{d_x}$ denotes the space variable and $\mathcal J(f,f)$ is defined as in \eqref{eq:J_meanfield} with the additional space dependence. We refer in particular to^^>\cite{pareschi2013BOOK,villani2002BOOK} for an introduction to the subject in relation with classical gas dynamics and models for collective behavior. 

From the numerical point of view, in general, the development of methods for approximating equations of type \eqref{eq:MF_general} presents several difficulties due to the high dimensionality and the intrinsic structural properties of the solution. Preservation of these structural properties, which are important in applications, is even more challenging in presence of uncertainties that contribute to increasing the dimensionality of the problem and changing the character of the equations. We refer to^^>\cite{dimarcopareschiACTA14,PZ2018} for a review on numerical schemes for such models in absence of uncertainties, while we refer to^^>\cite{dimarco2017CHAPTER,dimarcoMF,Hu2017,Pareschi} for recent surveys about numerical methods {in which the presence of uncertainties} is considered.

Among the most popular numerical methods for UQ, stochastic Galerkin {(sG)} methods based on generalized polynomial chaos (gPC) expansions gained in recent years an increased interest and play nowadays an important role. In particular, one of the main features of these methods is that they lead to spectral convergence {in a weighted $L^2$ space}  under suitable regularity assumptions on the solution^^>\cite{xiu2010BOOK,xiu2002SISC,zhu2017MMS,zhu2018SIAM,LW,Liao}. As a main drawback, however, their intrusive nature strongly modifies the original system and consequently they require the development of new {computational methods and implementations to be set into practice. }
Furthermore, when applied to kinetic and hyperbolic equations, {sG}-gPC techniques may lead to the loss of structural properties like positivity of the solution, entropy dissipation, loss of hyperbolicity and large time behavior^^>\cite{CZ2019,CPZ_17,despresBOOKCH,dimarco2017CHAPTER,GHY} and for these reasons, special care is needed when designed. 

{Beside sG-based methods, non-intrusive approaches for UQ have been developed in recent years like stochastic collocation (sC) methods^^>\cite{CEP,BNT,TZ,xiu2010BOOK,xiu_hesth,XF}, multifidelity and multilevel methods, and related approaches^^>\cite{dimarcoMF,LLiu,MS}. These methods have the nice feature to keep the structural properties of the underlying numerical solver for the deterministic problem and to avoid the construction of new discretization methods in the physical space. Even though sC methods guarantee the preservation of structural properties at each collocation node, the preservation of asymptotic properties in the whole random space is still an open problem which goes beyond the scopes of the present manuscript. In this direction we refer to^^>\cite{JXZ} where stochastic asymptotic preserving properties have been discussed in relation to PDEs with source terms. 
}

In the present work, we focus on the construction of {sG} numerical schemes for Fokker-Planck type equation such that the uncertainty dependent long time behavior of the solution is captured with high accuracy. This is achieved through a suitable reformulation of the problem using a generalization of the classical micro-macro decomposition^^>\cite{Liu} where the equilibrium part is also assumed time dependent. 
On the contrary, a standard implementation of {sG} methods leads to a loss of accuracy in the physical space when the long time behavior of the system is considered. A remarkable feature is that the novel approach provides highly accurate approximations even when the local equilibrium states of the system are not explicitly known. Analogous approaches in the deterministic case has been recently proposed in^^>\cite{Kuramoto,Filbet,Pareschi2017}.

A related problem concerns the possible loss of accuracy in the random space of the gPC projection for long-time computation, a well-known phenomenon that has already been studied in the literature (see^^>\cite{GSVK} and references therein). Here we have analyzed the problem with the help of an analytical solution of the Fokker-Planck equation highlighting the nature of this issue due to the vanishing of the variance in the random space for long times and the corresponding loss of uncertainty in the long-time behavior of the Fokker-Planck equation.  

The rest of the paper has the following structure. In the next section, we survey some examples of Fokker-Planck models with random inputs and their local equilibrium states, together with some regularity properties of the solution. In Section \ref{sec:SG} we first introduce some preliminary arguments concerning Stochastic Galerkin methods and then we detail the derivation of the novel micro-macro {sG} approach. Section \ref{sec:numerics} is devoted to present several numerical examples for different Fokker-Planck models including opinion formation and swarming dynamics that highlight the properties enjoyed by the proposed method in comparison with standard {sG} schemes. Finally, in Section \ref{sec:conc} we report some concluding considerations.

\section{Fokker-Planck type equations with random inputs}
In this section, we survey some examples of Fokker-Planck models with random inputs and the related equilibrium state solutions. First we introduce the classical Fokker-Planck equation with uncertainty in the diffusion coefficient. Next we discuss a kinetic swarming model where one additionally considers the space dependence. Finally, we focus on several recent examples of Fokker-Planck equations in the socio-economic sciences. A survey on some regularity results in the random space concludes the section.
We stress that the numerical approach we propose is not restricted to the above cases, even if the design of a suitable scheme for a different model may require special care and attention. 

\subsection{The classical Fokker-Planck equation}\label{sect:classic_FP}

The standard Fokker-Planck equation with uncertainty is obtained from \eqref{eq:MF_general}-\eqref{eq:J_meanfield} by assuming
\begin{equation}\label{eq:DP_FP}
{D(\z) = \sigma^2(\z)}, \qquad \textrm{and}\qquad P(\z,\v,\v_*) = 1
\end{equation}
which leads to 
\begin{equation}\label{eq:B_FP}
\mathcal B[f](\z,\v) = { \v-\u(\z)}, 
\end{equation}
where  
\be
\u(\z) = \int_{\RR^{d_v}} \v f(\z,\v,0)d\v,\qquad \sigma^2(\z)= \frac1{d_v}\int_{\RR^{d_v}} {|\v -\u(\z)|^2} f(\z,\v,0)d\v,
\ee
are the conserved uncertain momentum and temperature respectively.

The stationary distribution is characterized by a Maxwellian density defined as
\be
f^{\infty}(\z,\v) = \left(\dfrac{1}{2\pi \sigma(\z)^2}\right)^{d_\v/2} \exp\left\{ -\dfrac{|\v-\u(\z)|^2}{2\sigma^2(\z)} \right\}, 
\ee
where we assumed the total mass has been normalized to one.

The study of equilibrium solutions, complemented with boundary conditions on the velocity space and with a given initial distribution $f_0$, is of paramount importance in kinetic theory, see for instance^^>\cite{carrillo1998M2AS,toscani1999quarterly} and the recent works^^>\cite{AJW,furioli2017M3AS}. 
The trends to equilibrium in the space homogeneous case are then determined in terms of the Boltzmann H-functional  
\be
\mathbb H(f) = \int_{\mathbb R^{d_v}} f \log f d\v. 
\ee
In particular, if $f$ is solution to the space homogeneous initial value problem \eqref{eq:MF_general}-\eqref{eq:J_meanfield} with drift and diffusion given by \eqref{eq:DP_FP}-\eqref{eq:B_FP} and $f(\z,\v,0)$ has finite entropy, then $f(\z,\v,t)$ converges in relative entropy $\mathbb K(f) = \mathbb H(f)-\mathbb H(f^\infty)$ to the equilibrium $f^\infty(\z,\v)$ and
\be
\mathbb K(f(\z,\v,t)) \le e^{-2t/\sigma^2(\z)}\mathbb K(f(\z,\v,0)), \qquad t\ge 0,
\ee
meaning that the solution of the problem decays exponentially fast to equilibrium. Let observe anyway that, at variance with classical setting, here the rate of convergence depends on the uncertainty distribution. Regularity of solutions to this kind of problems has been studied for instance in^^>\cite{Lions} for constant diffusion matrices and in^^>\cite{Lions1} for irregular coefficients. 

\subsection{Kinetic swarming models with uncertainties}
\label{sec:swarming}
The next example concerns a kinetic equation for describing the swarming behavior^^>\cite{carrillo2010MSSET,CFRT,BCCD,CZ2019}. In particular we focus on a model with self--propulsion and uncertain diffusion, see^^>\cite{BarDeg,BCCD}. The dynamics for the density in the general case depends also on space $f=f(\z,\x,\v,t)$ and is described by the Vlasov-Fokker-Planck equation (\ref{eq:MF_general2}) characterized by 
\be
\begin{split}
	\label{eq:swarming_general1}
	\mathcal B[f](\z,\x,\v,t) = \alpha(\z) \v(1-|\v|^2)+(\v-\u_f(\v,\x,t)),
\end{split}
\ee
where 
\be
u_f(\z,\x,t) = \dfrac{\int_{\RR^{d_x}\times\RR^{d_v}}K(\x,\y)\v f(\z,\y,\v,t)\,d\v,d\y}{\int_{\RR^{d_x}\times \RR^{d_v}}K(\x,\y)f(\z,\y,\v,t)\,d\v\,d\y},
\ee
with $K(\x,\y)>0$ a localization kernel, $\alpha(\z)>0$ a self--propulsion term and $D(\z)>0$ the uncertain noise intensity. 

In the space-homogeneous case, i.e. $f=f(\z,\v,t)$, the model can be written as a gradient flow with uncertainties. Indeed, by defining
\[
\xi(\z,\v,t) = \Psi(\z,\v) + (U\ast f)(\z,\v,t) + D(\z)\log f(\z,\v,t),
\]
where $U(\v) = \frac{|\v|^2}{2}$ is a Coloumb potential and $\Psi(\z,\v)$ of the form 
\[
\Psi(\z,\v) = \alpha(\z) \left(\dfrac{|\v|^4}{4} - \dfrac{|\v|^2}{2} \right),
\] 
the space-homogeneous version reads
\be\label{eq:swarming_hom}
\frac{\partial}{\partial t} f(\z,\v,t) = \nabla_v \cdot \left( f(\z,\v,t) \nabla_v \xi(\z,\v,t) \right).
\ee
For the above equation it can be shown that the free energy functional $\mathcal E(\z,t)$ defined as
\[\begin{split}
\mathcal E(\z,t) =&\int_{\mathbb R^{d_v}} \left( \alpha(\z) \dfrac{|\v|^4}{4}+(1-\alpha(\z))\dfrac{|\v|^2}{2}\right)f(\z,\v,t)dv \\
&- \dfrac{|u_f(\z,t)|^2}{2} + D(\z) \int_{\mathbb R^{d_v}} f(\z,\v,t) \log f(\z,\v,t)dv,
\end{split}\]
where 
\[
u_f(\z,t) = \dfrac{\int_{\mathbb R^{d_v}}\v f(\z,\v,t)dv}{\int_{\mathbb{R}^{d_v}} f(\z,\v,t)dv},
\]
dissipates along solutions. The stationary solutions have consequently the form 
\be
f^{\infty}(\v,\z) = C\exp \left\{-\dfrac{1}{D(\z)}\left(\alpha(\z)\dfrac{|\v|^4}{4}+(1-\alpha(\z))\dfrac{|\v|^2}{2}-u_{f^{\infty}}(\z)\cdot \v \right) \right\},
\ee
with $C>0$ a normalization constant.

\subsection{Fokker-Planck models in the socio-economic sciences}\label{sect:FP_collective}
For different choices of the interaction functions \eqref{eq:Bf} and of non constant diffusion matrix $D(\z,\v)$, the Fokker-Planck model \eqref{eq:MF_general}-\eqref{eq:J_meanfield} is frequently used to describe different phenomena with respect to the classical physics which are related to the behavior of large systems of agents in social and life sciences^^>\cite{pareschi2013BOOK}. 
In the one dimensional case $v\in V \subseteq \RR$, which is the relevant situation in the case of opinion formation, wealth distributions and traffic dynamics, the local equilibrium distribution $f^\infty(z,v)$ of  problem \eqref{eq:MF_general}-\eqref{eq:J_meanfield} is given by the solution of the following differential equation
\be\label{eq:rel_inf_hom}
B(z,v)f^\infty(z,v)+\partial_v (D(z,v) f^{\infty}(z,v))=0,
\ee
where to simplify notations we considered $z\in I_{z} \subseteq \RR$.

Now, in order to enlighten the behavior of the Fokker-Planck equation with nonconstant diffusion we can study the monotone decreasing tendency of Lyapunov functionals of the solution as proposed in^^>\cite{furioli2017M3AS} in the deterministic setting. We concentrate on the case of relative Shannon entropy which is defined as 
\be\label{shannon}
H(f,g) = \int_V f\log \dfrac{f}{g}dv,
\ee
being $f,g$ two probability densities. Convergence to equilibrium can be proven by proving that the relative entropy is monotonically decreasing in time. We first need the following result 
\begin{prop}
Let us introduce the quotient $ F(z,v,t)=f(z,v,t)/f^{\infty}(z,v)$, therefore the evolution equation for $F(z,v,t)$ reads
\be\label{eq:Fhom}
\dfrac{\partial}{\partial t} F(z,v,t) = - B(z,v)\dfrac{\partial}{\partial v} F(z,v,t) + D(z,v)\dfrac{\partial^2}{\partial v^2} F(z,v,t) .  
\ee
\end{prop}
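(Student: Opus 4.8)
The plan is to substitute the factorization $f = F f^\infty$ directly into the space-homogeneous Fokker--Planck equation and to exploit the defining relation \eqref{eq:rel_inf_hom} for the equilibrium to collapse the drift and diffusion contributions. In one dimension and in the homogeneous regime, equation \eqref{eq:MF_general}--\eqref{eq:J_meanfield} reads $\partial_t f = \partial_v\big(B(z,v)f + \partial_v(D(z,v)f)\big)$, with $f^\infty(z,v)$ time-independent. Since $f^\infty$ does not depend on $t$, the left-hand side becomes simply $\partial_t f = f^\infty\,\partial_t F$, which already isolates the time derivative of the quotient we are interested in. It then remains to rewrite the right-hand side in terms of $F$.

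First I would expand the flux $J := B f + \partial_v(D f)$ after writing $f = F f^\infty$. By the Leibniz rule, $\partial_v(D f) = F\,\partial_v(D f^\infty) + D f^\infty\,\partial_v F$, so that $J = F\big(B f^\infty + \partial_v(D f^\infty)\big) + D f^\infty\,\partial_v F$. The key step is to recognize that the bracketed term is exactly the left-hand side of the equilibrium relation \eqref{eq:rel_inf_hom}, and hence vanishes identically, leaving the reduced flux $J = D f^\infty\,\partial_v F$. This cancellation is the heart of the argument: the equilibrium condition is precisely what is needed to eliminate the contributions that are of zeroth order in the derivatives of $F$.

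Next I would differentiate the reduced flux once more, obtaining $\partial_v J = \partial_v(D f^\infty)\,\partial_v F + D f^\infty\,\partial_v^2 F$, and apply \eqref{eq:rel_inf_hom} a second time in the equivalent form $\partial_v(D f^\infty) = -B f^\infty$. This converts $\partial_v J$ into $-B f^\infty\,\partial_v F + D f^\infty\,\partial_v^2 F$. Equating with $f^\infty\,\partial_t F$ and dividing through by $f^\infty$, which is legitimate wherever $f^\infty > 0$, yields exactly \eqref{eq:Fhom}.

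I expect the computation itself to be routine; the only points requiring care are that the equilibrium relation must be invoked \emph{twice}, once to reduce the flux and once to rewrite $\partial_v(D f^\infty)$, and that the positivity hypothesis $f^\infty > 0$ is needed for the final division, which holds for the Gibbs-type equilibria considered here. One should also note that the argument treats $B(z,v)$ as a prescribed function of $(z,v)$; were the drift genuinely nonlocal and solution-dependent, the cancellation through \eqref{eq:rel_inf_hom} would no longer be exact, so the statement is understood in the frozen-drift setting in which \eqref{eq:rel_inf_hom} defines $f^\infty$.
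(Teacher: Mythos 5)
Your proposal is correct and follows essentially the same route as the paper: both arguments use the equilibrium relation \eqref{eq:rel_inf_hom} twice, first to collapse the flux to the reduced form $D(z,v)f^{\infty}(z,v)\,\partial_v F$, and then, after differentiating, to rewrite $\partial_v(Df^{\infty}) = -Bf^{\infty}$ before dividing by $f^{\infty}$. The only difference is organizational: the paper reaches the reduced flux through logarithmic derivatives (writing $B/D = -\partial_v \log(Df^{\infty})$ and factoring $Df\,\partial_v\log(f/f^{\infty})$), whereas you substitute $f = Ff^{\infty}$ and apply the Leibniz rule directly, which is the same cancellation in slightly cleaner clothing.
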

\begin{proof}
The proof is reminiscent of the results in^^>\cite{furioli2017M3AS}. We sketch in the following the main points. From \eqref{eq:rel_inf_hom} we have
	\be
	\dfrac{B(z,v)}{D(z,v)} = -\dfrac{\partial}{\partial v} \log(D(z,v)f^{\infty}(z,v)).
	\ee
	Then the right hand side of \eqref{eq:MF_general} can be written in the one dimensional case as
	\begin{equation*}
	\begin{split}
	\dfrac{\partial}{\partial v} (D(z,v)f(z,v,t)) &+ B(z,v)f(z,v,t) =  D(z,v)f(z,v,t) \left[\dfrac{\partial}{\partial v} \log(D(z,v)f(z,v,t))+\dfrac{B(z,v)}{D(z,v)} \right] \\
	& = D(z,v)f(z,v,t) \left[ \dfrac{\partial}{\partial v} \log(D(z,v)f(z,v,t))-\dfrac{\partial}{\partial v} \log(D(z,v)f^{\infty}(z,v)) \right]\\
	& = D(z,v)f(z,v,t) \dfrac{\partial}{\partial v} \log\left(\dfrac{f(z,v,t)}{f^{\infty}(z,v)}\right).
	\end{split}
	\end{equation*}
	Hence, we have  
	\be
	\begin{split}
	\dfrac{\partial}{\partial t} f(z,v,t) &=  \dfrac{\partial}{\partial v} \left[ D(z,v)f(z,v,t) \dfrac{\partial}{\partial v} \log \dfrac{f(z,v,t)}{f^{\infty}(z,v)}\right] = \dfrac{\partial}{ \partial v} \left[ D(z,v)f^{\infty}(z,v)\dfrac{\partial}{\partial v} \dfrac{f(z,v,t)}{f^{\infty}(z,v)}\right],
	\end{split}
	\ee
	which leads to 
	\be
	f^{\infty}(z,v)\dfrac{\partial}{\partial t}F(z,v,t) = D(z,v)f^{\infty}(z,v)\dfrac{\partial^2}{\partial v^2} F(z,v,t)-B(z,v) f^{\infty}(z,v) \dfrac{\partial}{\partial v}F(z,v,t),
	\ee
and we conclude. 
\end{proof}
We can now prove the result below which will directly imply the time decay of the Shannon entropy. 
\begin{prop}\label{prop:FPhom}
Let $F = F(z,v,t)$ be the solution of \eqref{eq:Fhom} endowed with no-flux boundary conditions for all $z\in I_{z}$. Then, for each smooth function $\psi(v)$ such that $|\psi(v)|\le c<+\infty$ if $v\in\partial V$, $V\subset \RR$ or at infinity if $V=\RR$, the following holds
\be
\int_V f^{\infty}(z,v)\psi(v)\frac{\partial}{\partial t} F(z,v,t)dv = -\int_V D(z,v)f^{\infty}(z,v)\partial_v \psi(v)\partial_vF(z,v,t)dv
\ee
\end{prop}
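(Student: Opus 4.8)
The plan is to recast the evolution equation for $F$ in divergence (flux) form and then integrate by parts a single time against the test function $\psi$. First I would note that, since $f^\infty$ is time independent, multiplying \eqref{eq:Fhom} by $f^\infty(z,v)$ and using the local--equilibrium relation \eqref{eq:rel_inf_hom} collapses the drift--diffusion right--hand side into one flux. Concretely, \eqref{eq:rel_inf_hom} gives $\partial_v\big(D(z,v)f^\infty(z,v)\big)=-B(z,v)f^\infty(z,v)$, whence
\be
f^{\infty}(z,v)\frac{\partial}{\partial t} F(z,v,t)=\frac{\partial}{\partial v}\left[\,D(z,v)\,f^{\infty}(z,v)\,\frac{\partial}{\partial v} F(z,v,t)\right].
\ee
This is exactly the conservative form already obtained in the proof of the previous proposition, now read at the level of $F$.

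Next I would multiply this identity by $\psi(v)$, integrate over $V$, and integrate by parts once in $v$. The bulk term produced is precisely $-\int_V D(z,v)f^\infty(z,v)\,\partial_v\psi(v)\,\partial_v F(z,v,t)\,dv$, which is the desired right--hand side, so the whole claim reduces to showing that the accompanying boundary contribution
\be
\Big[\,\psi(v)\,D(z,v)\,f^{\infty}(z,v)\,\partial_v F(z,v,t)\,\Big]_{\partial V}
\ee
vanishes.

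The vanishing of this boundary term is the only delicate point and the main obstacle. I would argue it directly from the hypotheses: the quantity $D f^\infty\,\partial_v F$ is exactly the equilibrium flux appearing inside the divergence above, and the no--flux boundary conditions imposed on $F$ set this flux to zero on $\partial V$; in the bounded case this is reinforced by the assumption that $D$ vanishes on $\partial V$, while for $V=\RR$ it follows from the decay of $D f^\infty$ (hence of the diffusive flux) at infinity. Since $\psi$ is assumed bounded there, $|\psi(v)|\le c<+\infty$, the product stays controlled and the boundary term drops, giving the stated identity. The remaining ingredients are purely routine: one only needs $F$, $\partial_v F$ and $D f^\infty$ to be regular enough for the single integration by parts to be legitimate, which is guaranteed by the standing regularity assumptions on the solution recalled earlier in this section.
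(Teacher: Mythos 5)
Your proof is correct and follows essentially the same route as the paper: both rewrite the right-hand side of \eqref{eq:Fhom}, via the equilibrium relation \eqref{eq:rel_inf_hom}, as the single flux $\partial_v\big(D\,f^{\infty}\,\partial_v F\big)$ (exactly the conservative form obtained in the preceding proposition), then integrate by parts once against $\psi$ and kill the boundary term using the no-flux condition $D\,f^{\infty}\,\partial_v F\big|_{\partial V}=0$ together with the boundedness of $\psi$. The only difference is presentational: you make the divergence-form step and the boundary-term analysis explicit, whereas the paper states them tersely and defers details to the cited reference.
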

\begin{proof}
Without loss of generalization, we restrict to $V\subset \RR$. The no-flux boundary conditions imply conservation of mass and read
 \be\left. D(z,v)f^{\infty}(z,v)\dfrac{\partial}{\partial v} \dfrac{f(z,v,t)}{f^{\infty}(z,v)}\right\vert_{v\in\partial V}=0,\ee
then
\[
\int_V f^{\infty}(z,v)\psi(v)\frac{\partial}{\partial t} F(z,v,t)dv = \int_V f^{\infty}(z,v)\psi(v) \left(D(z,v)\dfrac{\partial^2}{\partial v^2} F(z,v,t) - B(z,v)\dfrac{\partial}{\partial v} F(z,v,t)\right)dv.
\]
Thus, integrating by part and using the boundary conditions leads to the claimed result, see^^>\cite{furioli2017M3AS} for details. 
\end{proof}
Using the above propositions, the decay of entropy is stated by the subsequent theorem.
\begin{thm}\label{thm1}
Let $\phi$ be a strictly convex function. Then, if $F= F(z,v,t)$, $v\in V\subset \RR$,  is a bounded solution to equation \eqref{eq:Fhom} the functional 
\be
\mathcal H[F]  (z,t)= \int_V f^{\infty}(z,v)\phi(F(z,v,t))dv,
\ee
is monotonically decreasing in time for all $z \in I_{z}$ and the following inequality holds 
\be
\dfrac{d}{dt} \mathcal H[F](z,t) = - I_{\mathcal H}(z,t), 
\ee
being 
\be
I_{\mathcal H}(z,t) = \int_V D(z,v) f^{\infty}(z,v) \phi^{\prime\prime}(F)\Big| \partial_v F(z,v,t)\Big|^2dv>0.
\ee
\end{thm}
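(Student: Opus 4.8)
The plan is to differentiate $\mathcal H[F]$ in time and recognize the resulting integral as a direct instance of Proposition \ref{prop:FPhom}. First I would interchange the time derivative with the $v$-integration, which is legitimate under the boundedness and regularity assumed on $F$, to obtain
\be
\dfrac{d}{dt}\mathcal H[F](z,t) = \int_V f^{\infty}(z,v)\,\phi'(F(z,v,t))\,\dfrac{\partial}{\partial t}F(z,v,t)\,dv.
\ee

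The key step is to apply Proposition \ref{prop:FPhom} with the test function $\psi(v)=\phi'(F(z,v,t))$, treating $z$ and $t$ as fixed parameters. To do so I must verify that this $\psi$ meets the hypotheses of that proposition: since $\phi$ is smooth (the appearance of $\phi''$ in the statement forces $\phi\in C^2$) and $F$ is a bounded solution, $\phi'(F)$ is bounded on $V$, and in particular $|\psi|\le c$ holds where the no-flux condition is imposed. With this choice Proposition \ref{prop:FPhom} gives
\be
\dfrac{d}{dt}\mathcal H[F](z,t) = -\int_V D(z,v)f^{\infty}(z,v)\,\partial_v\big(\phi'(F(z,v,t))\big)\,\partial_v F(z,v,t)\,dv.
\ee

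Next I would apply the chain rule $\partial_v\phi'(F)=\phi''(F)\,\partial_v F$, which rewrites the integrand as $D\,f^{\infty}\,\phi''(F)\,(\partial_v F)^2$ and yields exactly
\be
\dfrac{d}{dt}\mathcal H[F](z,t) = -\int_V D(z,v)f^{\infty}(z,v)\,\phi''(F(z,v,t))\,\big|\partial_v F(z,v,t)\big|^2\,dv = -I_{\mathcal H}(z,t),
\ee
as asserted. Monotone decay then follows from the sign of $I_{\mathcal H}$: strict convexity of $\phi$ gives $\phi''>0$, while $D(z,v)\ge 0$, $f^{\infty}(z,v)>0$ and $|\partial_v F|^2\ge 0$ make the integrand nonnegative for each fixed $z\in I_{z}$, so that $I_{\mathcal H}(z,t)\ge 0$ and hence $\mathcal H[F]$ is nonincreasing in time.

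I expect the main obstacle to be twofold. The routine parts are the chain-rule manipulation and the sign analysis; the delicate points are (i) justifying the boundary and decay behaviour so that Proposition \ref{prop:FPhom} genuinely applies to $\psi=\phi'(F)$, which relies on the boundedness of $F$ to control $\phi'(F)$ at $\partial V$ (or at infinity when $V=\RR$), and (ii) the strict positivity $I_{\mathcal H}>0$ claimed in the statement. Concerning the latter, the integral vanishes precisely when $\partial_v F\equiv 0$, i.e. when $f$ is proportional to the equilibrium $f^{\infty}$; away from the stationary state the inequality is strict, which is the sense in which $\mathcal H[F]$ strictly decreases until equilibrium is reached.
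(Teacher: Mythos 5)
Your proposal is correct and follows essentially the same route as the paper: the paper's proof likewise applies Proposition \ref{prop:FPhom} with the choice $\psi(v)=\phi'(F)$, the chain rule and strict convexity then giving the dissipation term $I_{\mathcal H}$. You simply make explicit the steps the paper leaves implicit (differentiation under the integral, the boundedness of $\phi'(F)$ needed for the no-flux boundary terms, and the sign discussion, including the correct caveat that strict positivity of $I_{\mathcal H}$ only holds away from equilibrium).
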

\begin{proof}
We observe that for all $z\in I_{z}$ the functional $\mathcal H[\cdot]$ is uniformly bounded. Hence we apply Proposition \ref{prop:FPhom} with $\psi(v)= \phi^{\prime}(F)$. Then, the Shannon entropy decay, i.e. equation \eqref{shannon}, is obtained by taking $\phi(x)=\log x$.
\end{proof}

\subsubsection{Examples: opinion formation, traffic flows and wealth distributions}
\label{sec:examples}
We now focus on some Fokker-Planck applications having the structure 
\be
\frac{\partial}{\partial t} f(z,v,t) = \partial_v \left[ B(z,v) f(z,v,t) + \partial_v (D(z,v)f(z,v,t)) \right], \qquad v\in V\subseteq \RR.
\label{eq:FPs}
\ee
The large time behavior is identified once the form of the interaction function and of the local diffusion function is fixed. 

One example is the opinion formation model studied in^^>\cite{Tosc} where $v$ represents the opinion values with $V=[-1,1]$. For this model one has 
\begin{equation}
\label{eq:BD_opinion}
B(z,v) = \gamma(z)(v-u),\qquad D(z,v) = \dfrac{\sigma^2(z)}{2}(1-v^2)^2,
\end{equation}
 leading to the following equilibrium state
\be
f^\infty(z,v) = \dfrac{C_{z,u}}{(1-v^2)^2} \left( 1+v \right)^{\frac{\gamma(z)u}{2\sigma^2(z)}} \left( 1-v \right)^{\frac{\gamma(z)u}{2\sigma^2(z)}} \exp\left\{ -\dfrac{\gamma(z)(1-uv)}{\sigma^2(z)(1-v^2)} \right\}.
\label{eq:exact_opinion}
\ee
It is also possible to show that the mean opinion $u=\int_{I}vf(z,v,t)dv$ is preserved in time.

Instead, by taking the same drift term of \eqref{eq:BD_opinion} but local diffusion $D(z,v) = \frac{\sigma^2(z)}{2}(1-v^2)$ one obtains a beta equilibrium distribution 
\be
f^\infty(z,v) = \dfrac{(1+v)^{\frac{1+u}{\lambda(z)}-1}(1-v)^{\frac{1-u}{\lambda(z)}-1}}{2^{\frac{2}{\lambda(z)}-1}\mathbb{B}\left( \frac{1+u}{\lambda(z)},\frac{1-u}{\lambda(z)} \right)}, \qquad \lambda(z) = \dfrac{\sigma^2(z)}{\gamma(z)},
\label{eq:beta}
\ee
where $\mathbb B(\cdot,\cdot)$ is the Beta function. Besides opinion formation^^>\cite{PTTZ}, the above equilibrium state has been recently used, for instance, in traffic flows^^>\cite{dimarcotosin}. 

Related models which employ similar choices for the drift and diffusion terms regard the description of wealth distribution, see e.g.^^>\cite{pareschitoscani}. In this case, one considers $v\in\mathbb R_+$, a drift function like in \eqref{eq:BD_opinion} and local diffusion $D(z,v) = \frac{\sigma^2(z)}{2}v^2$. These choices produce an inverse gamma equilibrium
\be
f^\infty(z,v) = \dfrac{(\mu(z)-1)^{\mu(z)}}{\Gamma(\mu(z))v^{1+\mu(z)}}\exp\left\{-\dfrac{\mu(z)-1}{v}\right\},
\label{eq:wealth}
\ee 
begin $\mu(z) = 1+2\gamma(z)/\sigma^2(z)$ the Pareto exponent and $\Gamma(\cdot)$ the Gamma function.

\subsubsection{Quasi-equilibrium distributions}
In the general case, however, a clear analytical insight on the large time behavior of the system is often lost together with sharp trends to equilibrium. One way to tackle the problem consists in considering the solution of the vanishing flux condition in \eqref{eq:FPs} which lead to
\[
D(v,z)\partial_v f(z,v,t) = {-}(\mathcal B[f](z,v,t)+ \partial_v D(z,v)) f(z,v,t),\qquad {D(v,\cdot)>0,}
\]
which in general is not solvable except in special cases as the ones illustrated before. From the above identity, assuming without loss of generality $V=\RR$ and integrating we can write
\be\label{eq:quasi_eq}
f^q(z,v,t) = C_{z}\exp\left\{ - \int_{-\infty}^v \dfrac{\mathcal B[f](z,v_*,t) + \partial_v D(z,v_*)}{D(z,v_*)}dv_* \right\},
\ee
being $C_z>0$ a normalization constant. Therefore, unlike the previous cases, here $f^q$ is not the global equilibrium of the problem, but it shares with that steady state the property of annihilating the flux for any time $t\ge0$. It can be seen in particular that, if $\mathcal B[\cdot]$ does not depend on time, the quasi-equilibrium state \eqref{eq:quasi_eq} coincides with the steady state distribution of the corresponding Fokker-Planck equation, and so, it can be seen as a generalization of the steady state solutions we have illustrated before. As we will see the notion of quasi-equilibrium state will be essential for the derivation of numerical schemes which are capable to capture correctly the long time behavior of the system. 
For further insights on the use of quasi steady-state solutions to design numerical methods we refer to^^>\cite{CC,PZ2018,Pareschi2017,Filbet,Kuramoto}.

\begin{rem}
An important aspect should be noted in the dependence of equilibrium (and quasi-equilibrium) states on uncertainty. In some cases the choice of stochastic parameters can lead to a deterministic stationary behavior. For example, this is the case when $\gamma(z)=C\sigma_2(z)$, with $C>0$ constant, in \eqref{eq:exact_opinion}, \eqref{eq:beta} and \eqref{eq:wealth}. In such a situation, in fact, the uncertainty can be factorized externally to the Fokker-Planck operator and involves the relaxation time but not the steady state. Such cases are critical in terms of accuracy in the random space because the uncertainty decreases with time until it vanishes. We will discuss this issue in more detail in Section \ref{sec:numerics} with the aid of some numerical examples. We refer to^^>\cite{GSVK} for similar phenomena in uncertainty quantification through gPC approaches. 
\end{rem}

\subsection{Regularity of the solution in the random space}
We conclude the section by recalling some regularity results for the linear and non-linear Fokker-Planck model with respect to the random vector $\z\in \RR^{d_{z}}$. We fix $p(\z): I_\z \rightarrow \mathbb R_+$ the probability density of the random variable $\z$ which is supposed to be known. We also go back to the general case of $\v\in\RR_{d_v}$ and we define the weighted norm in $L^2_p(V\times I_\z)$ as follows
\be
\|f(t) \|_{L^2_p(V\times I_\z)} = \left(\int_{V}\int_{I_\z} |f(t,\v,\z)|^2 p(\z)d\z d\v \right)^{1/2}.
\ee
Sufficient conditions to guarantee regularity of the solution of the general Fokker-Planck model  \eqref{eq:MF_general}-\eqref{eq:J_meanfield} are discussed below (see also^^>\cite{zhu2018SIAM,zhu2017MMS,Liao,LW} {for related results}). 
We start from the classical Fokker-Planck equation with uncertain initial condition, see Section \ref{sect:classic_FP}, we have:
\begin{thm}
Let $f(\z,\v,t)$, $\v \in \RR^{d_v}$, $\z\in I_{\z}$, be the solution to
\begin{equation}
\label{eq:th1}
\frac{\partial}{\partial t} f(\z,\v,t) = \nabla_\v \cdot \left[ (\v-\u)f(\z,\v,t) + \dfrac{\sigma^2}{2}\nabla_\v f(\z,\v,t) \right]. 
\end{equation}
and let us define the space $H_p^m=\{f \in L^2_p(\mathbb R^{d_v} \times I_{\z}): \partial_{\z}^\ell f \in L^2_p(\mathbb R^{d_v} \times I_{\z}), \ \ell = 0,\dots,m\} $. If $ \partial_{\z}^\ell f(\z,\v,0) \in L^2_p(\RR^{d_v} \times I_{\z})$, $\ell = 0,\dots,m$,  then $f(\z,\v,t) \in H_p^m$ for all $t\ge 0$.
\end{thm}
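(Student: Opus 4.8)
The plan is to derive an a priori energy estimate for the $\z$-derivatives of $f$ and then close it with Gronwall's lemma, arguing by induction on the order $\ell\le m$ (the manipulations below presuppose enough smoothness and decay of $f$, which can be justified on a Galerkin or parabolic-regularized approximation and passed to the limit). Since $\u=\u(\z)$ and $\sigma^2=\sigma^2(\z)$ are the velocity moments of the uncertain initial datum, I would first record the standing assumptions that make the estimate work: $\z\mapsto\u(\z)$ and $\z\mapsto\sigma^2(\z)$ are $m$ times differentiable with bounded derivatives on $I_\z$, and the diffusion is non-degenerate, $\sigma^2(\z)\ge\sigma_0^2>0$ uniformly. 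This lower bound is what lets the diffusive dissipation dominate the source terms that appear after differentiating in $\z$.

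First I would differentiate \eqref{eq:th1} $\ell$ times in $\z$. Setting $f_\ell:=\partial_\z^\ell f$ and applying the Leibniz rule, $f_\ell$ solves the same Ornstein--Uhlenbeck operator plus a source assembled from lower-order derivatives,
\be
\partial_t f_\ell = \nabla_\v\cdot\Big[(\v-\u)f_\ell+\tfrac{\sigma^2}{2}\nabla_\v f_\ell\Big]+\nabla_\v\cdot\Phi_\ell ,
\ee
\be
\Phi_\ell=-\sum_{j=1}^{\ell}\binom{\ell}{j}(\partial_\z^j\u)\,\partial_\z^{\ell-j}f+\sum_{j=1}^{\ell}\binom{\ell}{j}\tfrac{\partial_\z^j\sigma^2}{2}\,\nabla_\v\partial_\z^{\ell-j}f .
\ee
Because $\u$ and $\sigma^2$ do not depend on $\v$, the leading operator is \emph{exactly} the one in \eqref{eq:th1}, and all the $\z$-dependence of the coefficients is collected in $\Phi_\ell$, which only involves $\partial_\z^k f$ and $\nabla_\v\partial_\z^k f$ with $k\le\ell-1$.

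Next I would multiply by $f_\ell$, integrate over $\v\in\RR^{d_v}$ and over $I_\z$ against $p(\z)$, and integrate by parts in $\v$, the decay at infinity killing boundary terms. For the principal part, the identity $\int(\v-\u)f_\ell\cdot\nabla_\v f_\ell\,d\v=-\tfrac{d_v}{2}\int f_\ell^2\,d\v$ yields a \emph{production} term $+\tfrac{d_v}{2}\|f_\ell\|_{L^2_p}^2$ and the \emph{good dissipation} $-\int_{I_\z}\!\int_{\RR^{d_v}}\tfrac{\sigma^2}{2}|\nabla_\v f_\ell|^2\,p\,d\v\,d\z$. For the source I would write $\int f_\ell\,\nabla_\v\cdot\Phi_\ell\,d\v=-\int\nabla_\v f_\ell\cdot\Phi_\ell\,d\v$ and apply Young's inequality with a small parameter $\varepsilon$: the drift pieces are dominated by $\tfrac{\varepsilon\sigma^2}{2}|\nabla_\v f_\ell|^2+C_\varepsilon|\partial_\z^{\ell-j}f|^2$, while the diffusion pieces, after estimating $(\partial_\z^j\sigma^2)^2/\sigma^2=\big((\partial_\z^j\sigma^2)/\sigma^2\big)^2\sigma^2\le C\,\sigma^2$ (here the bound $\sigma^2\ge\sigma_0^2$ enters), are dominated by $\tfrac{\varepsilon\sigma^2}{2}|\nabla_\v f_\ell|^2+C\,\sigma^2|\nabla_\v\partial_\z^{\ell-j}f|^2$. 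Taking $\varepsilon$ small absorbs every $|\nabla_\v f_\ell|^2$ remainder into the good dissipation.

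The main obstacle, and the reason the induction must propagate the dissipation together with the energy, is precisely the diffusion source: it ties the order-$\ell$ estimate to the \emph{dissipation} $\int\sigma^2|\nabla_\v\partial_\z^k f|^2$ of strictly lower orders $k\le\ell-1$, not merely to their $L^2_p$ norms. I would therefore phrase the induction hypothesis so that, for every $k\le\ell-1$ and every $T>0$, both $\partial_\z^k f(t)\in L^2_p$ for $t\in[0,T]$ and $\int_0^T\!\int_{I_\z}\!\int_{\RR^{d_v}}\sigma^2|\nabla_\v\partial_\z^k f|^2\,p\,d\v\,d\z\,dt<\infty$. The base case $\ell=0$ carries no source and gives $\|f(t)\|_{L^2_p}^2\le e^{d_v t}\|f(0)\|_{L^2_p}^2$, and integrating the energy identity in time makes the order-zero dissipation integral finite. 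Granting the hypothesis, the order-$\ell$ inequality takes the form $\tfrac{d}{dt}\|f_\ell\|_{L^2_p}^2\le d_v\|f_\ell\|_{L^2_p}^2+R_\ell(t)$ with $\int_0^T R_\ell\,dt<\infty$, so Gronwall's lemma gives $\|f_\ell(t)\|_{L^2_p}^2\le e^{d_v t}\big(\|f_\ell(0)\|_{L^2_p}^2+\int_0^t R_\ell\,ds\big)<\infty$ for all $t\ge0$, and a further integration in time bounds the order-$\ell$ dissipation, closing the induction. Summing over $\ell=0,\dots,m$ gives $f(t)\in H_p^m$ for all $t\ge0$. Finally, in the degenerate-uncertainty case where only the initial datum is random while $\u$ and $\sigma^2$ are deterministic, one has $\Phi_\ell\equiv0$ and the whole argument collapses to applying the base-case estimate to each $\partial_\z^\ell f$ individually.
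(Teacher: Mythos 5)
Your proof is correct, and its engine --- an $L^2_p$ energy estimate for each $\partial_{\z}^\ell f$, integration by parts in $\v$, dropping (or exploiting) the negative diffusion term, and Gronwall --- is the same one the paper uses. The difference is in scope. The paper reads \eqref{eq:th1} literally: the uncertainty enters only through the initial datum, so $\u$ and $\sigma^2$ are deterministic, $\partial_{\z}^\ell$ commutes with the whole Fokker--Planck operator, and each $\partial_{\z}^\ell f$ solves the \emph{same} equation; a single estimate per $\ell$, with no coupling between orders, no Leibniz sources and no induction, finishes the proof. That is exactly your closing ``degenerate-uncertainty'' remark. You instead treat the more general (and physically more natural, given Section 2.1) situation in which $\u(\z)$ and $\sigma^2(\z)$ are moments of the uncertain initial datum: the Leibniz commutators $\Phi_\ell$ then couple order $\ell$ to lower orders, and your induction propagating the dissipation integrals $\int_0^T\!\int \sigma^2|\nabla_\v\partial_{\z}^k f|^2$ alongside the energies is genuinely necessary there, since the diffusion sources are controlled by lower-order \emph{dissipation} and not by lower-order $L^2_p$ norms alone. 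What each approach buys: the paper's proof is short and matches its stated setting, but its manipulation (pulling $(v_i-u_i)$ and $\sigma^2$ out of $\partial_{\z}^\ell$) would be wrong for $\z$-dependent coefficients; your argument covers that case, at the price of standing hypotheses not in the statement ($m$-fold differentiability of the coefficients with bounded derivatives, and uniform ellipticity $\sigma^2\ge\sigma_0^2>0$), which are unavoidable in that generality. The only cosmetic discrepancy is the Gronwall rate ($e^{d_v t}$ in your estimate versus the paper's $e^{t/2}$, the latter reflecting the paper's looser bookkeeping of the drift production term); neither affects the conclusion.
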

\begin{proof}
We apply $\partial_{\z}^\ell$ to \eqref{eq:th1}  and we multiply the resulting equation by $2p(\z)\partial_{\z}^\ell f(\z,\v,t)$, $\ell = 0,\dots,m$. Integrating then over $\RR^{d_v} \times I_{\z}$ we have
\be
\begin{split}
&\frac{\partial}{\partial t} \|\partial_{\z}^\ell f(t) \|_{L^2_p(\RR^{d_v} \times I_{\z})} \\
&\quad= \int_{\RR^{d_v}  \times I_{\z}} \sum_{i=1}^{d_v} 2p(\z) \partial_{\z}^{\ell} f(\z,\v,t) \partial_{v_i}\left[(v_i-u_i)\partial_{\z}^{\ell}f(\z,\v,t) + \dfrac{\sigma^2}{2} \partial_{v_i} \partial_{\z}^{\ell}f(\z,\v,t) \right] d\v d\z. 
\end{split}\ee
We get
\be
\begin{split}
&\int_{\RR^{d_v} \times I_{\z}} \sum_{i=1}^{d_v}2p(\z) \partial_{\z}^{\ell} f(\z,\v,t) \partial_{v_i}\left[(v_i-u_i)\partial_{\z}^{\ell}f(\z,\v,t) + \dfrac{\sigma^2}{2} \partial_{v_i} \partial_{\z}^{\ell} f(\z,\v,t) \right] d\z\, d\v\\
&=  \int_{\RR^{d_v} \times I_{\z}} \sum_{i=1}^{d_v} 2p(\z) \left(\partial_{\z}^{\ell}f^2(\z,\v,t) + (v_i-u_i)\partial_{\z}^{\ell}f(\z,\v,t)\partial_{v_i}\partial_{\z}^{\ell} f(\z,\v,t) \right)d\z\, d\v \\
&\quad+ \sigma^2 \int_{\RR^{d_v} \times I_{\z}}\sum_{i=1}^{d_v} p(\z) \partial_{\z}^{\ell} f(\z,v,t)\partial_{v_i}^2 \partial_{\z}^{\ell} f(\z,\v,t)d\z\, d\v \\
&= \|\partial_{\z}^\ell f(t)\|_{L_p^2(\RR^{d_v} \times I_{\z})}^2 -  \int_{\RR^{d_v} \times I_{\z}} \sum_{i=1}^{d_v} 2p(\z) \partial_{\z}^{\ell} f(\z,\v,t)\partial_{v_i}( (v_i-u_i)\partial_{\z}^{\ell}f(\z,\v,t))d\z\,d\v \\
&\quad-  \sigma^2 \int_{\RR^{d_v}\times I_{\z}}\sum_{i=1}^{d_v} p(\z) \left(\partial_{v_i}  \partial_{\z}^{\ell} f(\z,\v,t)\right)^2d\z\,d\v. 
\end{split}
\ee
From which we obtain 
\be
\frac{\partial}{\partial t} \| \partial_{\z}^{\ell}f(t)\|^2_{L^2_p(\RR^{d_v} \times I_{\z})} \le \dfrac{1}{2} \| \partial_{\z}^{\ell}f \|_{L^2_p(\RR^{d_v} \times I_{\z})}^2
\ee
and thanks to the Gronwall inequality we have for all $\ell = 0,\dots,m$
\be\label{grn}
\| \partial_{\z}^\ell f(t) \|^2_{L^2_p(\RR \times I_{\z})} \le e^{t/2 } \| \partial_{\z}^\ell f(0) \|_{L^2_p(\RR \times I_{\z})}^2, 
\ee
from which we conclude. 
\end{proof}

For the more general case of the Fokker-Planck equations with nonlocal terms we instead have
\begin{thm}
Let $f(\z,\v,t)$, $\v \in \RR^{d_v}$, $\z \in I_{\z}$, be the solution of \eqref{eq:MF_general}-\eqref{eq:J_meanfield}. We suppose that $\partial_{\z}^{\ell} f(\z,\v,t) \in L^2_p(\RR^{d_v} \times I_{\z})$, $\ell = 0,\dots,m$ and that that constants $c_{\mathcal B,1}, c_{\mathcal B,2} <+\infty$, $c_{D}<+\infty$ exist such that 
\be
\|\partial_{\z}^k \mathcal B[f](\z,\v,t)  \|_{L^\infty(\mathbb R^{d_v} \times I_{\z})}\le c_{\mathcal B,1}, \qquad \|\partial_{\z}^k \partial_{v_i} \mathcal B[f](\z,\v,t)  \|_{L^\infty(\mathbb R^{d_v}\times I_{\z})}\le c_{\mathcal B,2}
\ee
for all $k = 0,\dots,m$  and $\| D(\z,v) \|_{L^{\infty}(\RR^{d_v} \times I_{\z})}<c_D$. If $f(\z,\v,t=0) \in H^{m}_p$ then $f(\z,\v,t) \in H^{m}_p$ for all $t>0$.  
\end{thm}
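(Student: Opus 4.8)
The plan is to reproduce the energy estimate used for the previous theorem, the new feature being that $\mathcal B[f]$ and $D$ now depend on both $\z$ and $\v$, so that $\partial_{\z}^{\ell}$ no longer commutes with the flux and a genuine coupling between the orders $\ell=0,\dots,m$ appears. First I would apply $\partial_{\z}^{\ell}$ to \eqref{eq:MF_general}--\eqref{eq:J_meanfield} for each $\ell=0,\dots,m$ and expand the differentiated products by the Leibniz rule, writing $\partial_{\z}^{\ell}(\mathcal B[f]f)=\sum_{k=0}^{\ell}\binom{\ell}{k}(\partial_{\z}^{k}\mathcal B[f])(\partial_{\z}^{\ell-k}f)$ and likewise for $\partial_{\z}^{\ell}(Df)$. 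This isolates a diagonal part ($k=0$), which has the same structure as the linear equation treated before, from the off-diagonal cross terms ($k\ge 1$), in which lower-order $\z$-derivatives of $f$ are multiplied by $\z$-derivatives of the coefficients.

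Next I would multiply by $2p(\z)\partial_{\z}^{\ell}f$ and integrate over $\RR^{d_v}\times I_{\z}$, so that the left-hand side becomes $\frac{d}{dt}\|\partial_{\z}^{\ell}f(t)\|_{L^2_p}^2$, and integrate by parts in $\v$, the boundary contributions vanishing because $f$ and $D$ decay at infinity. The top-order diffusion term produces the nonpositive quantity $-2\int p\,D\,|\nabla_v\partial_{\z}^{\ell}f|^2\,d\v\,d\z$, which is the source of velocity-gradient control and plays the same dissipative role as $-\sigma^2\int p\,(\partial_{v_i}\partial_{\z}^{\ell}f)^2$ in the constant-coefficient case. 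The diagonal drift term integrates by parts into $-\int p\,(\nabla_v\cdot\mathcal B[f])(\partial_{\z}^{\ell}f)^2$, which is bounded by $c_{\mathcal B,2}\|\partial_{\z}^{\ell}f\|_{L^2_p}^2$; this is precisely why a uniform bound on $\partial_{\z}^{k}\partial_{v_i}\mathcal B$ is assumed.

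The remaining terms are handled by Cauchy--Schwarz and Young's inequality. Each off-diagonal drift term $\int 2p\,\partial_{\z}^{\ell}f\,\partial_{v_i}\big((\partial_{\z}^{k}\mathcal B_i)(\partial_{\z}^{\ell-k}f)\big)$, together with the terms coming from $\nabla_v D$ and from the differentiation of $Df$, is estimated through the $L^\infty$ bounds $c_{\mathcal B,1},c_{\mathcal B,2},c_D$ (the $\z$-derivatives of $D$, where present, being treated in the same way); after a further integration by parts any factor $\nabla_v\partial_{\z}^{\ell}f$ is absorbed, for a small $\e$, into the diffusive dissipation, leaving a remainder controlled by $C\sum_{\ell'\le\ell}\|\partial_{\z}^{\ell'}f\|_{L^2_p}^2$. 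Summing the resulting inequalities over $\ell=0,\dots,m$ and setting $\|f\|_{H^m_p}^2=\sum_{\ell=0}^m\|\partial_{\z}^{\ell}f\|_{L^2_p}^2$ then gives $\frac{d}{dt}\|f(t)\|_{H^m_p}^2\le C\,\|f(t)\|_{H^m_p}^2$, the dissipative terms being discarded by their sign. Gronwall's inequality finally yields $\|f(t)\|_{H^m_p}^2\le e^{Ct}\|f(0)\|_{H^m_p}^2<+\infty$ for every $t>0$, which is the claimed propagation of regularity.

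The main obstacle is the control of the cross terms generated by the nonlocal nonlinearity: unlike the constant-coefficient situation, the estimate for $\partial_{\z}^{\ell}f$ is coupled to all the lower orders $\partial_{\z}^{\ell'}f$, $\ell'<\ell$, and the genuinely delicate point is that some of these cross terms carry a velocity derivative on the lower-order factor. Closing the estimate therefore relies on using the diffusive dissipation to absorb the top-order velocity gradients and on the uniform bounds on $\partial_{\z}^{k}\mathcal B$ and $\partial_{\z}^{k}\partial_{v_i}\mathcal B$; the $\v$-dependence of $D$ additionally forces one to keep track of the $\nabla_v D$ terms, which must be dominated by the same dissipative contribution.
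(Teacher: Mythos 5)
Your overall skeleton---apply $\partial_{\z}^{\ell}$ to the equation, multiply by $2p(\z)\partial_{\z}^{\ell}f$, integrate over $\RR^{d_v}\times I_{\z}$, expand by the Leibniz rule, invoke the $L^\infty$ bounds $c_{\mathcal B,1},c_{\mathcal B,2}$, and close with Gronwall---is exactly the paper's. The genuine gap lies in the mechanism you propose for closing the cross-term estimates. You bound terms of the type $\int 2p\,(\partial_{\z}^{k}\mathcal B[f])(\partial_{\z}^{\ell-k}f)\,\partial_{v_i}\partial_{\z}^{\ell}f\,d\z\,d\v$ by Young's inequality and claim the resulting $\e\int p\,|\nabla_v\partial_{\z}^{\ell}f|^2$ can be absorbed into the dissipation $-2\int p\,D\,|\nabla_v\partial_{\z}^{\ell}f|^2$. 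That absorption requires a uniform lower bound $D\ge d_0>0$. The hypotheses only give the upper bound $\|D\|_{L^\infty}<c_D$, and in the very models this theorem is meant to cover (Section 2.3, e.g.\ $D(z,v)=\frac{\sigma^2(z)}{2}(1-v^2)^2$, or the setup of the paper where $D$ vanishes on $\partial V$ or at infinity) no such $d_0$ exists, so the weighted dissipation cannot dominate an unweighted gradient term. The alternative---not integrating by parts and estimating $\|\partial_{\z}^{\ell-k}\partial_{v_i}f\|_{L^2_p}$ directly---also fails, because mixed $(\z,v)$-derivatives are not controlled by the $H^m_p$ norm you propagate, so the Gronwall loop does not close as written. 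The same objection applies to your parenthetical treatment of the Leibniz terms $\partial_{\z}^{k}D$, $k\ge 1$: the statement assumes no bounds whatsoever on $\z$-derivatives of $D$.

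For comparison, the paper's proof never needs a lower bound on $D$ or bounds on $\partial_{\z}^k D$: it bounds the full Leibniz sums pointwise by their top-order terms, $c_{\mathcal B,1}\,\partial_{\z}^{\ell}\partial_{v_i}f$ and $c_{\mathcal B,2}\,\partial_{\z}^{\ell}f$, so that the only surviving drift contribution carrying a top-order velocity derivative is $c_{\mathcal B,1}\int 2p\,\partial_{\z}^{\ell}f\,\partial_{v_i}\partial_{\z}^{\ell}f\,d\z\,d\v$, which is an exact $v$-derivative of $(\partial_{\z}^{\ell}f)^2$ and vanishes by the decay/boundary conditions; the diffusion contribution is then simply discarded by its sign. (One may object that collapsing the Leibniz sum to its top-order term is itself a rough step, but it is how the paper's argument closes without ellipticity.) If you wish to retain your more scrupulous bookkeeping of all cross terms, you must either add the hypothesis of uniform ellipticity $D\ge d_0>0$ or enlarge the propagated norm to include mixed derivatives $\partial_{v}\partial_{\z}^{\ell}f$---neither of which is available in the statement being proved.
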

\begin{proof}
Applying $\partial_{\z}^{\ell}$ to \eqref{eq:MF_general} we get 
\be
\partial_{\z}^{\ell} \frac{\partial}{\partial t} f(\z,\v,t) = \sum_{i=1}^{d_v} \partial_{\z}^\ell \partial_{v_i} \left[ \mathcal B[f](\z,\v,t)f(\z,\v,t) + \partial_{v_i}(D(\z,v)f(\z,\v,t)) \right]. 
\ee
Multiplying the above equation by $2 p(\z) \partial_{\z}^{\ell} f$ and integrating over $\RR^{d_v} \times I_{\z}$ we get 
\begin{equation}
\label{eq:thMF}
\begin{split}
&\frac{\partial}{\partial t} \|\partial_{\z}^\ell f(t)\|_{L^2_p(\RR^{d_v} \times I_{\z})} \\
&\quad= \int_{\RR^{d_v} \times I_{\z}} \sum_{i=1}^{d_v} 2p(\z)\partial_{\z}^\ell f \partial_{\z}^\ell \left[ \mathcal B[f](\z,\v,t)\partial_{v_i}f(\z,\v,t) + f(\z,\v,t) \partial_{v_i}\mathcal B[f](\z,\v,t)\right]d\z\,d\v \\
&\quad+\int_{\RR^{d_v} \times I_{\z}} \sum_{i=1}^{d_v} 2p(\z)\partial_{\z}^\ell f\partial_{\z}^\ell\partial_{v_i}^2 \left( D(\z,v)f(\z,\v,t) \right)d\z\,d\v. 
\end{split}
\end{equation}
Hence, we have
\be
\begin{split}
\partial_{\z}^\ell \left( \mathcal B[f](\z,\v,t)\partial_{v_i}f(\z,\v,t)\right) &= \sum_{k=0}^{\ell} \binom{\ell}{k}\partial_{\z}^{k} \mathcal B[f](\z,\v,t) \partial_{\z}^{\ell-k}\partial_{v_i}f(\z,\v,t)\\
& \le c_{\mathcal B,1} \partial_{\z}^\ell \partial_{v_i}f(\z,\v,t)
\end{split}
\ee
and
\be
\begin{split}
\partial_{\z}^\ell \left( \partial_{v_i}\mathcal B[f](\z,\v,t)f(\z,\v,t)\right) &= \sum_{k=0}^{\ell} \binom{\ell}{k}\partial_{\z}^{k} \partial_{v_i}\mathcal B[f](\z,\v,t) \partial_{\z}^{\ell-k}f(\z,\v,t)\\
& \le c_{\mathcal B,2} \partial_{\z}^\ell f(\z,\v,t). 
\end{split}
\ee
From \eqref{eq:thMF} we  have
\begin{equation}
\begin{split}
&\frac{\partial}{\partial t} \| \partial_{\z}^\ell f(t)\|_{L^2_p(\RR^{d_v} \times I_{\z})}\\
&\quad \le c_{\mathcal B,1} \int_{\RR^{d_v} \times I_{\z}} \sum_{i=1}^{d_v}2p(\z) \partial_{\z}^\ell f \partial_{v_i}\partial_\z^\ell f(\z,\v,t)d\z\,dv+ c_{\mathcal B,2} \| \partial_{\z}^\ell f(\z,\v,t)\|_{L^2_p(\RR^{d_v}\times I_{\z})}^2 \\
&\qquad- 2c_D\| \nabla_v \cdot f\|^2_{L^2_p(\RR^{d_v} \times I_{\z})}. 
\end{split}
\end{equation}
We then observe that
\be
\int_{\RR^{d_v} \times I_{\z}}2p(\z) \partial_{\z}^\ell f \partial_{v_i}\partial_\z^\ell f(\z,\v,t)d\z\,d\v = 0, \qquad i = 1,\dots,d_v
\ee
since the boundary terms vanish, from which we can conclude thanks to the Gronwall inequality as done in \eqref{grn} . 
\end{proof}


 \section{Micro-macro stochastic Galerkin methods}\label{sec:SG}
 In this section we introduce the stochastic Galerkin {(sG)} framework for the numerical approximation of nonlinear Fokker-Planck type equations in the form \eqref{eq:MF_general}-\eqref{eq:J_meanfield}. In particular, by means of a suitable reformulation of the Fokker-Planck model in the {sG} setting, we will show how it is possible to construct numerical methods that preserve the asymptotic behavior of the solution as opposite to the direct application of the {sG} approach to the original equation. Within this choice, we will see that highly accurate results compared to a standard approach can be obtained as soon as one gets closer to the steady state solution. We first recall the basic of the {sG} method in the context of kinetic equations following^^>\cite{Hu2017,Pareschi} and we derive some stability results for the models of interest for what concerns the {sG} expansion. 
 
 \subsection{Preliminaries on stochastic Galerkin methods}\label{sec: SG1}
Let $(\Omega,\mathcal F,P)$ be a probability space and let us define a random variable 
\be
\z:(\Omega,\mathcal F)\rightarrow (I_{z},\mathcal B_{\RR^{d_{z}}}), 
\ee
with $I_{z}\subseteq\RR^{d_{z}}$ and $\mathcal B_{\RR^{d_{z}}}$ the Borel set. We consider a function $f(\z,\v,t)$, $f\in L^2(\Omega)$ in the random space. Let now $\mathbb P_M$ be the orthogonal polynomial space of degree $M$
\be
\mathbb P_M = \{ g : I_{z}\rightarrow \RR: g\in \textrm{span}\{\Phi_h\}_{h=0}^M \},
\ee
where $\{\Phi_h\}_{h=0}^M$ is a set of polynomials of $\z$ with degree up to $M\ge 0$. These polynomials form an orthonormal basis of $L^2(\Omega)$, i.e.
\be
\int_{I_{z}}\Phi_h(\z)\Phi_k(\z)\,p(\z)\,d\z = \delta_{hk},
\ee
with $\delta_{hk}$ the Kronecker delta function and $p(\z)$ the probability distribution function random variable $\z\in I_{z}$. 
The function $f$ can be represented in $L^2(\Omega)$ as follows 
\be
f(\z,\v,t) = \sum_{h=0}^{\infty}  \int_{I_{z}}f(\z,\v,t)\Phi_{{h}}(\z)p(\z)d\z \Phi_{{h}}(\z) =  \sum_{k=0}^{\infty} \hat f_{{h}}(\v,t)\Phi_{{h}}(\z).
\ee
In this setting, the idea behind generalized polynomial chaos method is to approximate  $f(\z,\v,t)$ in $\mathbb P_M$ with the orthogonal projection $\mathcal P_M: L^2(\Omega)\to \mathbb P_M$ defined as $\mathcal P_M f = f_M$ with 
\be
f_M(\z,\v,t)=\sum_{{h=0}}^M \hat f_{{h}}(\v,t)\Phi_{{h}}(\z).
\ee
It is well-known that the above projection, referred to as generalized polynomial chaos (gPC) expansion, corresponds to the polynomial of best approximation in $\mathbb P_M$ (see^^>\cite{xiu2010BOOK}).
A stochastic Galerkin method for \eqref{eq:MF_general}-\eqref{eq:J_meanfield} is then obtained as
\be\label{SG_Eq}
\dfrac{\partial}{\partial t}f_M(\z,\v,t) = \mathcal P_M \mathcal J(f_M,f_M)(\z,\v,t) = \mathcal J_M(f_M,f_M)(\z,\v,t), 
\ee
with 
\be\label{eq:J_meanfieldSG}
\mathcal J(f_M,f_M) (\z,\v,t)= \nabla_v \cdot \left[ \mathcal B[f_M](\z,\v,t)f_M(\z,\v,t)+\nabla_v (D(\z,\v)f_M(\z,\v,t)) \right].
\ee
{In a stochastic Galerkin framework, i}n order to derive a system of differential equations for the coefficients $\hat f_h(\v,t)$, $h=1,..,M$ of the expansion, after substituting $f$ with $f_M$ in \eqref{eq:MF_general}-\eqref{eq:J_meanfield}, one multiplies the resulting equation by $\Phi_h(\z)$ and takes the expectation over the random space $I_z$ to get
\be
\begin{split}
\frac{\partial}{\partial t} \int_{I_{z}}f_M(\z,\v,t)\Phi_h(\z)p(\z)d\z &=  \int_{I_{z}}\mathcal J(f_M,f_M)(\z,\v,t)\Phi_h(\z)p(\z)d\z\end{split}.
\ee
Hence, from the orthogonality of the polynomial basis of the linear space we obtain a coupled system of $(M+1)$ equations describing the evolution of the projection coefficients
\be\label{eq:gPCMF}
\begin{split}
\frac{\partial}{\partial t} \hat f_h(\v,t)  = \nabla_v \cdot \left[ \sum_{k=0}^M \mathcal{B}_{hk}[f_M]\hat f_k(\v,t) + \nabla_v \left(\sum_{k=0}^M D_{hk}(\v)\hat f_k(\v,t) \right) \right],\qquad {h = 0,\dots,M},
\end{split}
\ee
where
\be\label{eq:Bhk}
\begin{split}
\mathcal{B}_{hk}[f_M](\v,t)  =&  \int_{I_{z}} \mathcal B[ f_M]\Phi_k(\z)\Phi_h(\z)\,p(\z)\,d\z \\
=& \int_{I_z} \int_V P(\z,\v,\v_*)|\v-\v_*|f_M(\z,\v_*,t)d\v_* \Phi_k(\z)\Phi_h(\z)\,p(\z)\,d\z\\
=& \sum_{l=0}^M\int_{I_z} \int_V P(\z,\v,\v_*)|\v-\v_*| \hat f_l(\v_*,t)\Phi_l(\z)d\v_* \Phi_k(\z)\Phi_h(\z)\,p(\z)\,d\z
\end{split}\ee
and 
\be\label{eq:Dhk}
D_{hk}(\v)  =  \int_{I_{z}} D(\z,\v)\Phi_k(\z)\Phi_h(\z)\,p(\z)\,d\z. 
\ee
Equation \eqref{eq:gPCMF} can be rewritten in vector notations as follows
\be\begin{split}\label{eq:sG_system}
\frac{\partial}{\partial t} \hat{\textbf{f}}(\v,t) =  \nabla_v \cdot \left[\textbf{B}[f_M](\v,t)\hat{\textbf{f}}(\v,t)  + \nabla_v \textbf{D}(\v)\hat{\textbf{f}}(\v,t)  \right],
\end{split}\ee
where $\hat{\textbf{f}}(\v,t)= \left( \hat f_0(\v,t),\dots,\hat f_M(\v,t)\right)^T$ and the components of the matrices $\textbf B[f_M] = \{\mathcal B_{hk}[f_M]\}_{h,k=0}^M$, $\textbf{D}=\{D_{hk}\}_{h,k=0}^M$ are given by \eqref{eq:Bhk}-\eqref{eq:Dhk}.
Indicating now with $\|\hat{\textbf{f}} \|_{L^2(V)}$ the standard $L^2$ norm of the vector $\hat{\textbf{f}}$ in the physical space
\be
\|\hat{\textbf{f}} \|_{L^2(V)} = \left( \int_{ V}\sum_{{h=0}}^M \hat f_{{h}}^2 d\v\right)^{1/2},
\ee
we observe that since the norm of $f_M$ in $L_p^2(I_z\times V)$ reads
\be
\| f_M\|_{L_p^2(I_z\times V)} =\left( \int_{I_z} \int_{V} \left(\sum_{{h=0}}^M \hat f_{{h}} \Phi_{{h}}(\z) \right)^2 d\v p(\z)d\z \right)^{1/2}
\ee
then, thanks to the orthonormality of the chosen polynomial basis we have 
$$\|f_M \|_{L_p^2(I_z\times V)} = \|\hat{\textbf{f}} \|_{L^2(V)}.$$
Furthermore, from the definition of the matrices $\textbf B$, $\textbf D$ we have
\be
\textbf B_{hk} = \textbf B_{kh}\qquad \textbf D_{hk} = \textbf D_{kh},
\ee
and therefore $\textbf{B}$, $\textbf{D}$ are symmetric. 

Once that the system of equations \eqref{eq:sG_system} has been derived then it can be solved through suitable numerical methods which approximates the time and the physical variables. To that aim, insights on statistical quantities like the expected value and variance of the solution of the differential problem are defined in terms of the introduced projections into the space of the stochastic polynomials. In particular the following quantities are defined from the gPC-{sG} methods
\be
\mathbb E_{\z}[f(\z,\v,t)] \approx \hat f_0(\v,t),
\ee
and 
\be
\textrm{Var}_{\z}[f(\z,\v,t)] \approx \int_{I_{z}}\left( \sum_{{h=0}}^M \hat f_{{h}}(\v,t)\Phi_{{h}}(\z)-\hat f_0(\v,t) \right)^2 p(\z)d\z,
\ee
and thanks to the orthogonality of polynomial expansion
\be
\textrm{Var}_{\z}[f(\z,\v,t)] \approx \sum_{{h=0}}^M \hat f_{{h}}^2(\v,t)- \hat f_0^2(\v,t).
\ee
In the following, considering $D(\z,\v)= D(\v)$, we can prove the following stability result:
\begin{thm}
\normalfont
If $\|\partial_v \mathcal B_{hk}\|_{L^{\infty}}\le C_{\textbf{B}}$ with $C_{\textbf{B}}>0$ for all $h,k=0,\dots,M$ and if the diffusion functions are such that $D(\v)\le C_{\textbf{D}}$ for all $h,k=0,\dots,M$, hence we have
\[
\|\hat{\textbf{f}}(t) \|_{L^2}^2 \le e^{t(C_{\textbf{B}})} \| \hat{\textbf{f}}(0) \|_{L^2}^2
\]
hence stability of the {sG} expansion.
\end{thm}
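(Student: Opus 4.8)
The plan is to establish the bound by a standard energy estimate for the gPC system \eqref{eq:sG_system}, exploiting the symmetry of $\textbf{B}$ together with the sign of the diffusion. Working in the one-dimensional velocity setting $v\in V\subseteq\RR$ (consistent with the hypothesis being phrased through $\partial_v\mathcal B_{hk}$; the extension to $\v\in\RR^{d_v}$ is componentwise), the system reads $\partial_t\hf = \partial_v[\textbf{B}[f_M]\hf + \partial_v(\textbf{D}\hf)]$. First I would differentiate $\|\hf(t)\|_{L^2}^2$ in time, substitute the evolution equation, and integrate by parts in $v$:
\[
\frac{d}{dt}\|\hf(t)\|_{L^2}^2 = 2\int_V \hf^T\,\partial_v\!\left[\textbf{B}\hf + \partial_v(\textbf{D}\hf)\right]dv = -2\int_V(\partial_v\hf)^T\!\left[\textbf{B}\hf + \partial_v(\textbf{D}\hf)\right]dv,
\]
the boundary terms vanishing by the no-flux or decay conditions already invoked in the preceding regularity proofs. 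Since $D(\z,\v)=D(\v)$ is independent of $\z$, the matrix $\textbf{D}=D(v)I$ is a multiple of the identity.

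Next I would split into the drift and diffusion contributions. For the drift term, the key step is to use the symmetry $\textbf{B}_{hk}=\textbf{B}_{kh}$ to write
\[
2(\partial_v\hf)^T\textbf{B}\hf = \partial_v(\hf^T\textbf{B}\hf) - \hf^T(\partial_v\textbf{B})\hf,
\]
so that $-2\int_V(\partial_v\hf)^T\textbf{B}\hf\,dv = \int_V\hf^T(\partial_v\textbf{B})\hf\,dv$, the total derivative integrating to zero. This is precisely where the hypothesis $\|\partial_v\mathcal B_{hk}\|_{L^\infty}\le C_{\textbf{B}}$ is used, bounding the surviving term by $C_{\textbf{B}}\|\hf\|_{L^2}^2$ (interpreting $C_{\textbf{B}}$ as a bound for the spectral norm of $\partial_v\textbf{B}$, or absorbing an $M$-dependent factor coming from the entrywise bound into the constant).

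For the diffusion contribution I already have $-2\int_V(\partial_v\hf)^T\partial_v(D\hf)\,dv$ from the single integration by parts above; expanding $\partial_v(D\hf)=D'\hf + D\,\partial_v\hf$, the leading piece $-2\int_V D\,|\partial_v\hf|^2\,dv\le 0$ is dissipative because $D\ge 0$ and may simply be discarded, while the cross term containing $D'$ can be integrated by parts onto $\partial_v|\hf|^2$ and controlled by a boundedness/regularity assumption on $D$ (or absorbed, under $D\le C_{\textbf{D}}$, into the constant). Collecting all terms yields the differential inequality
\[
\frac{d}{dt}\|\hf(t)\|_{L^2}^2 \le C_{\textbf{B}}\,\|\hf(t)\|_{L^2}^2,
\]
from which the claimed estimate follows by the Gronwall inequality, exactly as in \eqref{grn}. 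I expect the main obstacle to be the drift term: the symmetry-based rewriting that converts the a priori sign-indefinite quantity $-2\int_V(\partial_v\hf)^T\textbf{B}\hf\,dv$ into something controlled purely by $\partial_v\textbf{B}$, together with the careful treatment of the variable-diffusion cross term; once these are dispatched, the diffusion being of fixed sign is comparatively benign and the conclusion reduces to Gronwall's lemma.
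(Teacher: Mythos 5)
Your proposal follows essentially the same route as the paper's proof: an energy estimate for \eqref{eq:sG_system} in which the drift term is tamed by the symmetry of $\textbf{B}$ (your vector identity $2(\partial_v\hf)^T\textbf{B}\hf = \partial_v(\hf^T\textbf{B}\hf) - \hf^T(\partial_v\textbf{B})\hf$ is exactly the paper's index-notation manipulation yielding $2\sum_{h,k}\int \hat f_h \partial_{v_i}(\mathcal B_{hk}\hat f_k)\,d\v = \sum_{h,k}\int \hat f_h\hat f_k\,\partial_{v_i}\mathcal B_{hk}\,d\v$), the diffusion term is discarded as dissipative, and Gronwall concludes. You are, if anything, more candid than the paper about the two points it glosses over --- the $M$-dependent factor hidden in passing from the entrywise bound on $\partial_v\mathcal B_{hk}$ to a bound by $\|\hf\|_{L^2}^2$, and the cross term $\partial_v D$ in the diffusion, which the paper's hypothesis $D(\v)\le C_{\textbf{D}}$ does not by itself control.
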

\begin{proof}
 We multiply \eqref{eq:gPCMF} by $\hat f_h$ and we integrate over $V$, this gives
\be\label{eq:thm_stab1}
\begin{split}
&\int_{V} \left[ \frac{\partial}{\partial t} \left(\dfrac{1}{2}\hat f_h^2\right) \right] dv=  \int_{V} \left\{\sum_{i=1}^{d_v}\partial_{v_i}  \left[ \sum_{k=0}^M \mathcal B_{hk}\hat f_k + \partial_{v_i}  D\hat f_h \right]\right\} \hat f_h  d\v.
\end{split}
\ee
Furthermore, we have integrating by part and using the zero flux boundary conditions
\be\label{eq:thm_stab2}
\begin{split}
&\sum_{k=0}^M \int_{V} \hat f_h \partial_{v_i} \left( \mathcal B_{hk}\hat f_k\right)  d\v \\
&\qquad = \sum_{k=0}^M \int_{V}  \left( \hat f_k \hat f_h\partial_{v_i}  \mathcal B_{hk} + \mathcal B_{hk}  \hat f_h\partial_{v_i}\hat f_k \right)  d\v \\
& \qquad  = - \sum_{k=0}^M \int_{V} \mathcal B_{hk} \partial_{v_i} \left(\hat f_{h}\hat f_{k}\right)d\v-\sum_{k=0}^M \int_{V}  \hat f_k\partial_{v_i} \left( \mathcal B_{hk}\hat f_h \right) d\v
\end{split}
\ee
now from \eqref{eq:thm_stab2} summing $h$ up to $M$ and thanks to the symmetry of $\textbf B$ we have
\[
\begin{split}
2\sum_{h,k=0}^M \int_{V} \hat f_h \partial_{v_i}(\mathcal  B_{hk}\hat f_k) d\v &= - \sum_{h,k=0}^M \int_{V} \mathcal B_{hk}\partial_{v_i}  (\hat f_{k}\hat f_{h})d\v \\
&= \sum_{h,k=0}^M \int_{ V} \hat f_h \hat f_k (\partial_{v_i} \mathcal B_{hk})d\v
\end{split}
\]
and since $\|\partial_v \mathcal B_{hk} \|_{L^\infty}\le C_{\mathbf B}$ we obtain
\[
\begin{split}
\sum_{h,k=0}^M \int_{V} \hat f_h \partial_{v_i}( \mathcal B_{hk}\hat f_k) d\v \le \dfrac{C_{\textbf{B}}}{2} \sum_{h,k=0}^M \int_{V} \hat f_h \hat f_k d\v, 
\end{split}\]
and from the Cauchy-Schwarz inequality we obtain
\be\label{eq:ineq_B}
\sum_{h,k=0}^M \int_{V} \hat f_h \partial_{v_i}( \mathcal B_{hk}\hat f_k) d\v \le \dfrac{C_{\textbf{B}}}{2} \| \hat{\textbf{f}} \|_{L^2}^2
\ee
Furthermore since  $ D(\v) \le C_{\textbf{D}}$ 
 we have 
\be\label{eq:ineq_D}
\begin{split}
	\sum_{h=0}^M \int_{V} \partial_{v_i}  (\partial_{v_i} D\hat f_h )\hat f_hd\v\le  - C_{\textbf{D}}\sum_{h=0}^M \int_{V}( \partial_{v_i}  \hat f^2_h) d\v
\end{split}
\ee
Hence
\[
\begin{split}
\dfrac{1}{2}\frac{\partial}{\partial t} \| \hat{\textbf{f}} (t)\|^2_{L^2} &\le \dfrac{C_{\textbf{B}}}{2}\|\hat{\textbf{f}}(t) \|_{L^2}^2 - C_{\textbf{D}} \| \partial_v \hat{\textbf{f}}(t) \|_{L^2}^2\\
&\le \left(\dfrac{C_{\textbf{B}}}{2} \right) \| \hat{\textbf{f}}(t) \|_{L^2}^2
\end{split}\]
Finally, thank to the Gronwall's theorem we can conclude. 
\end{proof}

\subsection{Equilibrium preserving stochastic Galerkin schemes}\label{sec:micro_macro}


{Let us know consider a simple formal analysis of the consistency error of the sG method discussed in the previous section. As} well known, for the {sG} approximation if the function $f\in H^{s}(I_z)$ one has an error estimate of the type^^>\cite{xiu2010BOOK}
\be\label{consistency}
\|f-f_M\|_{L_p^2(I_z)}\leq C_s M^{-s}
\ee
where $C_s$ represents a constant that depends on the regularity of the distribution $f(\z,\v,t)$ with respect to the random variable $\z$. 
When, in addition, equation \eqref{eq:sG_system} is additionally discretized in the physical space then for the approximated distribution $f_{M,\Delta v}$, where $\Delta v$ represents the typical mesh size, one gets an estimate of the form
\be\label{consistency1}
\|f-f_{M,\Delta v}\|_{L_p^2(I_z\times V)}\leq C_s M^{-s}+ C_v(\Delta v^p)
\ee
where $p$ is the order of the discretization employed in the physical space and $C_v$ a suitable constant. However, for large times, the behavior of the distribution $f(\z,\v,t)$ tends towards an equilibrium state $f^\infty(\z,\v)$ for which one has that $\mathcal J(f^\infty,f^\infty)(\z,\v,t)=0$. Now, for the fully discretized scheme, it holds an analogous estimate of \eqref{consistency1}, i.e.
\be\label{residual}
\| \mathcal J_M (f^\infty_{M,\Delta v},f^\infty_{M,\Delta v})\|_{L_p^2(I_z\times V)}\leq C_{s} M^{-s}+ C_{v}(\Delta v^p),
\ee
and therefore the method does not possess the highly desirable property that the orthogonal projection of the equilibrium distribution in the polynomial space $\mathbb P_M$ annihilate the right hand side at the discrete level. In other words, {we get 
$\mathcal J_M(f_M^\infty,f_M^\infty)(\z,\v,t)\neq 0$ whereas at the continuous level we have $\mathcal J(f^\infty,f^\infty)(\z,\v,t)=0$. 
Preserving the latter structural property at a discrete level}, in fact, will avoid the accumulation of errors for large times and permits to have an accurate description of steady state solutions. Several works addressed this problem at the deterministic level (see for example^^>\cite{CC,PZ2018,Zan20,Gosse,Pareschi2017,Filbet,Kuramoto}), however, due to the intrusive nature of stochastic Galerkin methods these properties are lost during the projection process. 


\subsubsection{A micro-macro stochastic Galerkin formulation}
In order to guarantee {accuracy for large times} in the context of stochastic-Galerkin schemes  
we propose a new method based on a micro-macro decomposition. A similar technique has been also recently developed in the fully deterministic case to construct spectral methods for Boltzmann-type collision operators that preserves exactly the Maxwellian asymptotic profile of the system^^>\cite{Pareschi2017,Filbet}. 

Let us recall at this point the notion of quasi-equilibrium state \eqref{eq:quasi_eq} associated to the nonlinear Fokker-Planck equation of interest introduced in the previous section. We observe that $f^q(\z,\v,t)$ is a time dependent distribution satisfying at each instant of time the relation $J(f^q,f^q)(\z,\v,t) = 0$. Therefore, similarly to what happens with classical micro-macro formulations for each time $t\ge 0$, one can decompose the solution of the Fokker-Planck model \eqref{eq:MF_general}-\eqref{eq:J_meanfield} in a quasi-equilibrium part and a reminder term as follows
\be\label{mm}
f(\z,\v,t) = f^q(\z,\v,t) + g(\z,\v,t),
\ee
with $g(\z,\v,t)$ a distribution such that
\be\label{mom}
\int_{\mathbb R^{d_v}}\varphi(\v)g(\z,\v,t)d\v = 0 \qquad \textrm{if}\qquad  \int_{\mathbb R^{d_v}}\varphi(\v)f(\z,\v,t)d\v =  \int_{\mathbb R^{d_v}}\varphi(\v)f^q(\z,\v,t)d\v,
\ee
for all test functions $\varphi(\v)$ characterizing the conservation properties of the Fokker-Planck equation under study. For example, if the quasi-equilibrium state shares the same density of the original kinetic distribution $f(\z,\v,t)$, then for $\phi(\v)=1$ equation \eqref{mom} is satisfied.
We have the following result:
\begin{prop}
Let us consider the nonlinear Fokker-Planck model \eqref{eq:MF_general}-\eqref{eq:J_meanfield}. For each time $t\ge 0$ and $\z\in I_z$ using the micro-macro decomposition \eqref{mm} the operator $\mathcal J(f,f)$ can be rewritten as 
\be
\mathcal J(f,f) = \mathcal J(g,g) + \mathcal G(f^q,g),
\ee
where 
\be
\mathcal G(f^q,g)(\z,\v,t) = \nabla_v\cdot\left(\mathcal B[f^q](\z,\v,t)g(\z,\v,t) + \mathcal B[g](\z,\v,t)f^q(\z,\v,t)\right).
\label{eq:G}
\ee
Moreover, if $f(\z,\v,t) \to f^\infty(\z,\v)$ as $t\to \infty$ then $f^q(\z,\v,t)\to f^\infty(\z,\v)$ and 
$g(\z,\v,t) \to 0$.
\end{prop}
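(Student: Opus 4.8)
The plan is to split the proposition into two logically independent parts: first the algebraic decomposition of the operator, and then the asymptotic limit statement.

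For the decomposition, I would start directly from the bilinear structure of the operator. Looking at \eqref{eq:J_meanfield}, the operator $\mathcal{J}$ is nonlinear in $f$ only through the product $\mathcal{B}[f]f$, since $\mathcal{B}[\cdot]$ is itself linear in its argument (it is an integral operator acting linearly on $f$, see \eqref{eq:Bf}), while the diffusion term $\nabla_v(D f)$ is genuinely linear in $f$. The plan is to substitute $f = f^q + g$ from \eqref{mm} into $\mathcal{J}(f,f)$ and expand. By linearity of $\mathcal{B}$ we have $\mathcal{B}[f^q+g] = \mathcal{B}[f^q] + \mathcal{B}[g]$, so the nonlocal product expands as
\be
\mathcal{B}[f^q+g](f^q+g) = \mathcal{B}[f^q]f^q + \mathcal{B}[f^q]g + \mathcal{B}[g]f^q + \mathcal{B}[g]g,
\ee
and the diffusion term splits linearly as $\nabla_v(D(f^q+g)) = \nabla_v(Df^q) + \nabla_v(Dg)$. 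Grouping the pure-$f^q$ terms, the pure-$g$ terms, and the mixed terms, the pure-$f^q$ contribution $\nabla_v\cdot(\mathcal{B}[f^q]f^q + \nabla_v(Df^q)) = \mathcal{J}(f^q,f^q)$ vanishes identically because $f^q$ annihilates the flux for every $t$ by its defining property (discussed after \eqref{eq:quasi_eq}). The pure-$g$ terms reassemble into $\mathcal{J}(g,g)$, and the two mixed terms are exactly $\mathcal{G}(f^q,g)$ as defined in \eqref{eq:G}. This yields the claimed identity $\mathcal{J}(f,f) = \mathcal{J}(g,g) + \mathcal{G}(f^q,g)$.

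For the asymptotic statement, I would argue as follows. Suppose $f(\z,\v,t) \to f^\infty(\z,\v)$ as $t \to \infty$. The convergence $f^q \to f^\infty$ follows because the quasi-equilibrium $f^q$ is determined through the conservation constraints \eqref{mom}: the moments of $f^q$ coincide with those of $f$ by construction, and as noted after \eqref{eq:quasi_eq}, when $\mathcal{B}[\cdot]$ becomes time-independent (which happens in the limit as $f$ stabilizes and the macroscopic moments feeding $\mathcal{B}$ converge), the quasi-equilibrium $f^q$ reduces to the genuine steady state $f^\infty$. Concretely, since the conserved moments of $f$ converge to those of $f^\infty$, and $f^q$ depends continuously on these moments through the explicit exponential formula \eqref{eq:quasi_eq}, one obtains $f^q \to f^\infty$. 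Finally $g = f - f^q \to f^\infty - f^\infty = 0$ follows immediately from the decomposition.

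The main obstacle is the asymptotic part rather than the algebraic one. The decomposition is a routine bilinear expansion. The delicate point is justifying $f^q \to f^\infty$ rigorously: this requires that the map from the conserved moments to the quasi-equilibrium profile \eqref{eq:quasi_eq} be continuous and that the limiting moments indeed reproduce $f^\infty$. I would handle this by observing that $f^\infty$ itself satisfies the vanishing-flux relation \eqref{eq:rel_inf_hom} and therefore coincides with $f^q$ evaluated at the limiting (stationary) value of $\mathcal{B}$; since the moments of $f$ determining $\mathcal{B}$ converge to their equilibrium values, continuity of the exponential representation gives convergence of $f^q$ to $f^\infty$. At the level of rigor appropriate to this proposition, this continuity argument suffices, and the statement then closes.
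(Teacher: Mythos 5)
Your proof is correct and takes essentially the same approach as the paper's own (much terser) proof: substitute the decomposition \eqref{mm} into $\mathcal J(f,f)$, expand using the bilinear structure of the operator, invoke $\mathcal J(f^q,f^q)=0$ to eliminate the pure quasi-equilibrium term, and deduce the asymptotic claim from the defining formula \eqref{eq:quasi_eq} of the quasi-equilibrium state. Your write-up simply makes explicit the linearity of $\mathcal B[\cdot]$ and the continuity argument for $f^q \to f^\infty$ that the paper leaves implicit.
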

\begin{proof}
The above proposition is proved by substituting \eqref{mm} into the right hand side of \eqref{eq:MF_general} and using the fact that $\mathcal J(f^q,f^q)=0$. The last part is a consequence of \eqref{mm} and the definition of quasi-equilibrium state  \eqref{eq:quasi_eq}.  
\end{proof}
We observe that, in view of the results summarized in Section \ref{sect:classic_FP} for classic Fokker-Planck equations, the nonequilibrium part $g(\z,\v,t)$ exponentially decays to zero for all $\z\in I_{z}$ or in more general situations, as for instance \eqref{sect:FP_collective}, at least a slower but monotone decay can be observed. 

The key idea now is to apply the stochastic-Galerkin approximation to problem
\be
\begin{split}
\frac{\partial}{\partial t}f(\z,\v,t) &= \mathcal J(g,g)(\z,\v,t) + \mathcal G(f^q,g)(\z,\v,t)
\end{split}
\label{eq:MF_eq}
\ee
with $f=f^q+g$, 
instead of the equivalent problem \eqref{eq:MF_general}-\eqref{eq:J_meanfield}. Note, in fact, that the formulation \eqref{eq:MF_eq} with $\mathcal G(f^q,g)$ defined by \eqref{eq:G} embeds the equilibrium assumption $\mathcal J(f^q,f^q)=0$. 

Therefore, in analogy with the micro-macro decomposition \eqref{mm}, in the sG setting we introduce the following decomposition 
\be
f_M(\z,\v,t) = f^q_M(\z,\v,t) + g_M(\z,\v,t),
\label{mmsG}
\ee
being 
\be
f^q_M(\z,\v,t) = \sum_{h=0}^M \widehat{f_h^q}(\v,t)\Phi_h(\z),\qquad \widehat{f^q_h}(\v,t) = \int_{I_z}f^q(\z,\v,t)\Phi_h(\z)p(\z)d\z.
\label{eq:eqpr}
\ee
The resulting stochastic Galerkin scheme then reads
\begin{equation}\label{eq:MMSG}
\begin{split}
\dfrac{\partial}{\partial t} f_M(\z,\v,t) &= \mathcal J_M(g_M,g_M)(\z,\v,t)  + \mathcal G_M(f^q_M,g_M)(\z,\v,t).
\end{split}
\end{equation}
Proceeding now as in Section \ref{sec: SG1}, we obtain the following system of coupled PDEs for the evolution of the coefficients $\hat f_h$, {$h=0,\ldots,M$}
\be
\label{eq:fhath}
\begin{split}
\dfrac{\partial}{\partial t} \hat f_h(\v,t) =& \nabla_v \cdot \left[ \sum_{k=0}^M \mathcal B_{hk}[g_M]\hat g_k + \nabla_v \left( \sum_{k=0}^M D_{hk}(v) \hat g_k\right) \right]\\
& + \nabla_v \cdot \left[ \sum_{k=0}^M \mathcal{B}_{hk}[f^q_M] \hat g_k + \mathcal{B}_{hk}[g_M]\widehat{f^q_k}\right],
\end{split}
\ee
where $B_{hk}[g_M]$ and $B_{hk}[f^q_M]$ are defined as
\be\begin{split}\label{eq:Bhk1}
 &\mathcal{ B}_{hk}[g_M](\v,t)  = \int_{I_{z}} \mathcal B[ g^M]\Phi_k(\z)\Phi_h(\z)p(\z)d\z,\\
& \mathcal{B}_{hk}[f^q_M](\v,t)  = \int_{I_{z}} \mathcal B[ f^q_M]\Phi_k(\z)\Phi_h(\z)p(\z)d\z,
\end{split}\ee
and $\mathcal B[\cdot]$ is the operator defined in \eqref{eq:Bf}. 
 In order to write the resulting system in a more compact vector form, we introduce the vectors 
\be
\hg(\v,t) = \left( \hat g_0(\v,t),\dots,\hat g_M(\v,t)\right)^T,\qquad \hfq(\v,t) = \left( \widehat{f^q}_0(\v,t),\dots,\widehat{f^q}_M(\v,t)\right)^T
\ee
and the matrices $\textbf{B}[g_M] = \left\{\mathcal B_{hk}[g_M] \right\}_{h,k=0}^M$, and $\textbf{B}[f^q_M] = \left\{\mathcal B_{hk}[f^q_M] \right\}_{h,k=0}^M$  together with the diffusion matrix $\textbf{D} = \left\{D_ {hk}\right \}_{h,k=0}^M$ defined in \eqref{eq:Dhk}. Hence the micro-macro gPC scheme may be formulated in vector form as follows
\begin{equation}\label{mm1}
\begin{split}
\dfrac{\partial }{\partial t} \hf(\v,t) =& \nabla_v \cdot \left[ \textbf{B}[g_M](\v,t) \hg(\v,t) + \nabla_v (\textbf{D}(\v)\hg(\v,t)) \right] \\
&+ \nabla_v \cdot \left[ \textbf{B}[f^q_M] \hg(\v,t) + \textbf{B}[g_M]\hfq(\v,t)\right].
\end{split}\end{equation}
For practical purposes, however, it is more convenient to use a different equivalent formulation which also enlighten the relationship with the standard sG approximation \eqref{eq:sG_system}. To this aim we can prove the following result.
\begin{prop}\label{prop4}
Let us consider the stochastic-Galerkin scheme \eqref{mm1} obtained from the micro-macro decomposition \eqref{mmsG} applied to problem \eqref{eq:MF_eq}-\eqref{eq:G}. We have equivalently
{\rm \begin{equation}
\label{eq:MM_ref}
\begin{split}
\dfrac{\partial }{\partial t} \hf(\v,t) =&  \nabla_v \cdot \left[\textbf{B}[f_M](\v,t)\hat{\textbf{f}}(\v,t)  + \nabla_v \textbf{D}(\v)\hat{\textbf{f}}(\v,t)  \right] \\
&\qquad- \nabla_v \cdot \left[\textbf{B}[f^q_M](\v,t)\hfq(\v,t)  + \nabla_v \textbf{D}(\v)\hfq(\v,t)  \right].
\end{split}
\end{equation}}
\end{prop}
\begin{proof}
In fact we have from \eqref{mmsG}
\[
\mathcal J_M(f_M,f_M) = \mathcal J_M(g_M,g_M) + \mathcal J_M(f^q_M,f^q_M) + \mathcal G_M(f^q_M,g_M)
\]
and then we can rewrite \eqref{eq:MMSG} as
\begin{equation}\label{eq:MMSG2}
\begin{split}
\dfrac{\partial}{\partial t} f_M(\z,\v,t) &= \mathcal J_M(f_M,f_M)(\z,\v,t) - \mathcal J(f^q_M,f^q_M)(\z,\v,t).
\end{split}
\end{equation}
The result then follows simply rewriting the last identity in terms of the sG coefficients in vector form.
\end{proof}
{Thank to Proposition \ref{prop4} we may observe that the micro-macro decomposition does not require additional information to compute the evolution of $f_M(\v,\z,t)$ being the quasi-equilibrium obtained from $f_M$ itself. Furthermore,  the} advantage of using equation \eqref{eq:MM_ref} with respect to equation \eqref{eq:sG_system} is clear since when $f_M=f^q_M$ the right-hand side of \eqref{eq:MM_ref} vanishes. This gives to the resulting scheme a well-balanced property, in the sense that is capable to capture exactly the steady states of the system. Note that, the micro-macro decomposition written in the form \eqref{eq:MM_ref} is usually referred to as residual equilibrium formulation (see^^>\cite{Pareschi2017}). 

\begin{rem}
Finally, we observe that the practical evaluation of the quasi-equilibrium state \eqref{eq:quasi_eq} requires a suitable quadrature formula. The accuracy of the formula will be directly transferred to the accuracy of the numerical scheme in physical space for long times, regardless of the accuracy adopted to discretize the differential terms in \eqref{eq:J_meanfield}. The choice of the quadrature formula may depend on the specific problem; a simple choice, in the case where the equilibrium state vanishes at the boundary, is to use the trapezoidal rule on equidistant mesh points, which leads to spectral accuracy for smooth solutions. This choice is adopted in the next section when the equilibrium state is not known a priori.
\end{rem}

 \section{Numerical results}\label{sec:numerics}
 
 In this section we present several numerical examples where the nonlinear Fokker-Planck equations will be solved using the micro-macro stochastic Galerkin (MMsG) scheme introduced in the previous section. In particular, we will highlight how the new scheme leads to very accurate descriptions of the large time behavior of the underlying system compared to a standard stochastic Galerkin formulation. We will take both into account the case of known equilibria as well as we will give numerical evidence of the effectiveness of the quasi-equilibrium formulation of the method in cases where the uncertain equilibrium state is unknown. In all test cases a simple second order method based on central difference approximations has been adopted for the discretization of the derivatives in the physical space of the corresponding Fokker-Planck model.
 
 \subsection{Test 1: The classical Fokker-Planck equation }\label{sec:FP_classic}
 
 In this section, with the aid of an analytical solution, thus ignoring velocity and time discretization errors, we investigate the impact of uncertain parameters of the large time behavior of a classical 1D Fokker-Planck model. In details, first we will take into account the problem with an uncertain relaxation rate complemented with deterministic initial distribution. These hypotheses will imply a deterministic equilibrium distribution $f^\infty(v)$. Next we consider the case of a stochastic initial distribution, giving rise to an uncertain equilibrium distribution $f^\infty(\z,v)$. In details, the accuracy of the gPC projection in the two above mentioned cases is very different. {This highlights the importance for the numerical methods to be able to capture the uncertain long time behavior of the equation together with the potential difficulties emerging in the degenerate case when the uncertainty vanishes with time.
 }
 
 \subsubsection*{Case (a). Uncertain relaxation rate}\label{subsect:11}
Let us consider a Fokker-Planck model characterized by an uncertain relaxation rate $K(\z)>0$ 
\begin{equation}
\label{eq:FP_T11}
\partial_t f(\z,v,t) = K(\z) \partial_v \left[ vf(\z,v,t) + \sigma \partial_v f(\z,v,t) \right], \qquad v\in \mathbb R, K(\z)>0, 
\end{equation}
and complemented with a deterministic initial distribution $f_0(v)$ We consider a deterministic initial distribution   
$$f_0(v) = \alpha v^2\exp\left\{-\beta x^2\right\}, \quad \alpha = \dfrac{3}{2\sigma}\sqrt{\dfrac{3}{2\sigma}},\quad \beta= \dfrac{3}{2\sigma},\quad \sigma >0.$$
For each $t\ge 0$ the exact solution of \eqref{eq:FP_T11} is given by  
\begin{equation}
\label{eq:exact_T1}
f(\z,v,t) = (A(\z,t) + B(\z,t)v^2)\exp\{-s(\z,t)v^2\},
\end{equation}
where $A(\z,t)$, $B(\z,t)$, and $s(\z,t)$ have been defined in \eqref{eq:AB} and \eqref{eq:sevo} (see Appendix \ref{appA}). Since for $t\rightarrow +\infty$ we have  
$$
s\rightarrow \frac{1}{2\sigma}, \quad A \rightarrow \frac{1}{\sqrt{2\pi\sigma}},\quad B\rightarrow 0,
$$ 
the large time distribution of  \eqref{eq:FP_T11}  is defined by the deterministic Maxwellian distribution
\[
f^\infty(v) = \dfrac{1}{\sqrt{2\pi\sigma}} \exp\left\{ - \dfrac{v^2}{2\sigma}\right\}. 
\]
In Figure \ref{fig:EV_T1} we represent the evolution of $\mathbb E[f(\z,v,t)]$ and $\textrm{Var}(f)(\z,v,t)$, being $f(\z,v,t)$ the exact solution of the problem  \eqref{eq:FP_T11}. We consider $K(z) = \frac{1}{2}(z+1)+\frac{1}{2}$, $z \sim \mathcal U([-1,1])$ and we compute the evolution of statistical quantities through a gPC expansion of the exact solution and $M = 5$, therefore
\[
\mathbb E[f(z,v,t)] \approx \hat f_0(v,t), \qquad \textrm{Var}(f(z,v,t)) \approx \sum_{k=0}^{M} \hat f_k^2(v,t)\Phi_k^2(z) - \hat f_0^2(v,t).
\]
We may observe how the variance vanishes for large times since $f^\infty(v)$ is deterministic. 

In Figure \ref{fig:err_T11} we show the evolution of $\textrm{Var}(\|f\|_{L^1(\mathbb R)})$ (left plot) for $M = 1,2,3$. Furthermore, in the second row we represent the $L^1$ relative error for the  variance computed with respect to a reference variance $\textrm{Var}^{\textrm{ref}}(f)$ of the analytical solution $f(z,v,t)$ obtained with $M = 50$. In details, we consider the following $L^1$ relative error 
\begin{equation}
\label{eq:L1}
\epsilon_{\textrm{var}}(t) =\dfrac{ \| \textrm{Var}(f)(v,t) - \textrm{Var}^{\textrm{ref}}(f)(v,t)\|_{L^1(\mathbb R)}}{\| \textrm{Var}^{\textrm{ref}}(f)(v,t)\|_{L^1(\mathbb R)}}. 
\end{equation}
{Since the variance is close to zero we also report in the same figure the absolute error.}

Note that, similarly to^^>\cite{GSVK}, the spectral accuracy of the gPC projection is lost for large times for the solution of \eqref{eq:FP_T11}. {In fact, in the degenerate case in which the equilibrium does not depend on the uncertainties, the variance in the random space vanishes asymptotically. Even if a numerical method captures well this deterministic asymptotic state, the vanishing asymptotic impact of uncertainties may indeed cause a degradation of accuracy in the random space in the transient regime where the variance is close to zero but not zero. A possible way to partially alleviate this problem is to construct a time dependent polynomial basis, as proposed in^^>\cite{GSVK}.}  

{Next, in Figure \ref{fig:err_T11_num} we show the evolution of the absolute $L^1$ error  obtained from the standard sG scheme and the micro-macro sG approach for the Fokker-Planck equation \eqref{eq:FP_T11}. We considered as reference value for the variance the one obtained from the analytical solution of the problem with $M = 50$. In both methods the derivatives in the physical space are solved by standard central differences. We may observe how, using a standard sG scheme, the error saturates in time whereas the micro-macro sG approach provides far more accurate results and for large times yields essentially the same error as the exact solution.   }

\begin{figure}
\centering
\includegraphics[scale = 0.35]{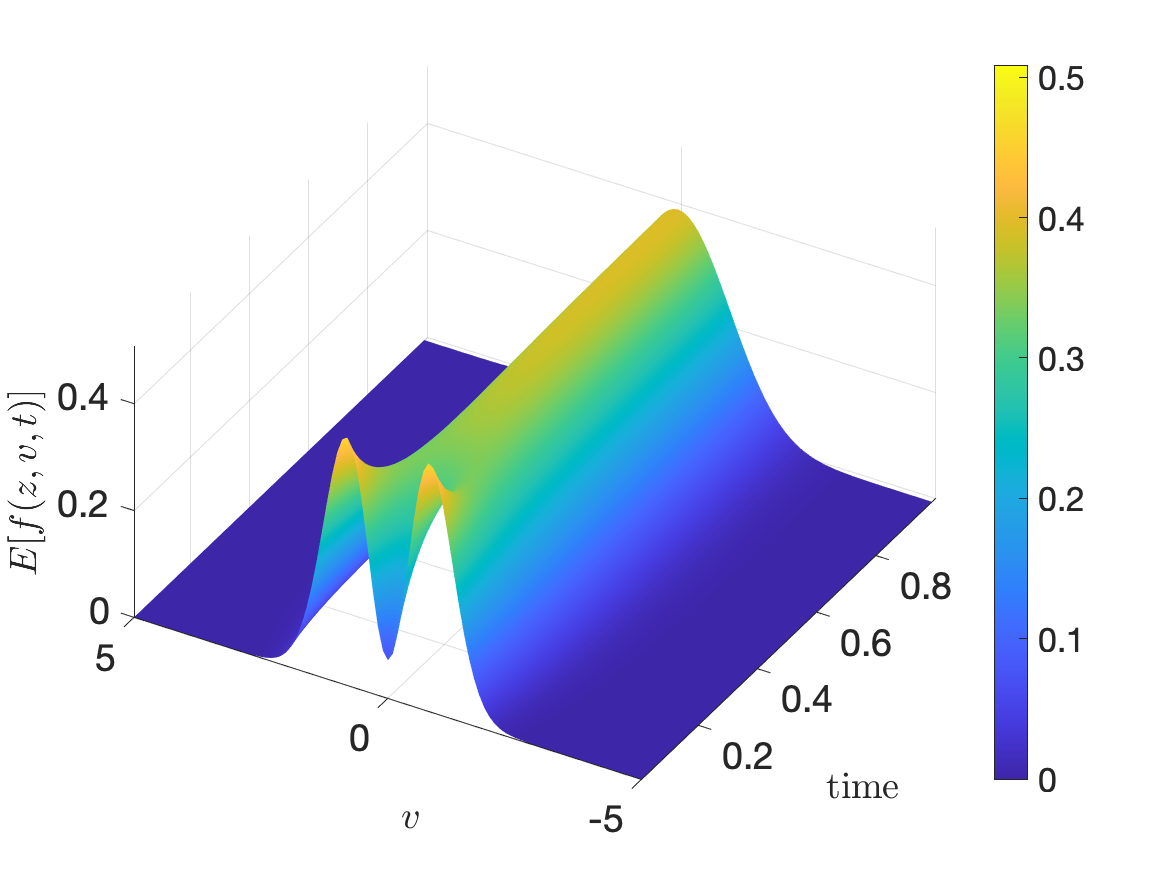}
\includegraphics[scale = 0.35]{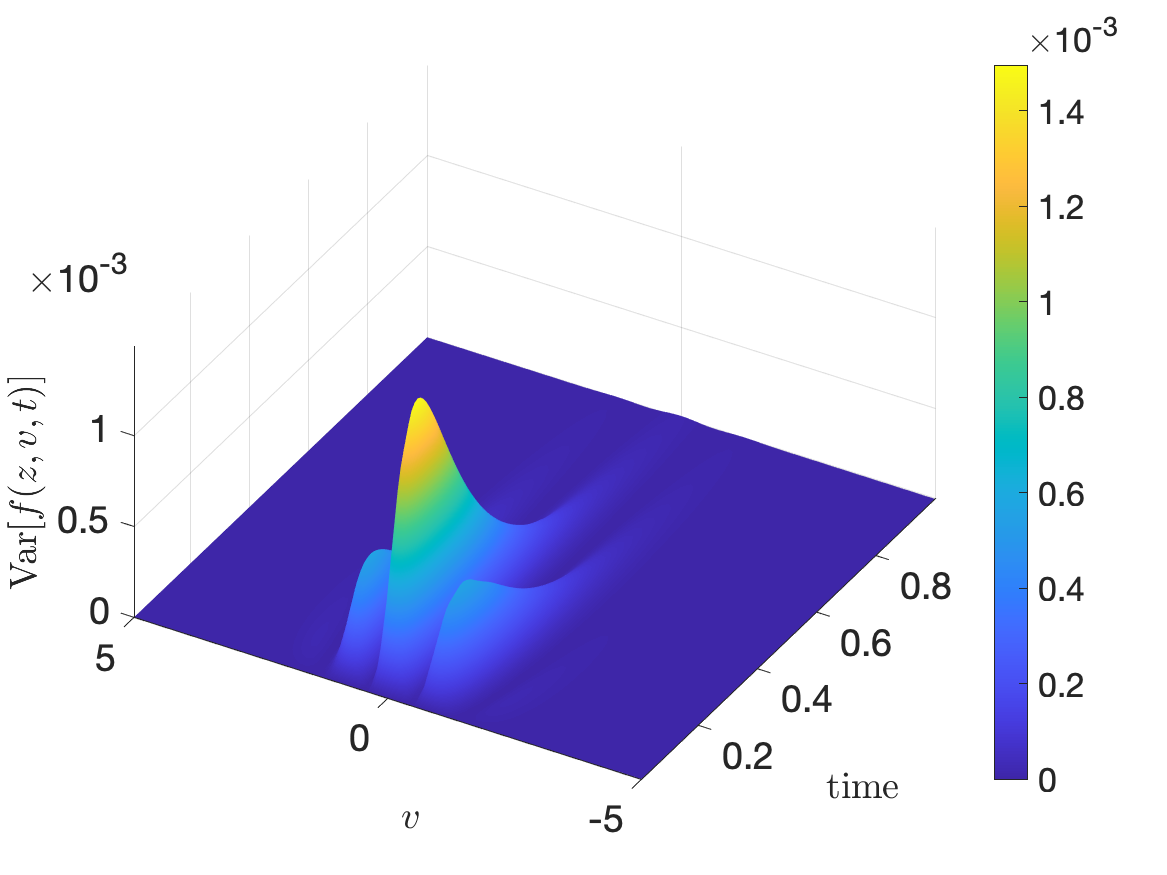}
\caption{\textbf{Test 1(a)}. Evolution of $\mathbb E[f(z,v,t)]$ and $\textrm{Var}[f(z,v,t)]$, with $f(z,v,t)$ the exact solution of \eqref{eq:FP_T11} where $K(z) = \frac{1}{2}(z+1)+\frac{1}{2}$, $z \sim \mathcal U([-1,1])$. The statistical quantities have been determined with a gPC expansion with $M = 5$. }
\label{fig:EV_T1}
\end{figure}

\begin{figure}
\centering
\includegraphics[scale = 0.35]{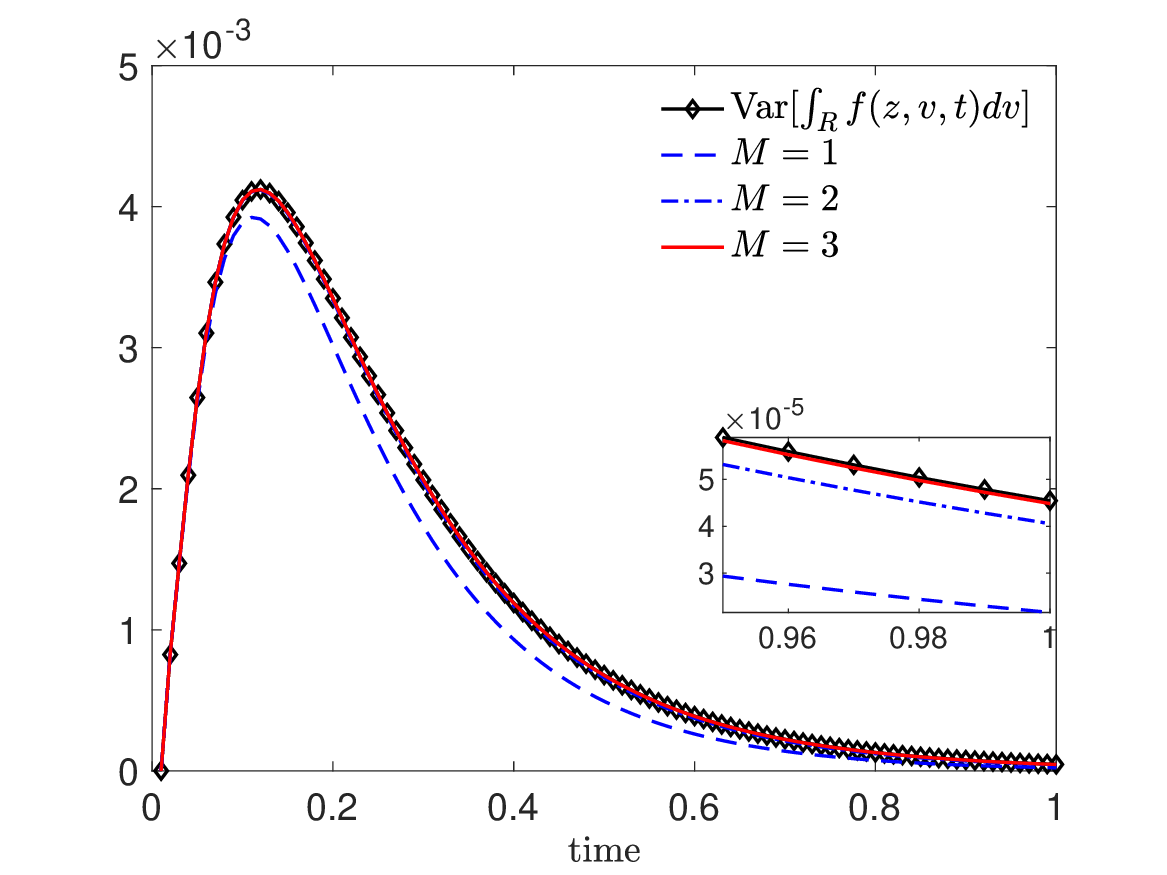}
\includegraphics[scale = 0.35]{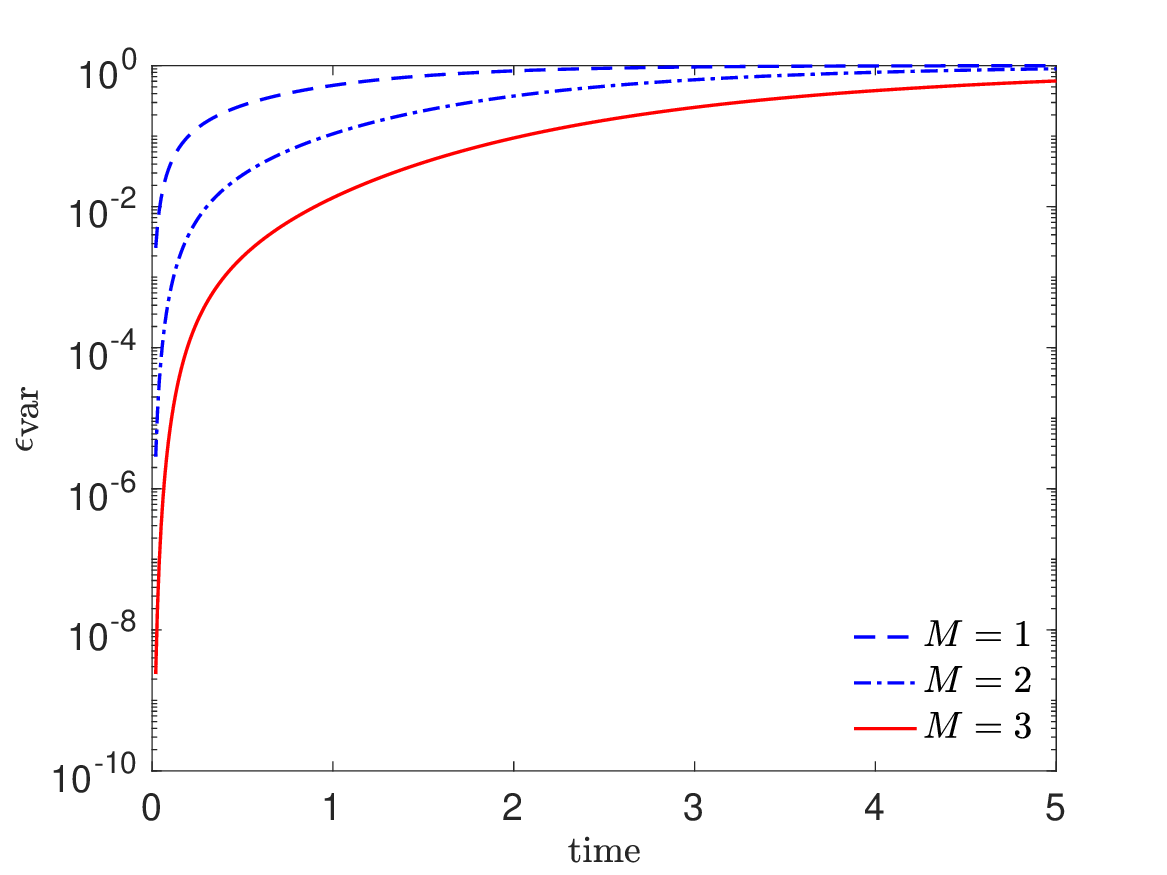}\\
\includegraphics[scale = 0.35]{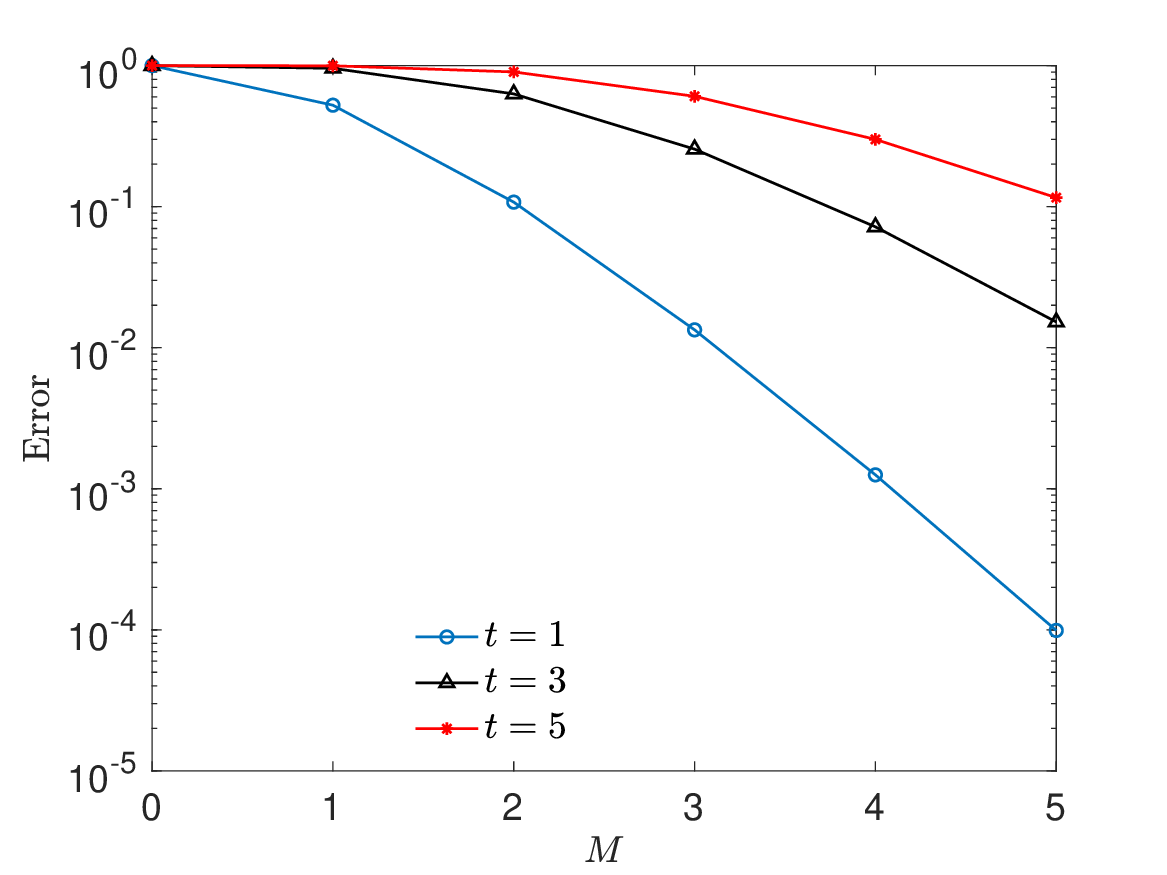}
\includegraphics[scale = 0.35]{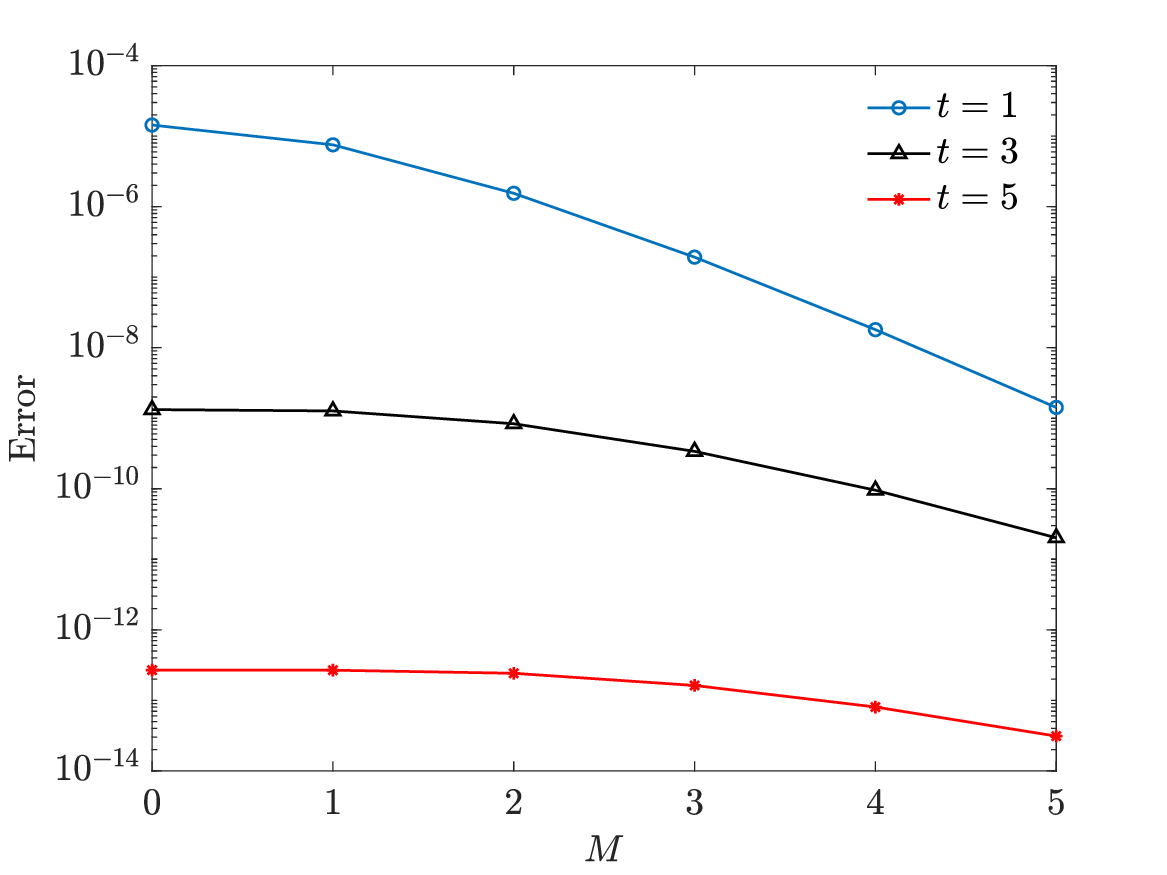}
\caption{\textbf{Test 1(a)}. {Top row: evolution of $\textrm{Var}[\|f(\z,v,t) \|_{L^1(\mathbb R)}]$ and of its gPC approximation for $M=1,2,3$ (left), evolution of the relative $L^1$ error $\epsilon_{\textrm{var}}$ defined in \eqref{eq:L1} for $M = 1,2,3$ (right). Bottom row: gPC expansion error at three fixed time horizons $t = 1$, $t = 3$ and $ t = 5$ and computed for several $M>0$, relative error (left), absolute error (right). }  }
\label{fig:err_T11}
\end{figure}

\begin{figure}
\centering
\includegraphics[scale = 0.35]{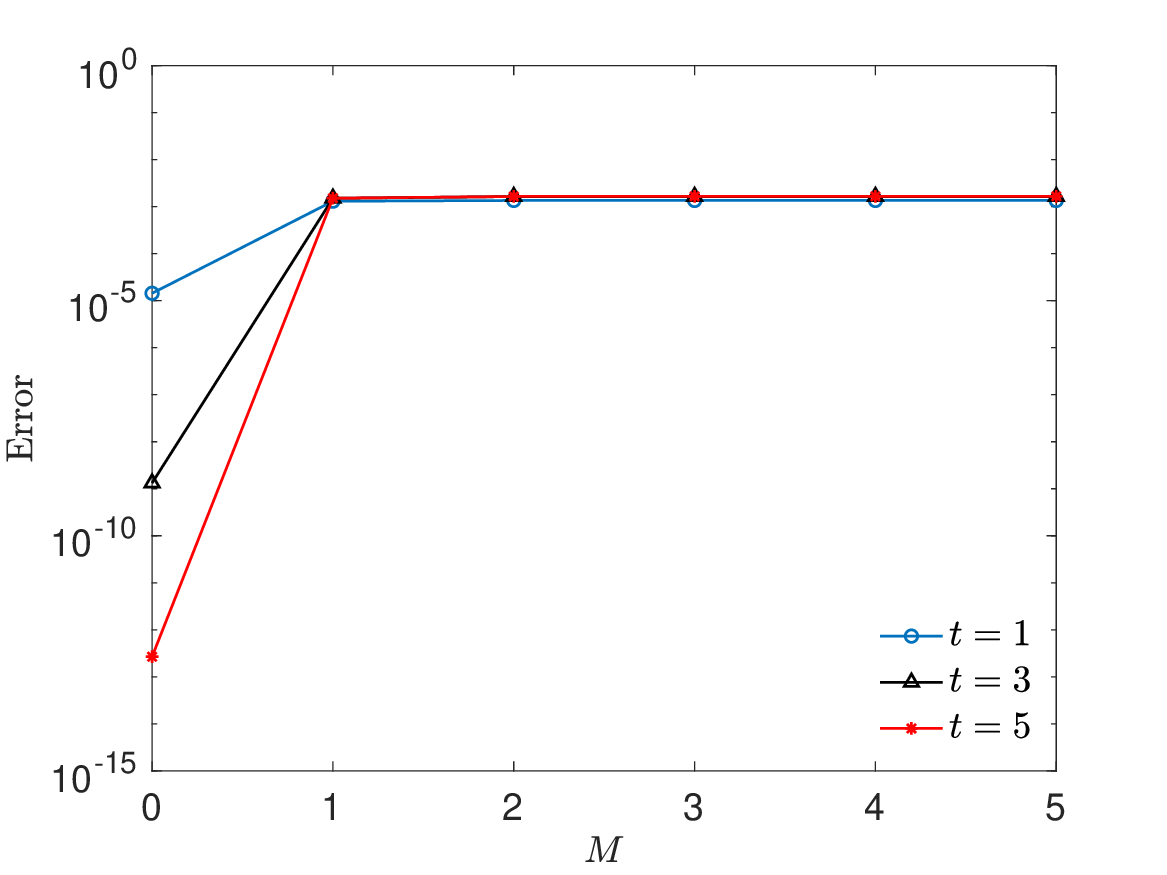}
\includegraphics[scale = 0.35]{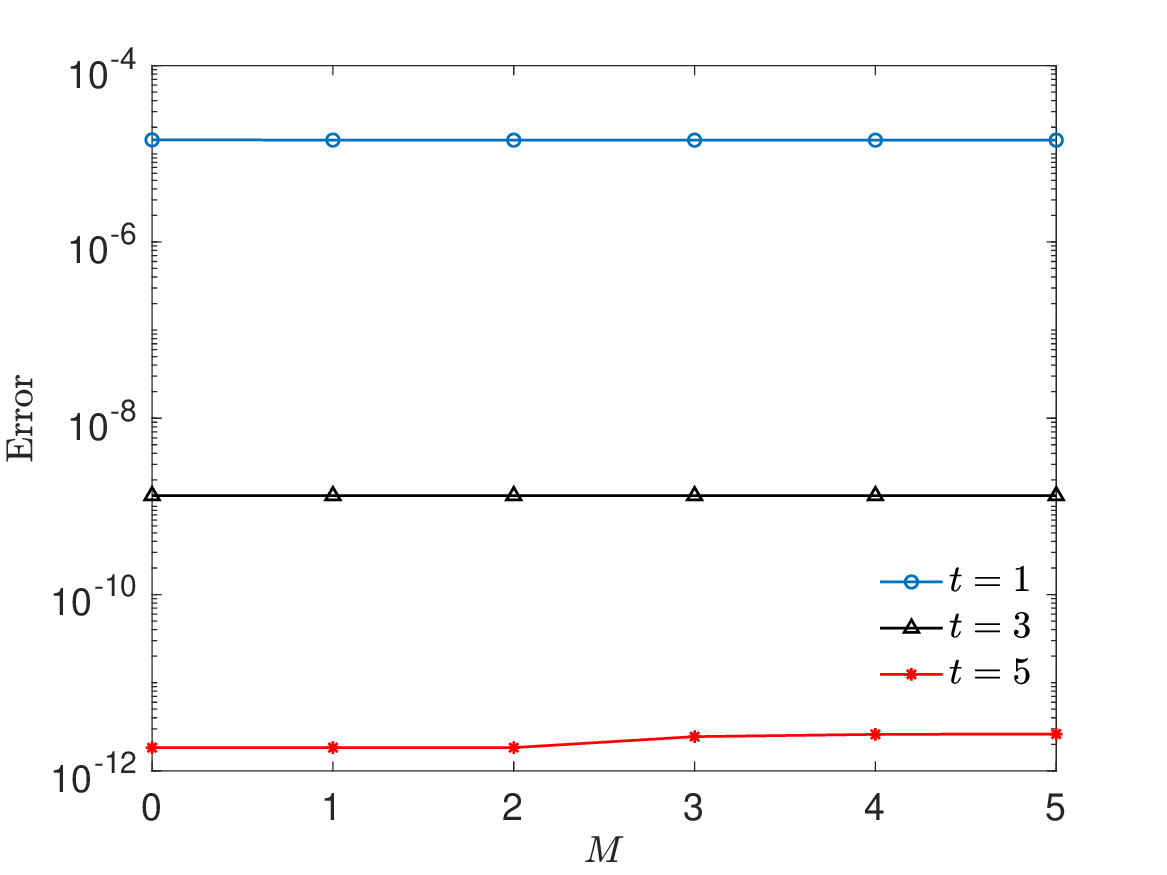}
\caption{{\textbf{Test 1(a)}. Evaluation of the absolute $L^1$ error for the uncertain Fokker-Planck equation \eqref{eq:FP_T11} obtained using standard sG (left) and micro-macro sG (right) using central differences at three different times  $t = 1$, $t = 3$ and $ t = 5$. We consider as reference solution the gPC expansion of the analytical one evaluated with $M = 50$. }}
\label{fig:err_T11_num}
\end{figure}

\subsubsection*{Case (b). Uncertain initial temperature}\label{subsect:12}
We consider a Fokker-Planck model with uncertain initial temperature $\sigma(\z)>0$
\begin{equation}
\label{eq:FP_T12}
\partial_t f(\z,v,t) = \partial_v \left[ vf(\z,v,t) + \sigma(\z)\partial_v f(\z,v,t)\right], \qquad v \in \mathbb R, \sigma(\z)>0
\end{equation}
and complemented with uncertain initial distribution $f_0(\z,v)$ such that $\int_{\mathbb R}v^2 f_0(\z,v) = \sigma(\z)$. Proceeding like in Appendix \ref{appA} we obtain the exact solution defined in \eqref{eq:exact_T1}. It is easy to observe that for $t \rightarrow +\infty$ the equilibrium distribution depends on the uncertainties of the system
\[
f^\infty(\z,v) = \dfrac{1}{\sqrt{2\pi\sigma(\z)}}\exp\left\{ -\dfrac{v^2}{2\sigma(\z)}\right\}.
\]

We consider $\sigma(z) =  \frac{1}{2}(z+1)+\frac{1}{2}$, where $z \sim \mathcal U([-1,1])$. In Figure \ref{fig:EV_T12}  we represent the evolution of $\mathbb E[f(\z,v,t)]$ and $\textrm{Var}(f)(\z,v,t)$, being $f(\z,v,t)$ the exact solution of the problem  \eqref{eq:FP_T12}. The statistical quantities have been approximated through a gPC expansion with $M = 5$. Unlike the test in Section \ref{subsect:11} the variance does not vanishes since the large time distribution depends on the uncertainties of the system.

In Figure \ref{fig:err_T12} we show the evolution of $\textrm{Var}(\| f\|_{L^1(\mathbb R)})$ for $M = 1,2,3$ (left plot) and the evolution of $\epsilon_{\textrm{var}}$ defined in \eqref{eq:L1}. Therefore, for an increasing $M>0$ the gPC expansion keeps a spectral accuracy in the approximation of statistical quantities of $f(z,v,t)$ also for large times. 

{Subsequently, in Figure \ref{fig:err_T12_num} we report the evolution of the relative $L^1$ error  obtained from the standard sG scheme and the micro-macro sG approach for the Fokker-Planck equation \eqref{eq:FP_T12}. We considered as reference value for the variance the one obtained from the analytical solution of the problem with $M = 50$. In both methods the derivatives in the physical space are solved by standard central differences. We may observe how, using a standard sG scheme, the error saturates in time whereas the micro-macro sG approach yields spectral accuracy for large times.   }

\begin{figure}
\centering
\includegraphics[scale = 0.35]{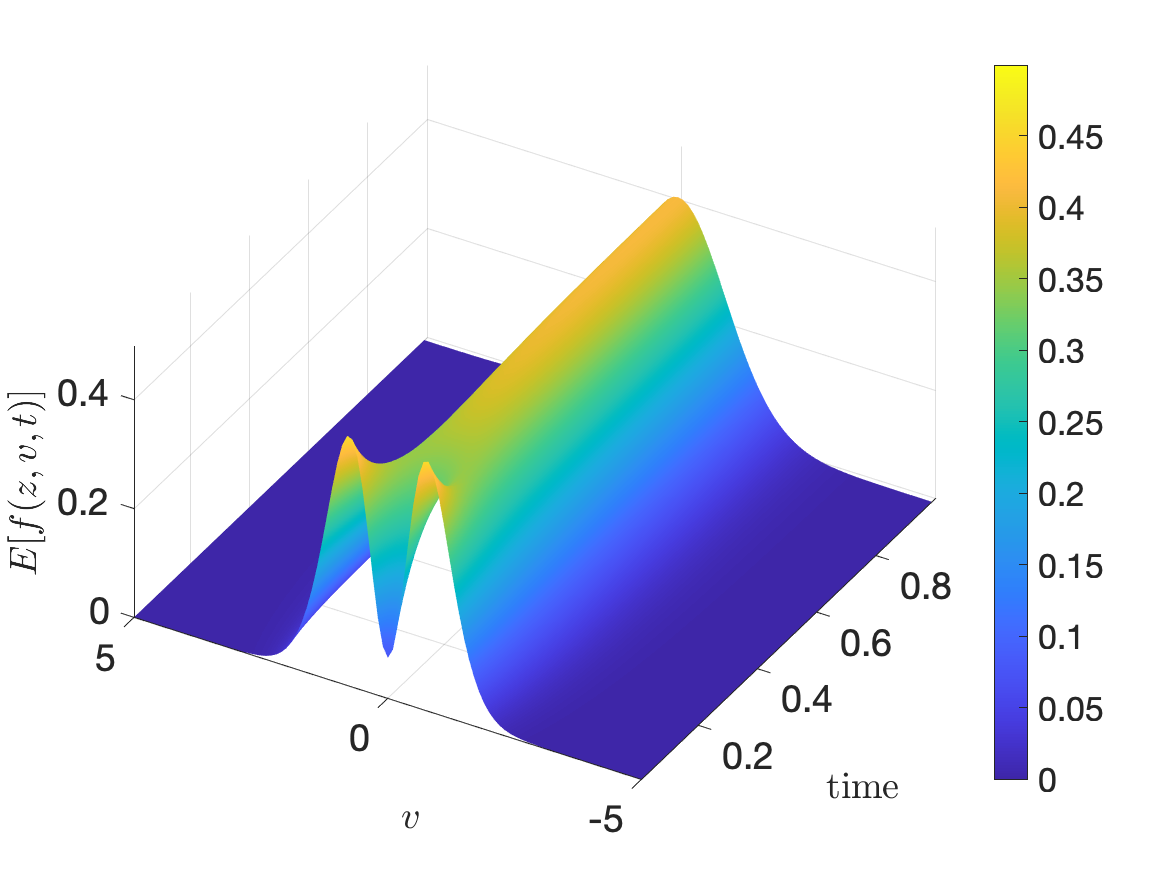}
\includegraphics[scale = 0.35]{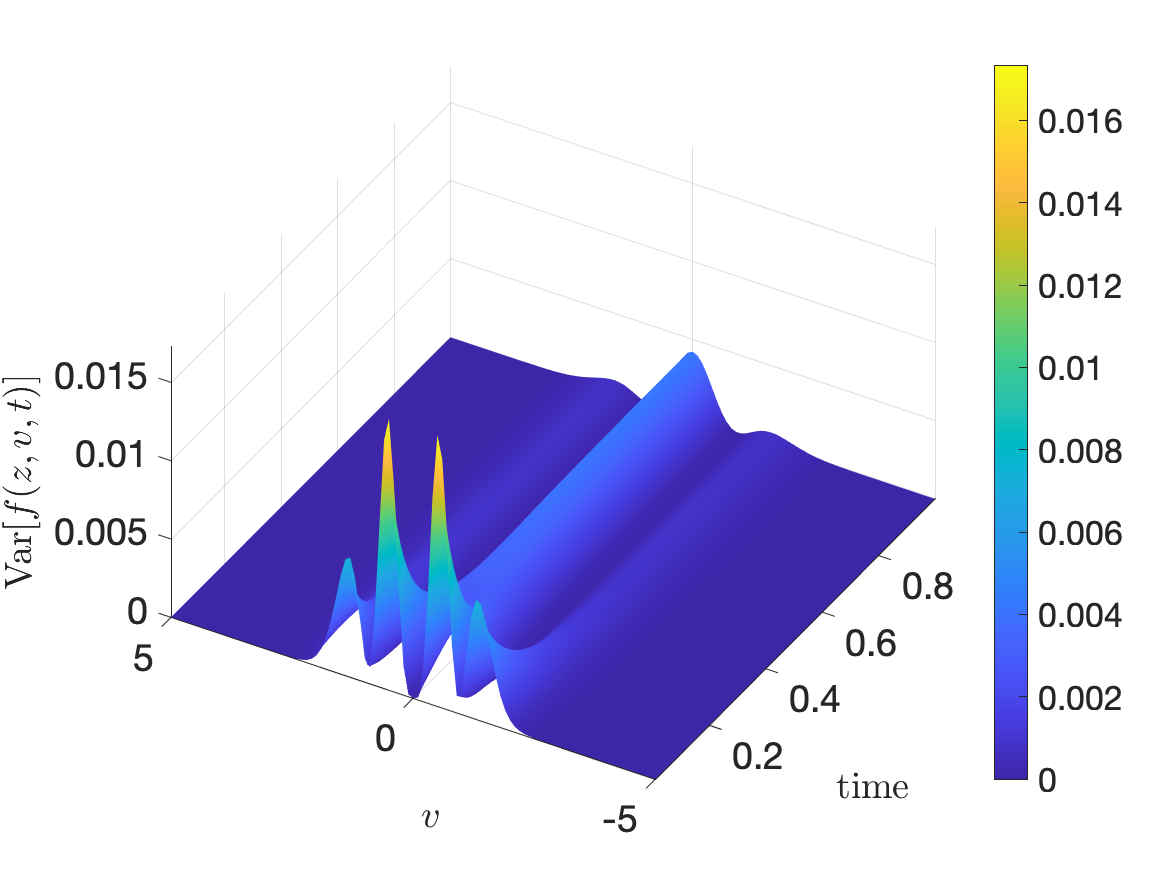}
\caption{\textbf{Test 1(b)}. Evolution of $\mathbb E_{\z}[f(\z,v,t)]$ and $\textrm{Var}[f(\z,v,t)]$, $f(\z,v,t)$ is exact solution of \eqref{eq:FP_T12}. The statistical quantities have been determined with a gPC expansion with $M = 5$. }
\label{fig:EV_T12}
\end{figure}

\begin{figure}
\centering
\includegraphics[scale = 0.235]{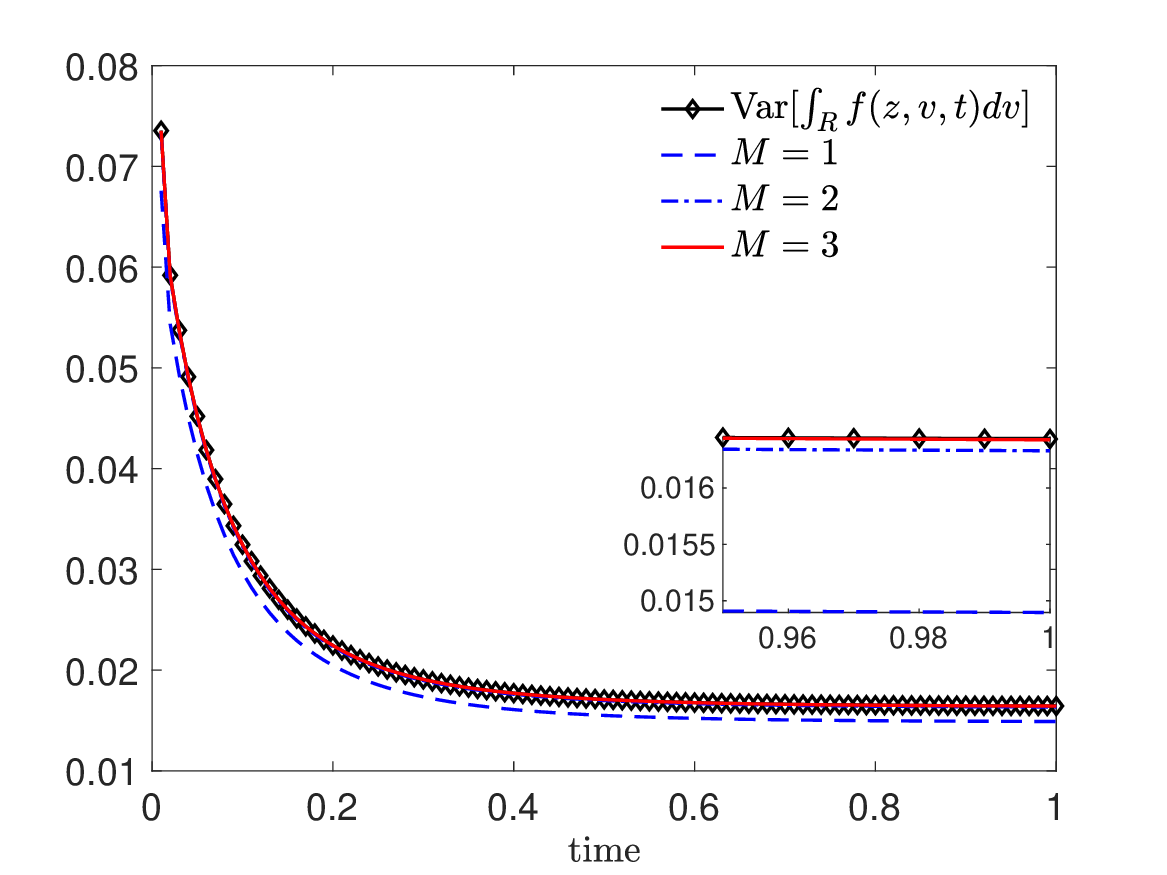}
\includegraphics[scale = 0.235]{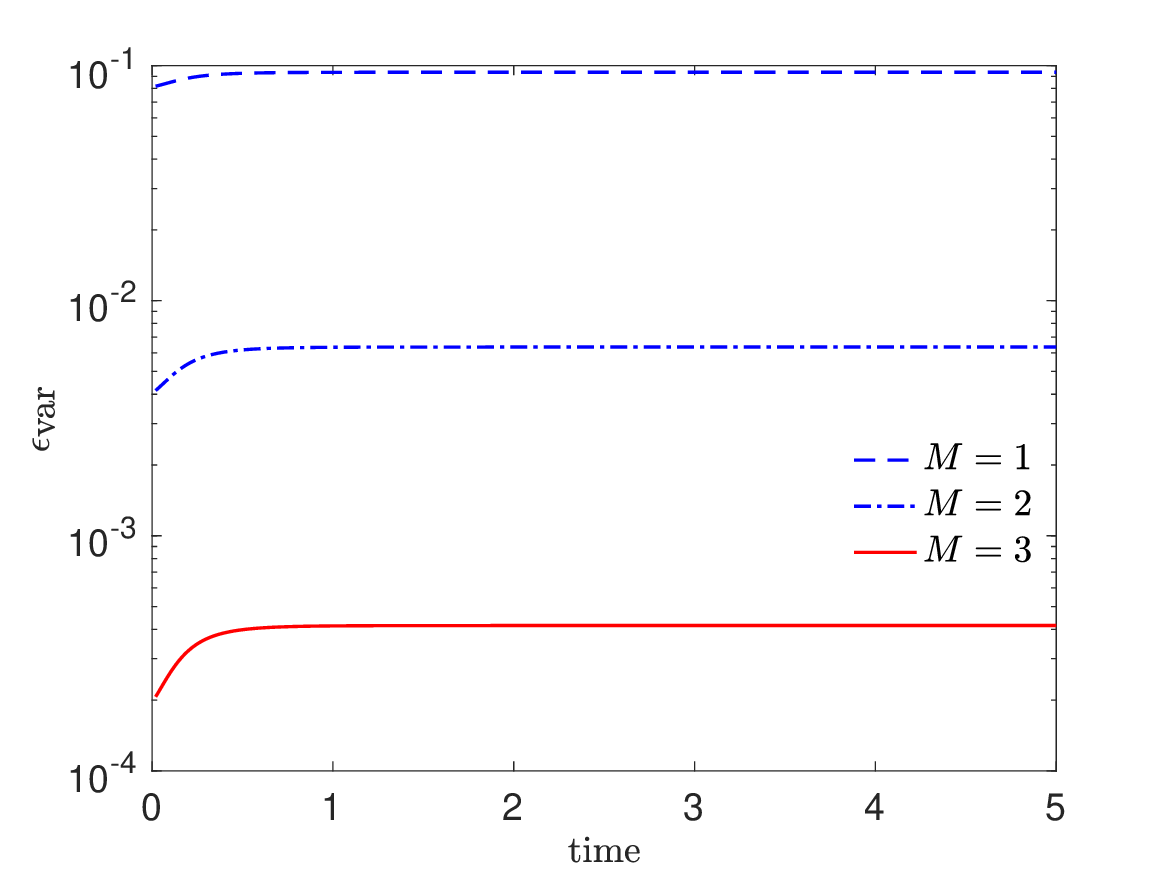}
\includegraphics[scale = 0.235]{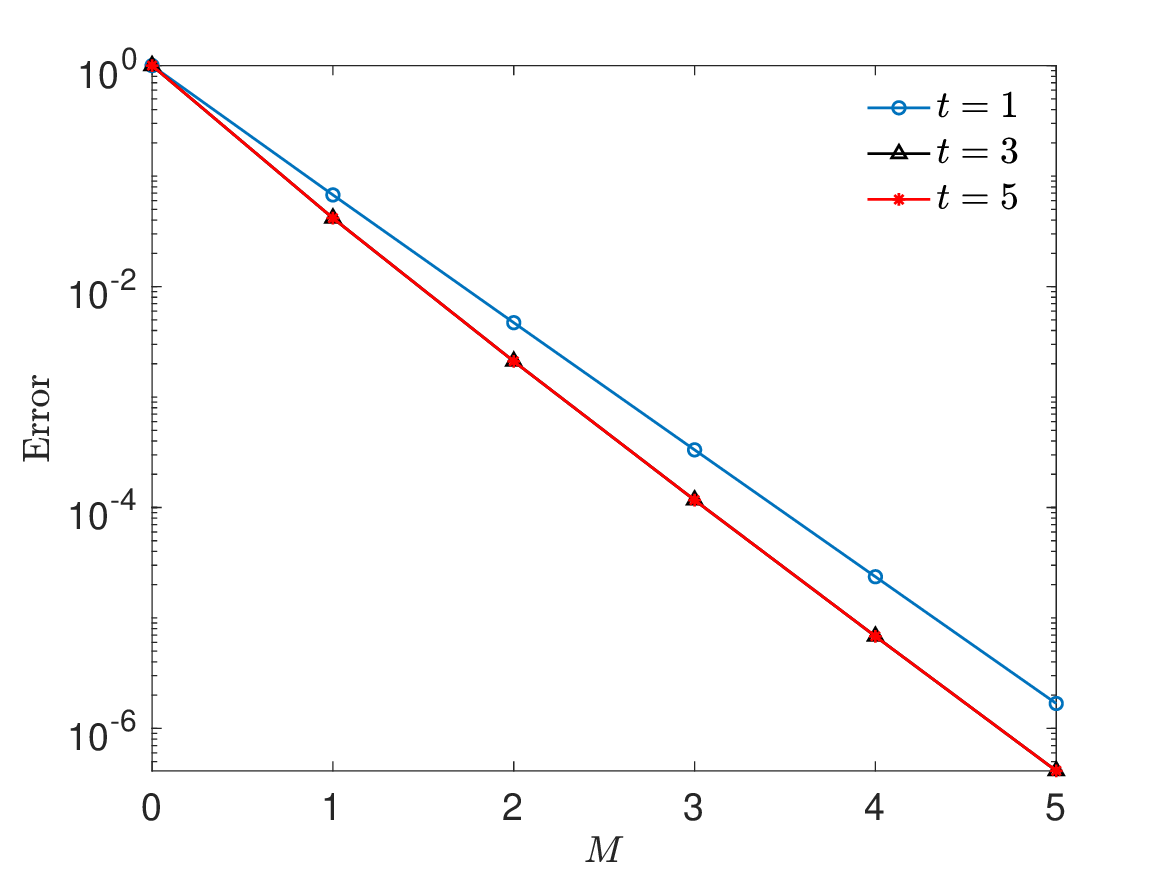}
\caption{\textbf{Test 1(b)}. {Evolution of $\textrm{Var}[\|f(\z,v,t) \|_{L^1(\mathbb R)}]$ and of its gPC approximation for $M=1,2,3$ (left), evolution of the relative $L^1$ error $\epsilon_{\textrm{var}}$ defined in \eqref{eq:L1} for $M = 1,2,3$ (center), gPC expansion error $\epsilon_{\textrm{var}}$ at three fixed time horizons $t = 1$, $t = 3$ and $ t = 5$ and computed for several $M>0$ (right).} }
\label{fig:err_T12}
\end{figure}

\begin{figure}
\centering
\includegraphics[scale = 0.35]{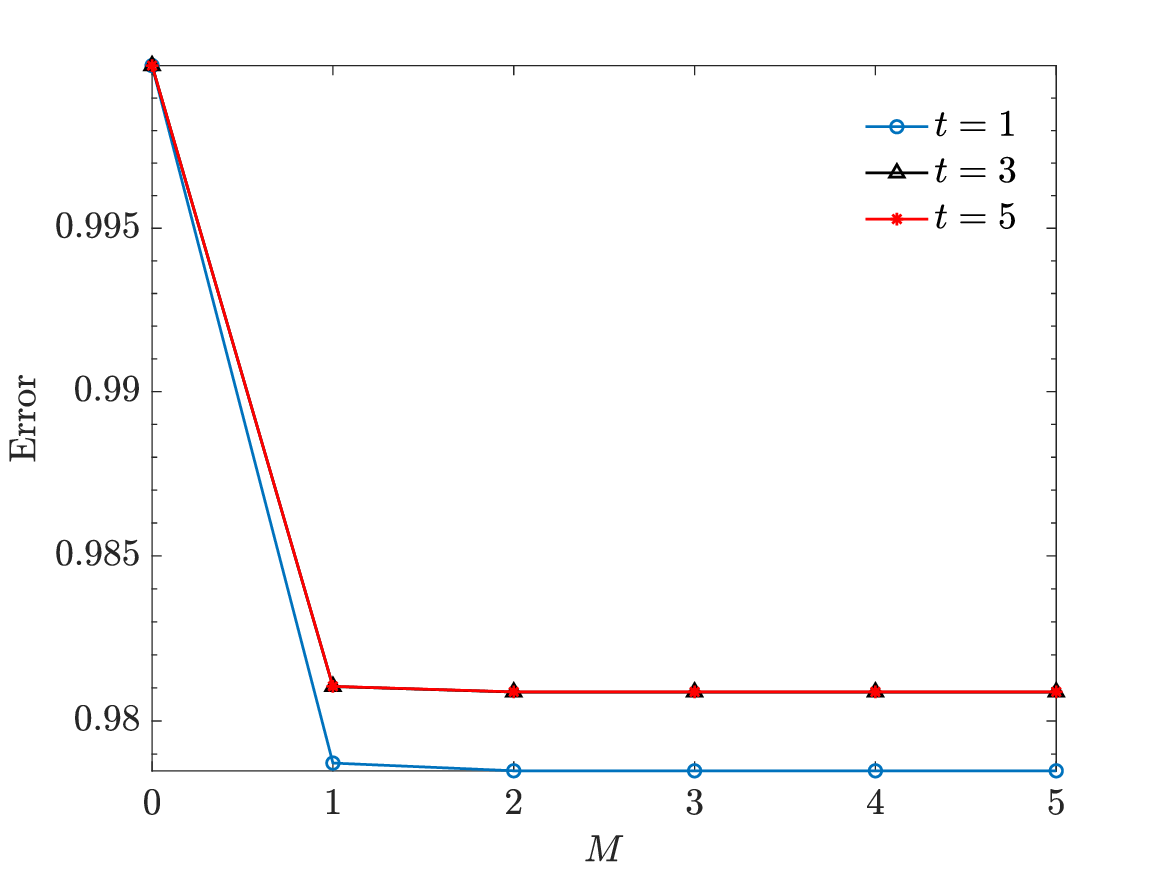}
\includegraphics[scale = 0.35]{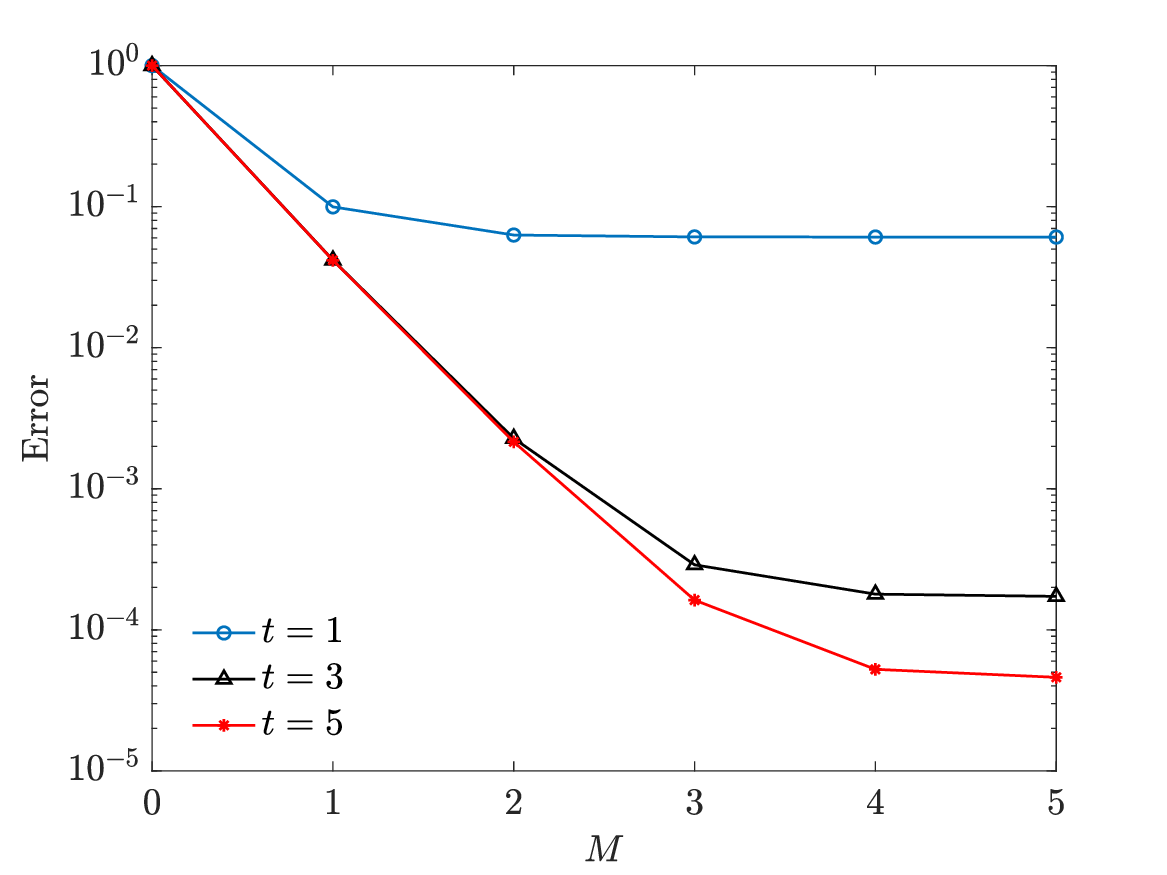}
\caption{{ \textbf{Test 1(b)}. Evaluation of the relative $L^1$ error for the uncertain Fokker-Planck equation \eqref{eq:FP_T12} obtained using standard sG (left) and micro-macro sG (right) using central differences at three different times  $t = 1$, $t = 3$ and $ t = 5$. We consider as reference solution the gPC expansion of the analytical one evaluated with $M = 50$. }}
\label{fig:err_T12_num}
\end{figure}

{
\subsubsection*{Case (c).  2D uncertainty}\label{subsect:multiD}
In this section we test the performance of the introduced MMsG in the case of a 2D uncertainty $\z = (z_1,z_2)$, $z_1 \in I_1 \subset \mathbb R$, $z_2 \in I_2 \subset \mathbb R$, with independent components such that  $z_1\sim p_1(z_1)$ and $z_2\sim p_2(z_2)$. If $\{\Phi_h\}_{h=0}^{M_1}$ and $\{\Psi_r\}_{r=0}^{M_2}$ are the families of orthonormal polynomials of $z_1$ and $z_2$, respectively, with degree up to $M_1\ge 0$ and $M_2\ge 0$. For any $f$ sufficiently regular we consider its sG approximation 
\[
f_{M_1,M_2}(\z,v,t) = \sum_{h=0}^{M_1}\sum_{r=0}^{M_2}\hat{f}_{hr}(\v,t)\Phi_h(z_1)\Psi_r(z_2), 
\]
where $\hat{f}_{hr}(v,t) = \int_{I_1\times I_2} f(\z,v,t)\Phi_h(z_1)\Psi_r(z_2)p_1(z_1)p_2(z_2)dz_1\,dz_2$. 
We consider then a Fokker-Planck model characterized by  uncertain initial temperature $\sigma(z_1)>0$ and uncertain relaxation rate $K(z_2)>0$ given by 
\begin{equation}
\label{FP2D}
\partial_t f(\z,v,t) = K(z_2) \partial_v\left[ vf(\z,v,t) + \sigma(z_1)\partial_v f(\z,v,t)\right], \qquad v \in \mathbb R,
\end{equation}
and complemented by uncertain initial distribution $f_0(\z,v) = f_0(z_1,v)$ such that $\int_{\mathbb R}v^2 f_0(\z,v)dv = \sigma(z_1)$. In the following, we will consider $K(z_2)$ as in Case (a) and $\sigma(z_1)$ as in Case (b). The equilibrium distribution of model \eqref{FP2D} is 
\[
f^\infty(z_1,v,t) = \dfrac{1}{\sqrt{2\pi\sigma(z_1)}}\exp\left\{-\dfrac{|v|^2}{2\sigma(z_1)}\right\}.
\]
The exact time evolution of the introduced problem can be obtained as in Appendix \ref{appA}. The gPC expansion of model \eqref{FP2D} is given by 
\[
\partial_t \hat{f}_{hr}(v,t) = \partial_v \left[ \sum_{k=0}^{M_1}\sum_{\ell = 0}^{M_2}\mathcal K_{kh\ell r}\hat{f}_{k\ell}(v,t) + \sum_{k=0}^{M_1}\sum_{\ell = 0}^{M_2}\mathcal S_{kh\ell r}\partial_v f_{k\ell}(v,t)   \right],
\] 
being $$\mathcal K_{kh\ell r} = \int_{I_1\times I_2} K(z_2)\Phi_k(z_1)\Phi_h(z_1)\Psi_\ell(z_2) \Psi_r(z_2)p_1(z_1)p_2(z_2)dz_1\,dz_2,$$ and $$\mathcal S_{kh\ell r} = \int_{I_1\times I_2} K(z_2)\sigma(z_1)\Phi_k(z_1)\Phi_h(z_1)\Psi_\ell(z_2) \Psi_r(z_2)p_1(z_1)p_2(z_2)dz_1\,dz_2.$$ 
In Figure \ref{fig:2D} we report the evolution of the $L^1$ error for the variance in the $\log_{10}$ scale  obtained from a standard sG scheme and the micro-macro sG approach for the Fokker-Planck equation \eqref{FP2D}. As a reference value for the variance we considered the one obtained from the analytical solution of the problem with $M_1 = M_2 = 25$. In both methods the derivatives in the physical space are solved through a central difference scheme and $N = 81$ gridpoints in the velocity space. As before, we may observe how the error saturates in time for a standard sG scheme whereas the micro-macro sG approach provides spectral accuracy for large times. 
\begin{figure}\centering
\includegraphics[scale = 0.35]{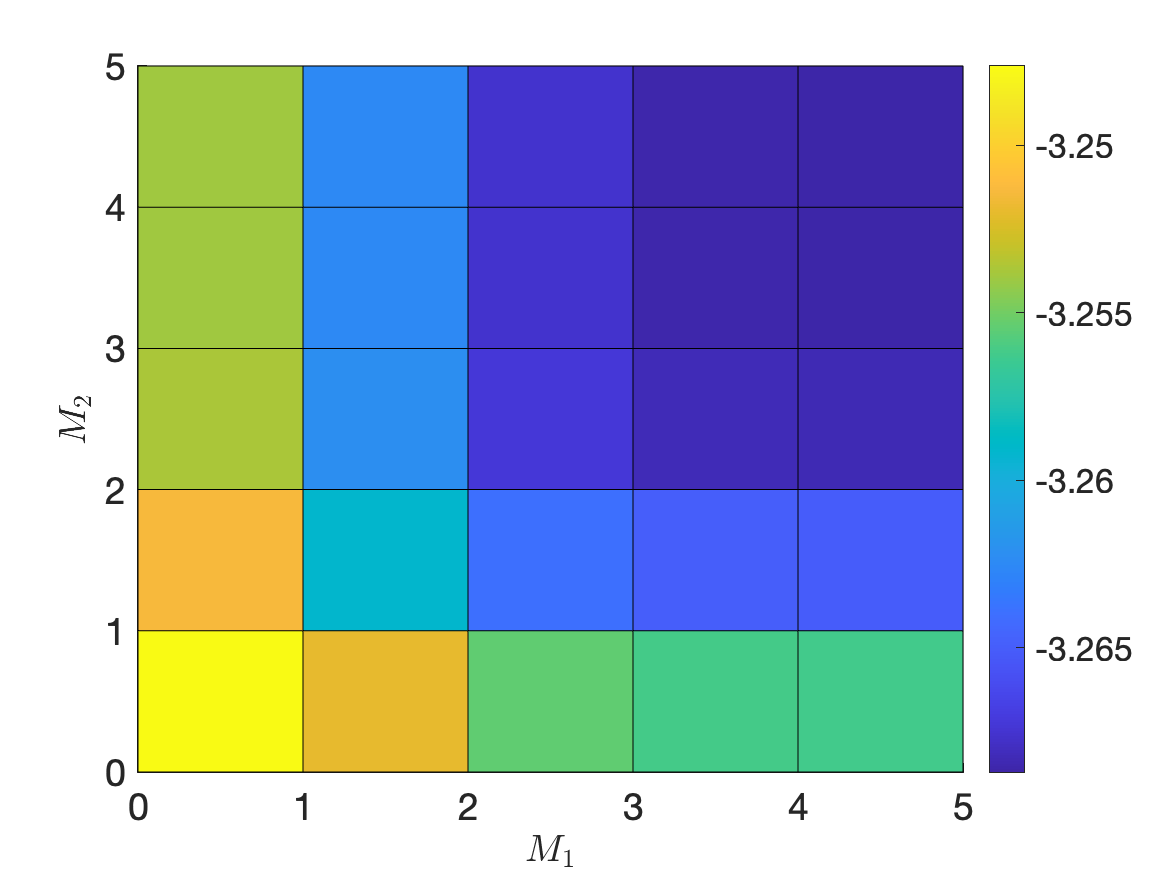}
\includegraphics[scale = 0.35]{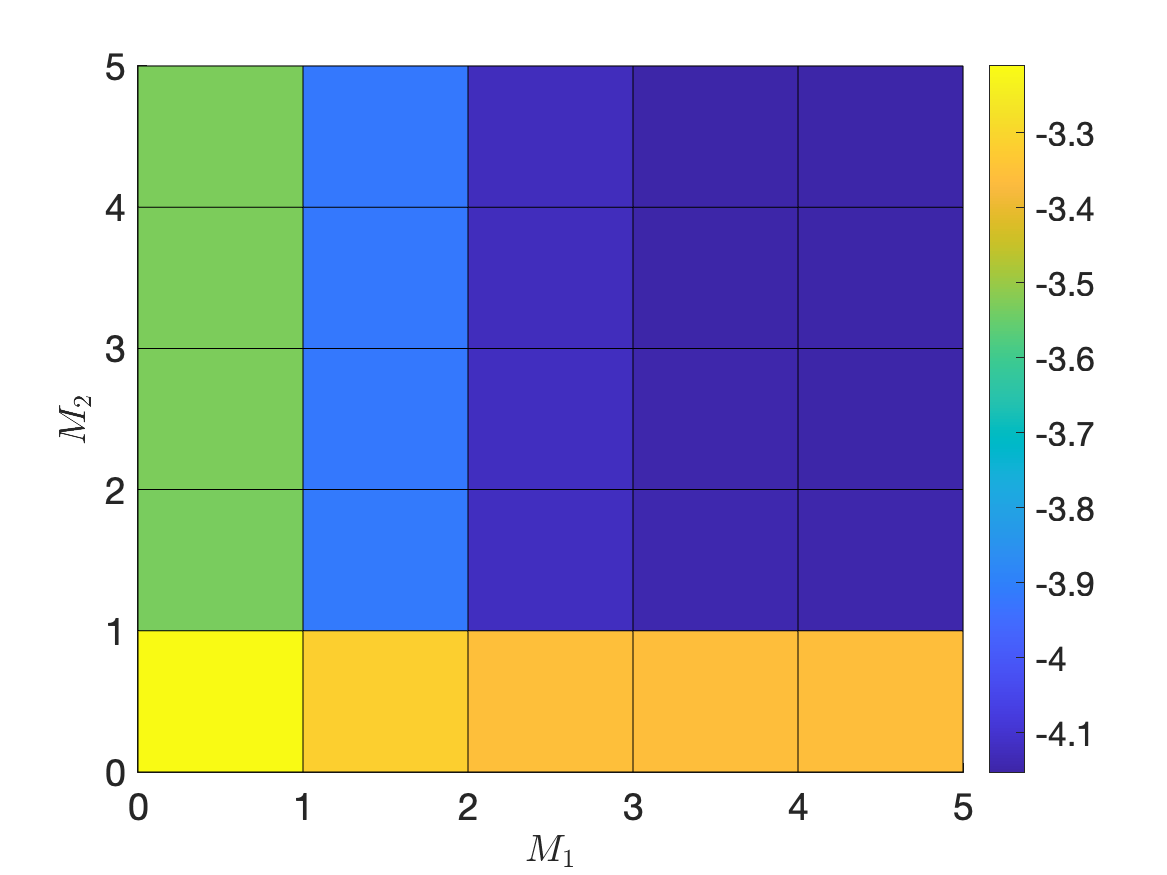}\\
\includegraphics[scale = 0.35]{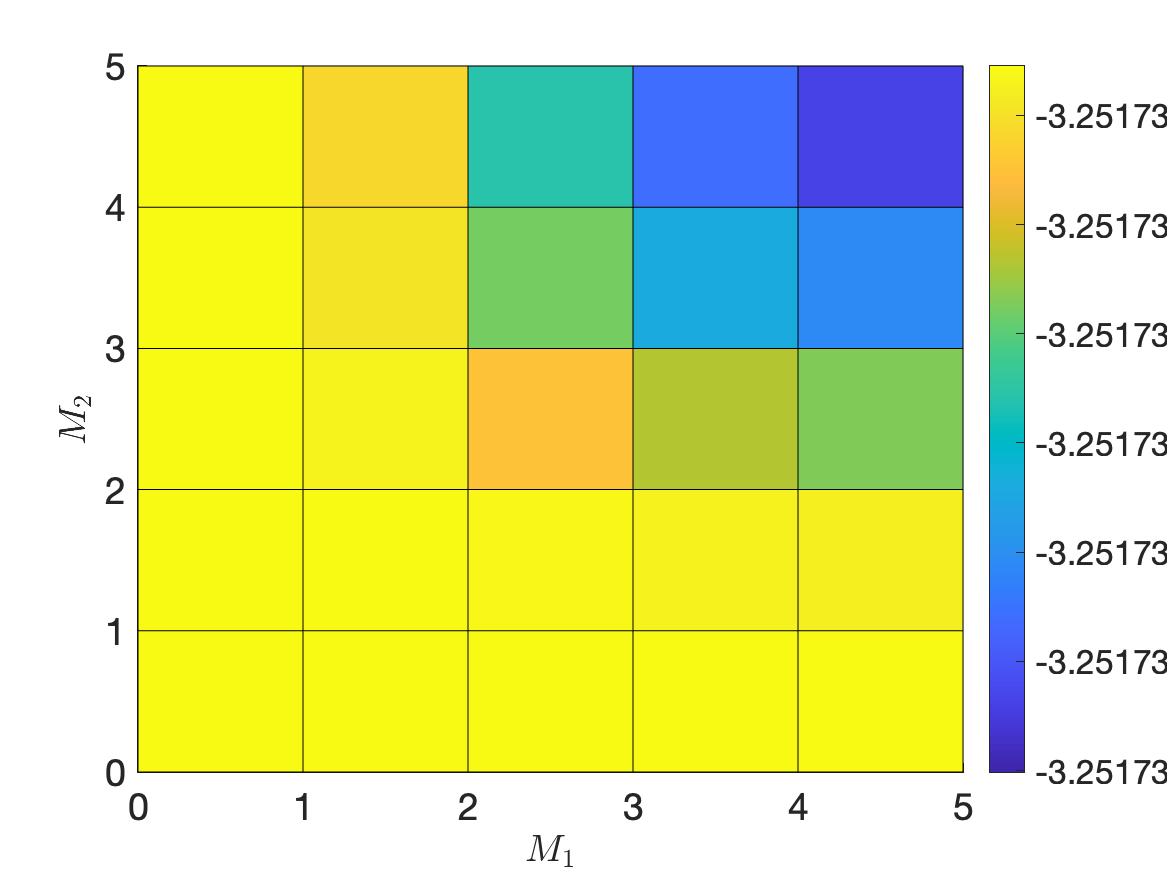}
\includegraphics[scale = 0.35]{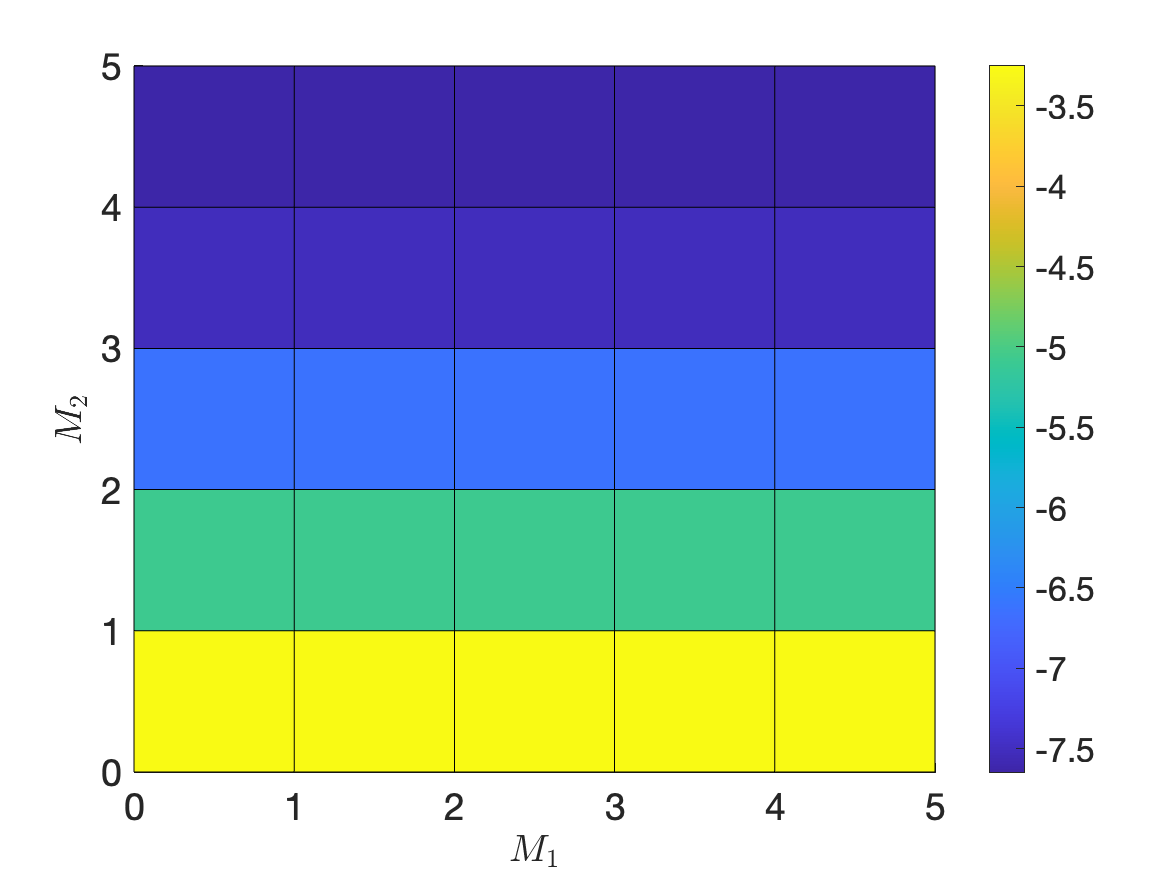}
\caption{{\textbf{Test 1(c)}. Evaluation of the $L^1$ error in $\log_{10}$ scale for the uncertain Fokker-Planck equation \eqref{FP2D} obtained using sG (left) and micro-macro sG (right) at time $t=1$ (top row) and $t = 10$ (bottom row). We used central difference for the derivates in the velocity space with $N = 81$ gridpoints. We consider as reference solution the gPC expansion of the analytical solution with $M_1 = M_2 = 25$.} }
\label{fig:2D}
\end{figure}
}

\subsection{Test 2: Explicit steady state in opinion dynamics}\label{sec:global_Max}

We consider a model describing the opinion formation through binary interactions already presented in Section \ref{sec:examples}. In this model we have that the evolution of the distribution function $f(z,v,t)$ is given by a Fokker-Planck equation with $v\in[-1,1]$ and defined by \eqref{eq:BD_opinion}. The explicit equilibrium state of the resulting Fokker-Planck equations is then given by \eqref{eq:exact_opinion}.

We consider an initial state of the form
\begin{equation}
\label{eq:f0_test1}
f_0(v) = C_0 \left( \exp \left\{ -\dfrac{(v-\mu_0)^2}{\sigma_0^2} \right\} +\exp\left\{ -\dfrac{(v+\mu_0)^2}{\sigma_0^2} \right\}\right),\qquad \mu_0 = \frac{1}{2},\,\sigma_0^2 = \dfrac{1}{20},
\end{equation}
with $C_0>0$ a normalization constant. In this case, the average opinion $u = \int_{-1}^1 vf(z,v,t)dv$ is conserved in time since no randomness is present on the initial distribution $f_0(v)$.

\begin{figure}
\centering
\includegraphics[scale=0.35]{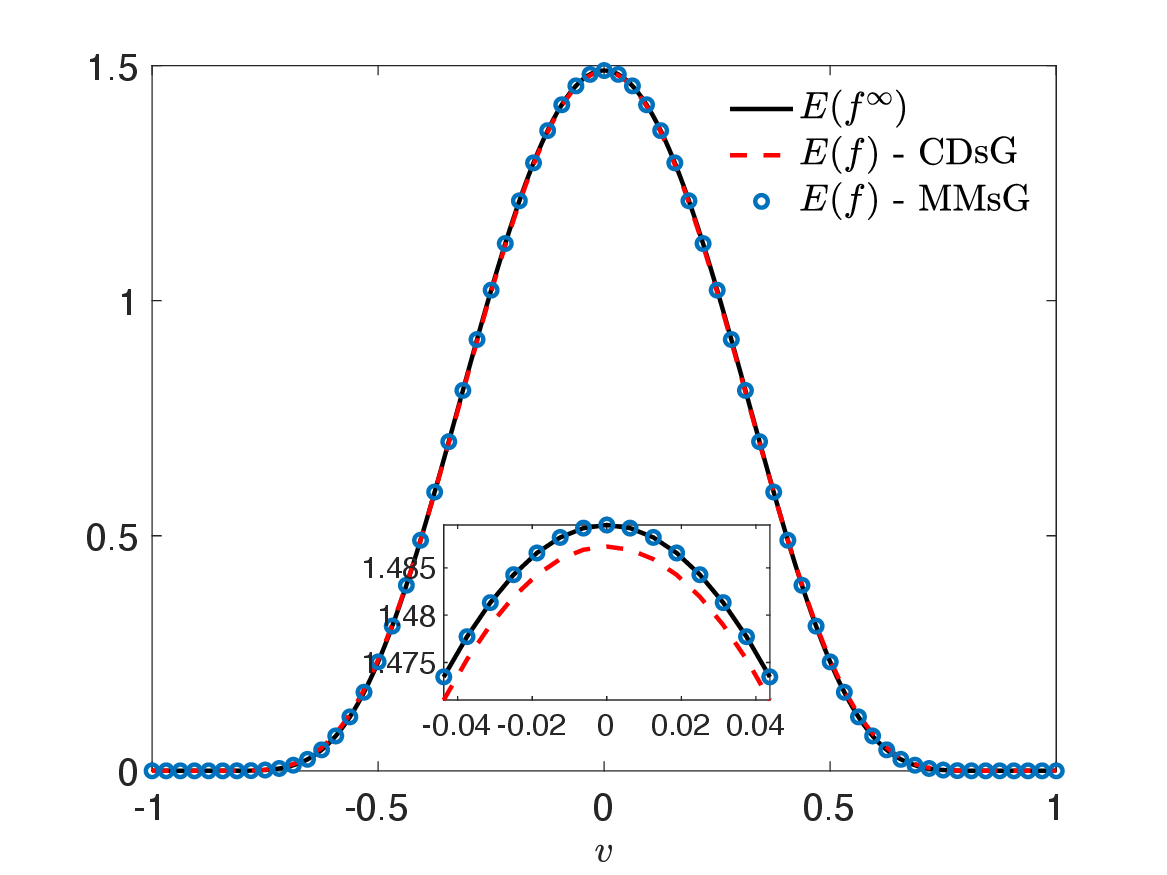}
\includegraphics[scale=0.35]{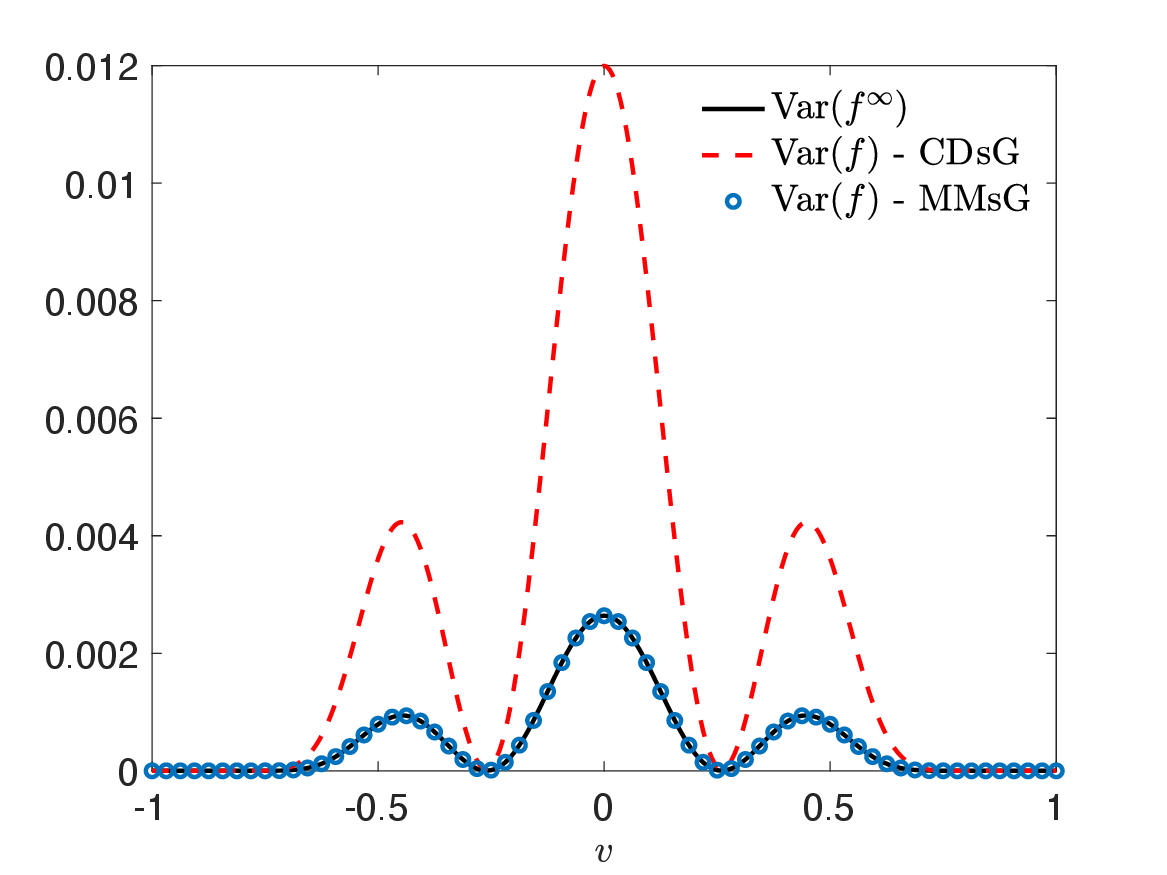}
\caption{ \textbf{Test 2}. Expected equilibrium (left) and its variance (right) for the central difference sG (CDsG) scheme and the micro macro sG scheme (MMsG) using $N = 41$ gridpoints and $M = 5$ projections for both methods.  {The numerical solution has been computed over the time interval $[0,T]$, $T = 15$.}}
\label{fig:op0}
\end{figure}

\begin{figure}
\centering
\includegraphics[scale=0.35]{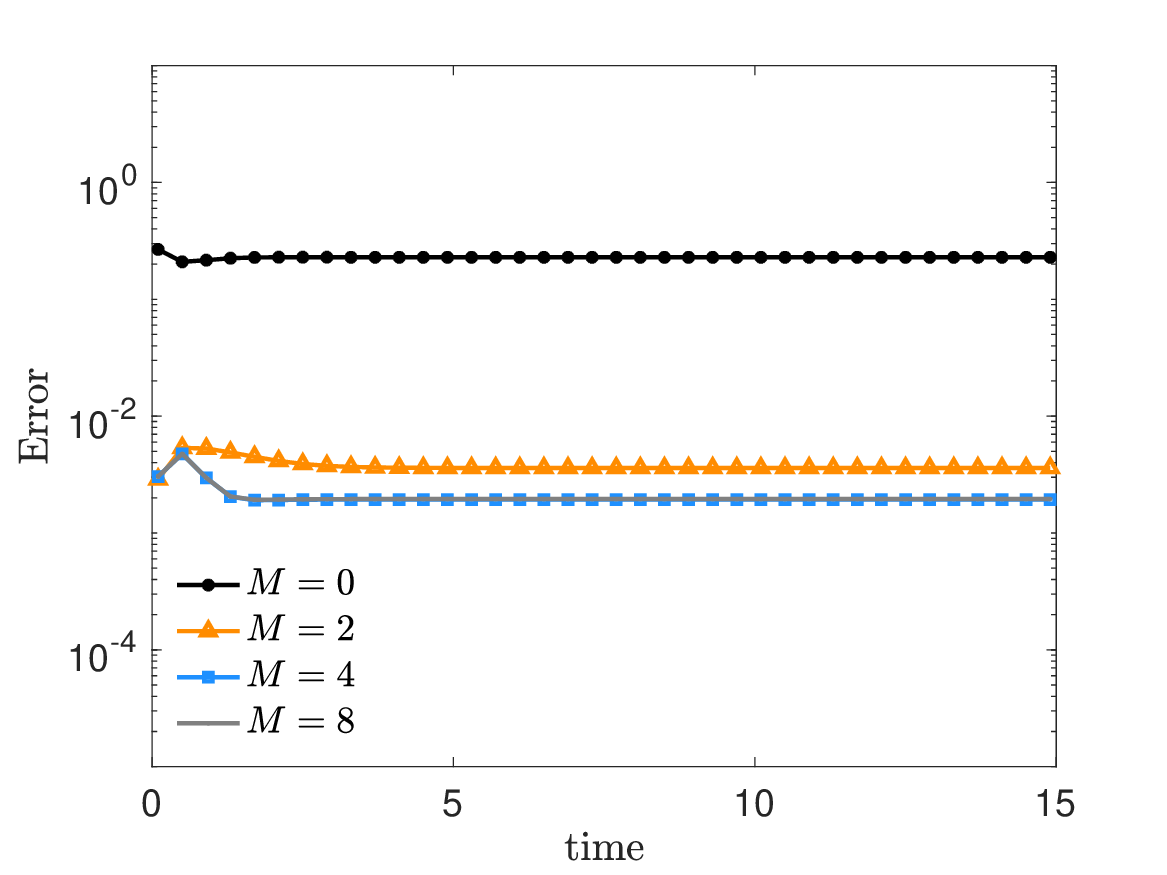}
\includegraphics[scale=0.35]{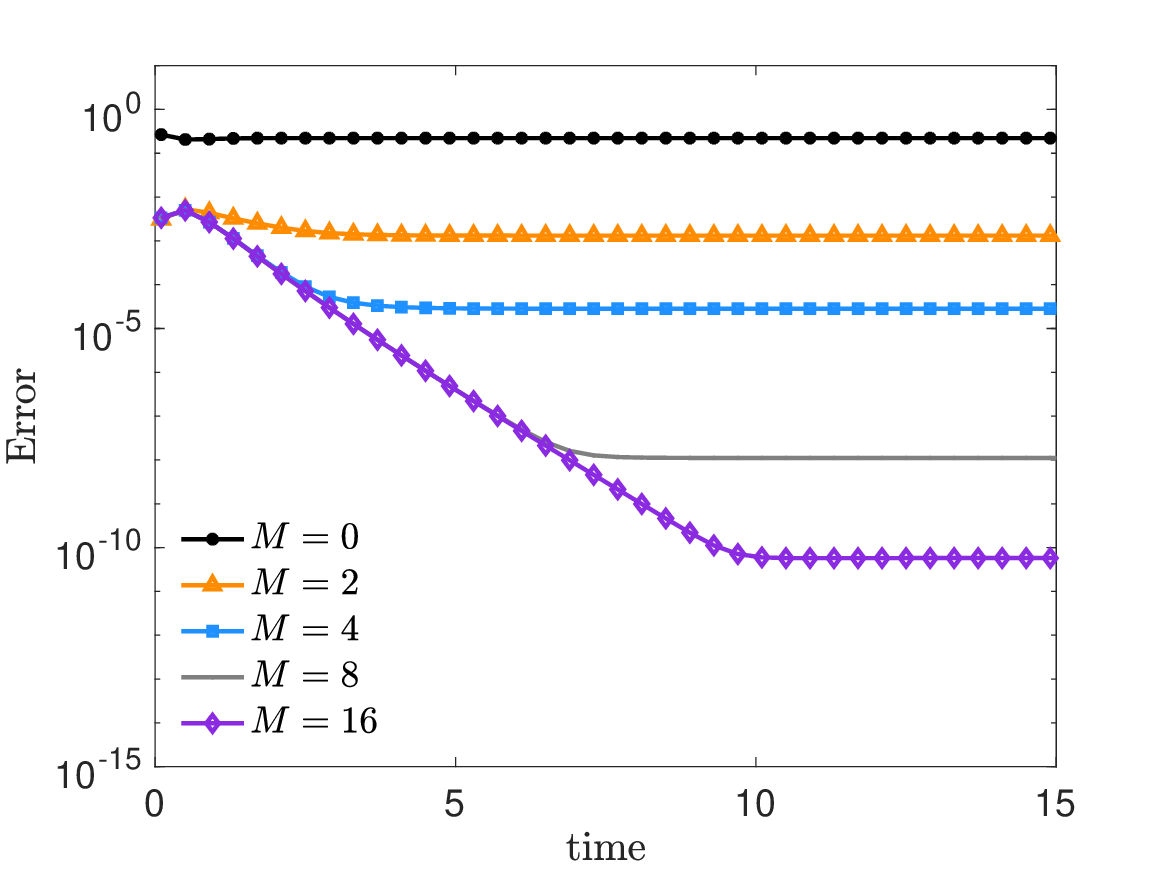}
\caption{ \textbf{Test 2}. $L^2$ error for the central difference sG scheme (left) and for the micro-macro sG scheme (right) with $N = 41$ gridpoints. The reference solution is computed using $M = 40$ and $N=321$ grid points.  }
\label{fig:op1}
\end{figure}
Since the problem is linear, for the time integration we adopted a second order DIRK implicit scheme over the time interval $[0,15]$ and time step $\Delta t = 10^{-1}$. We also consider  
$$\gamma(z)= \frac{3+z}{4}, \qquad z\sim \mathcal U([-1,1]),$$
and $\sigma^2 = 0.1$ in \eqref{eq:BD_opinion}. 

In Figure \ref{fig:op0}, we compare the large time distribution obtained for the standard sG scheme and the micro-macro sG scheme with the exact steady state solution given in \eqref{eq:exact_opinion}. We considered $M = 5$ modes for the stochastic Galerkin approximation and we compare expectation and variance of the distributions. {A Legendre polynomial basis in the random space and a second order implicit DIRK scheme for the time integration with $\Delta t = 10^{-1}$ have been used}. It is easily observed that MMsG correctly catches the large time behavior of the problem with high accuracy while the standard sG method is limited by the second order accuracy of central differences in the physical space. In Figure \ref{fig:op1}, we present the evolution of the $L^2$ error over the time interval $[0,15]$ for the standard sG and for the MMsG schemes. The reference solution of the problem has been computed with $M=40$ modes for the gPC decomposition and $N= 321$ gridpoints for the discretization of the interval $[-1,1]$. We can easily observe how with a finite number of modes we dramatically improve the accuracy of the scheme for all times using the MMsG scheme. Furthermore, for large times a very high accuracy is reached as expected since the perturbation $g(z,v,t)$ converges towards zero.

\begin{figure}[h!]
	\centering
	\includegraphics[scale = 0.32]{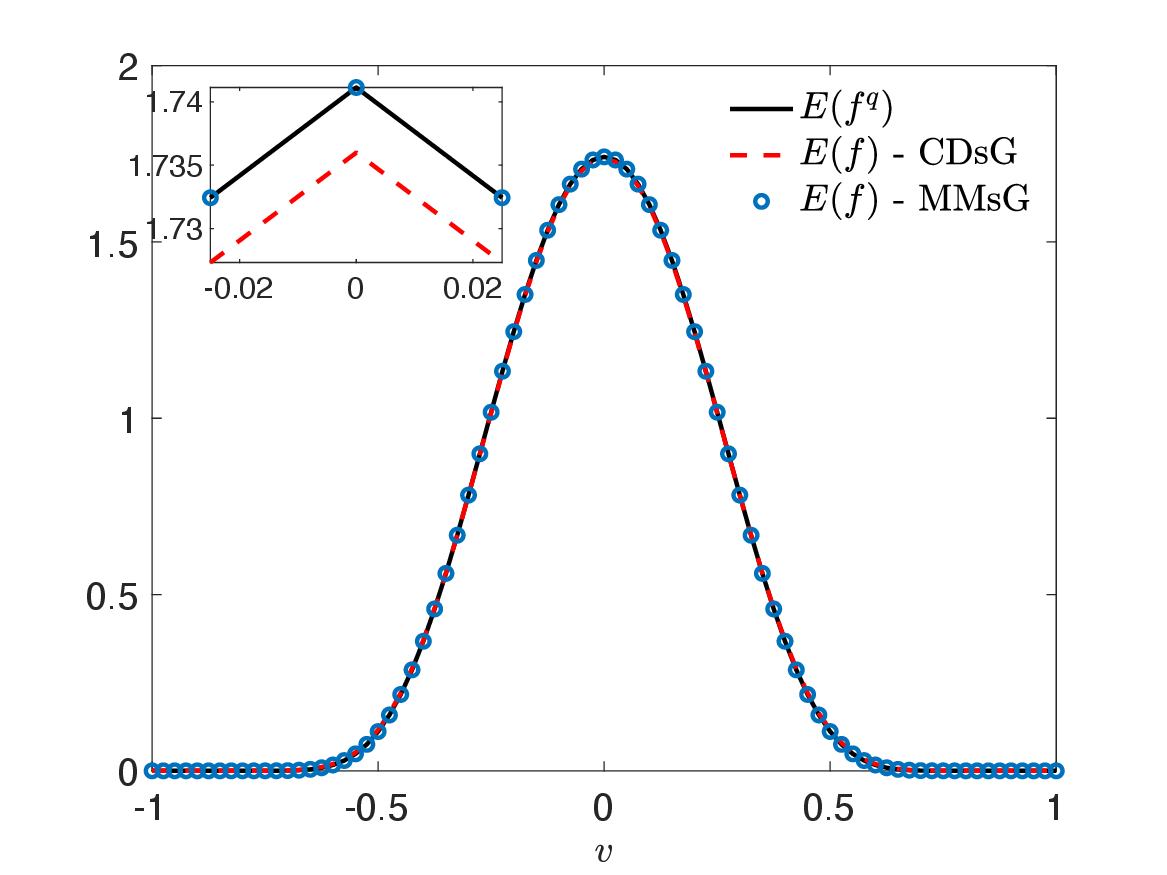}
	\includegraphics[scale = 0.32]{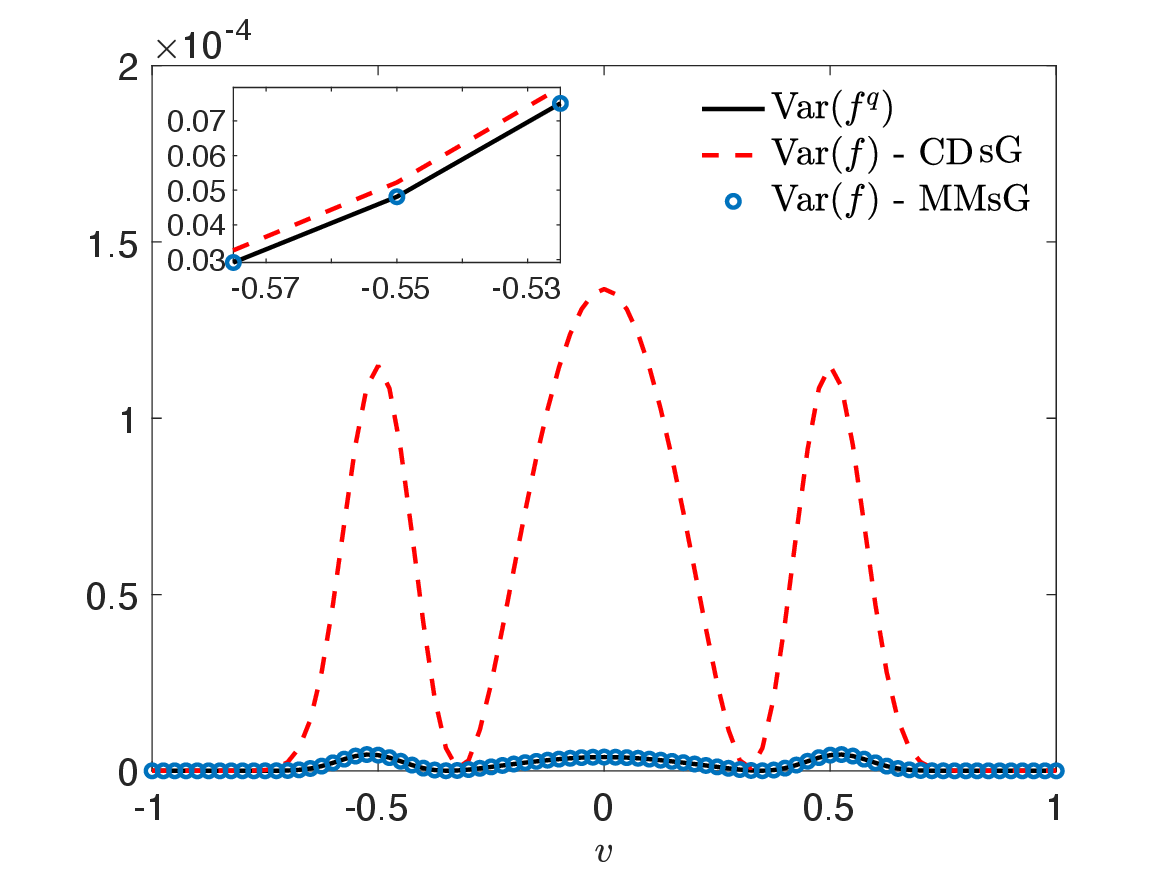} \\
	\includegraphics[scale=0.32]{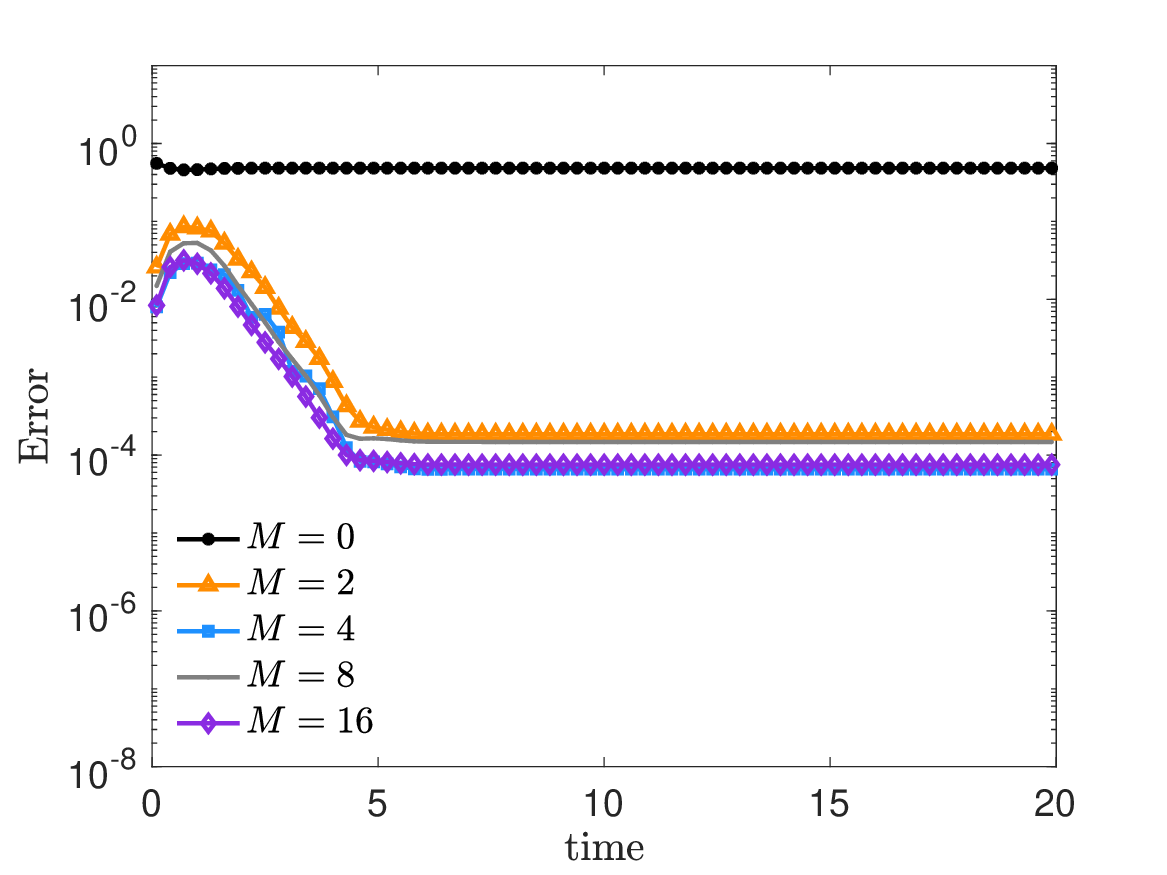}
	\includegraphics[scale=0.32]{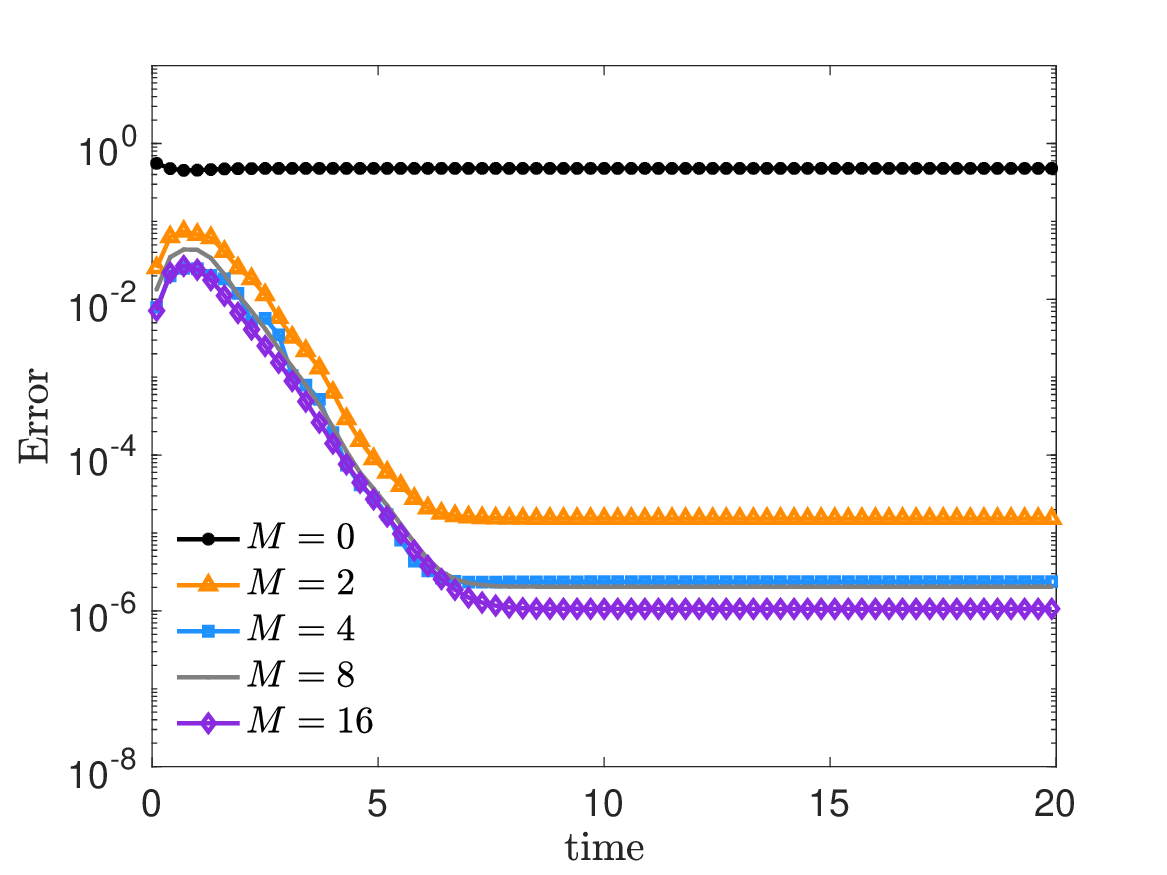}
	\caption{\textbf{Test 3}. Top row: large time distributions obtained from CDsG scheme and MMsG scheme in terms of expected distribution (left) and its variance (right) for the bounded confidence model with $\Delta(z)$ as in \eqref{eq:Delta}. Bottom row: $L^2$ error for CDsG (left) and MMsG (right). The reference solution is obtained with $M = 40$ modes and $N = 321$ gridpoints.}
	\label{fig:test2_1}
\end{figure}

\subsection{Test 3:  Quasi equilibrium states in bounded confidence dynamics}\label{test2}

In this part, we consider the evolution of the distribution $f(z,v,t)$, $v\in[-1,1]$ in the case of bounded confidence interactions which corresponds to consider the following nonlocal drift term
\be\label{eq:Bdef_BC}
\mathcal B[f](z,v,t) = \int_{-1}^1 P(z,v,v_*)(v-v_*)f(z,v_*,t)dv_*,\qquad P(z,v,v_*)=\chi(|v-v_*|\le \Delta(z))
\ee
where $\chi(\cdot)$ is the indicator function and $\Delta (z) \in[0,2]$ is an uncertain interaction threshold depending on $z\in I_{z}$. The diffusion function is of the form $D(v) = \sigma^2/2 (1-v^2)^2$. The same initial distribution $f_0(v)$ defined in \eqref{eq:f0_test1} is employed.

\begin{figure}[h!]
\centering
\subfigure[$\mathbb E(f)$]{
\includegraphics[scale=0.32]{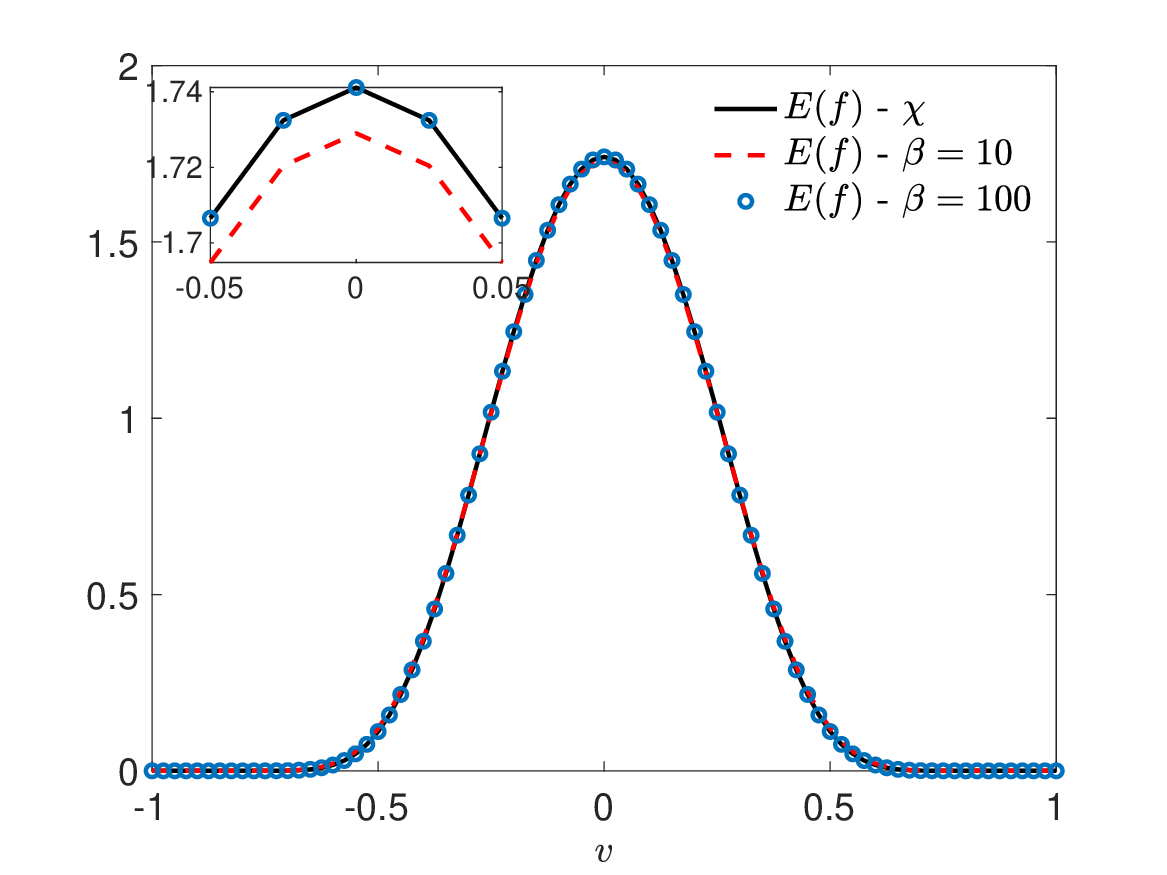}}
\subfigure[$\textrm{Var}(f)$]{
\includegraphics[scale=0.32]{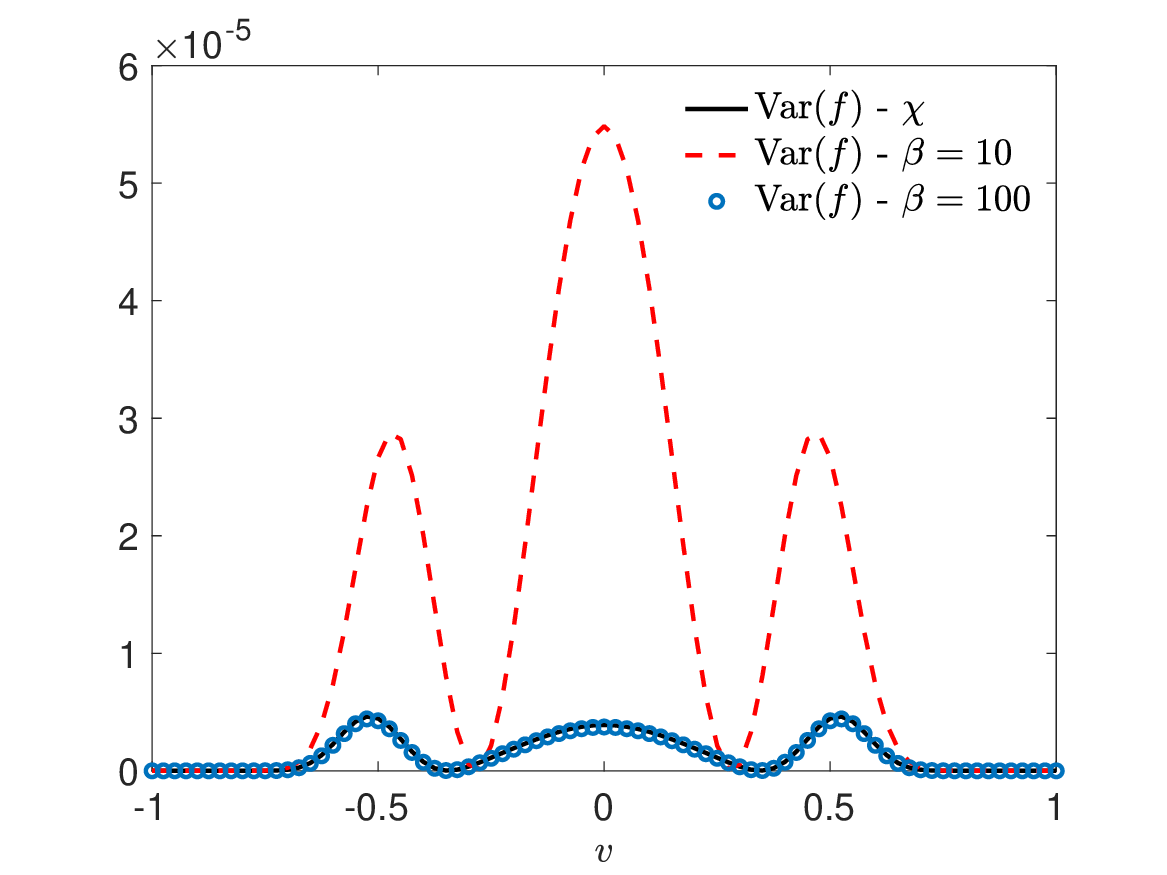}}\\
\subfigure[$\beta=10$]{
\includegraphics[scale=0.32]{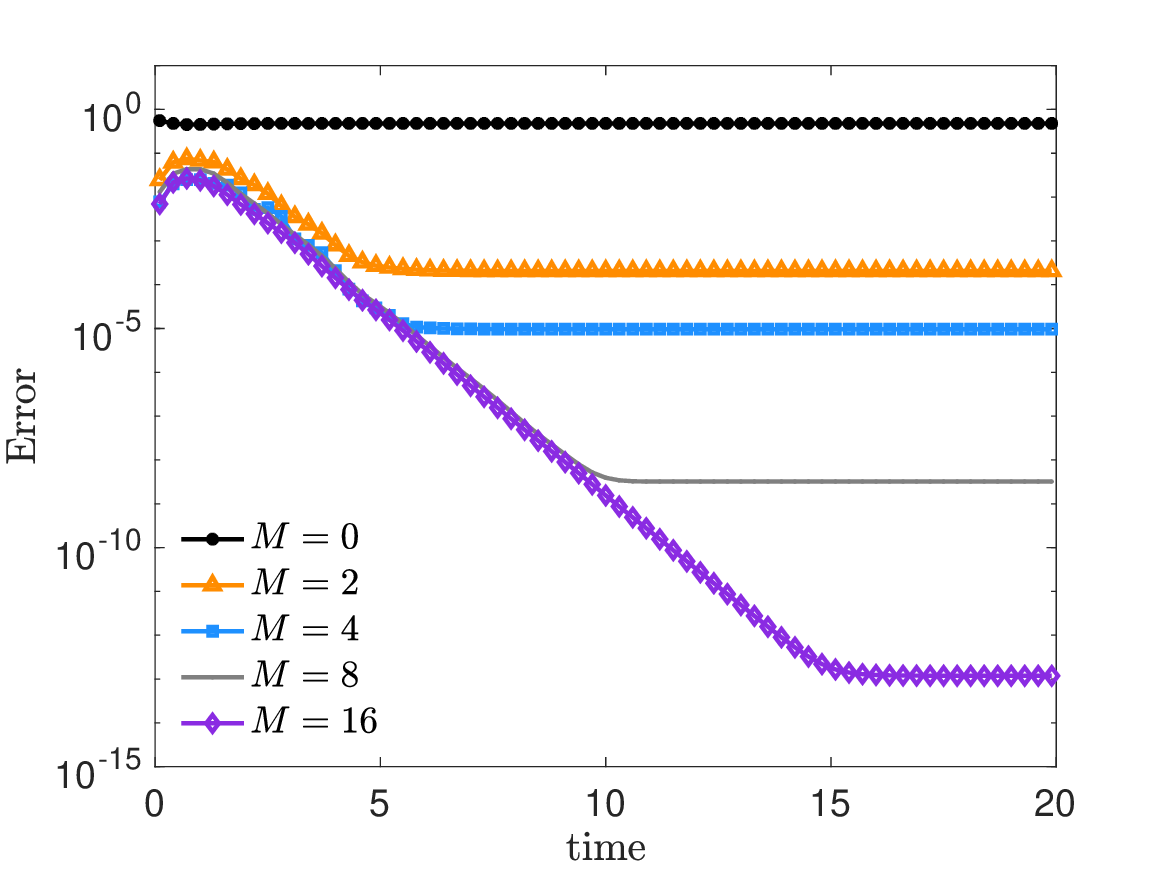}}
\subfigure[$\beta=100$]{
\includegraphics[scale=0.32]{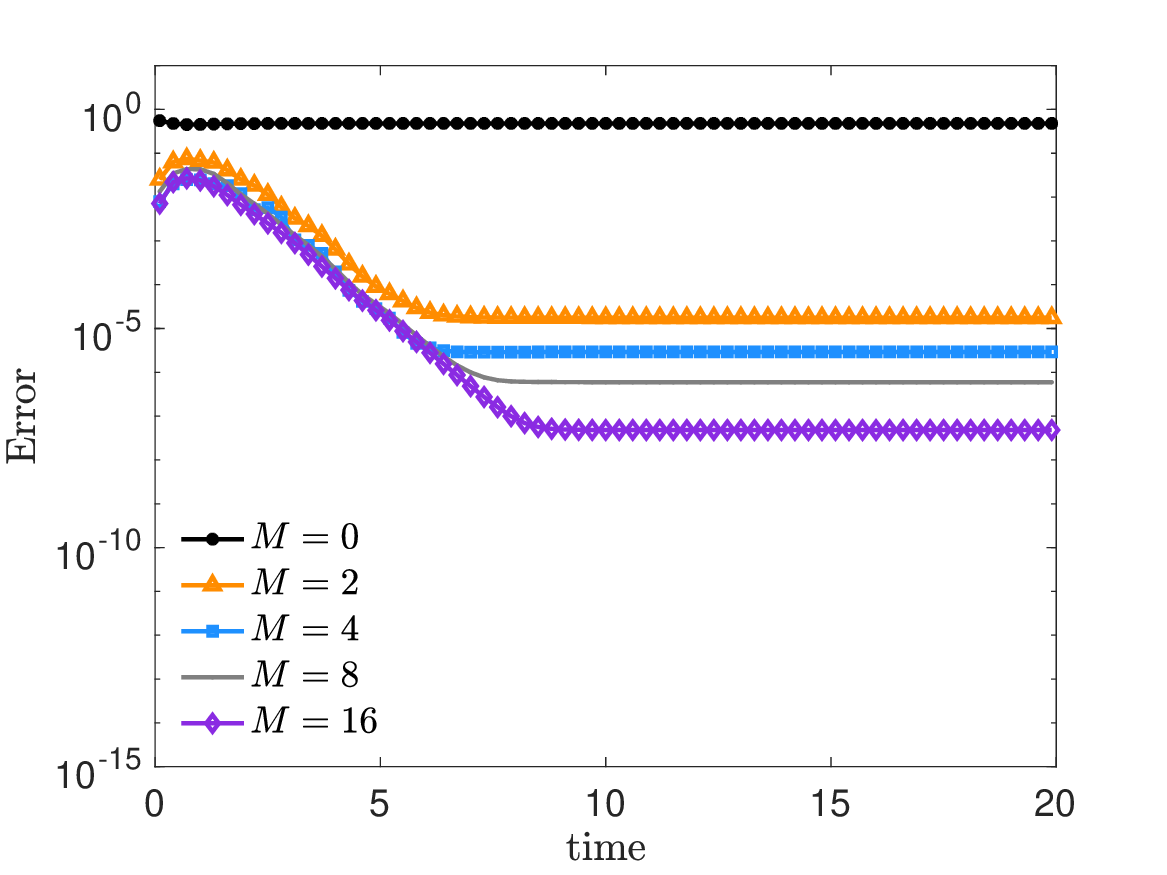}}
\caption{\textbf{Test 3}. Top row: expected distribution and variance obtained with MMsG scheme. The interaction function is defined through step function $\chi$ and with its approximation $K^\beta$, $\beta = 10,10^2$. Bottom row: $L^2$ error for the approximated bounded confidence model with $\beta = 10$ (left) and $\beta=10^2$ (right). }
\label{fig:BC_beta}
\end{figure}

We remark how in this case, due to the nonlinearity in the interactions, the equilibrium state is not known explicitly. Hence we will exploit here the quasi-equilibrium micro-macro formulation of the scheme as detailed in \eqref{eq:MM_ref}. The quasi-equilibrium state has been computed by trapezoidal quadrature whereas all derivatives in the physical space by central differences. {We considered a second order semi-implicit scheme for the time integration with $\Delta t = 10^{-1}$ and a Legendre polynomial basis in the random space.} 
 
The following random interaction threshold is taken
\be
\label{eq:Delta}
\Delta(z)  = 1 + \dfrac{z+1}{4}, \qquad z\sim \mathcal U([-1,1]).
\ee
In Figure \ref{fig:test2_1}, we present the expected distribution and its variance at time $T=20$ obtained through the CDsG scheme and MMsG scheme with $N=81$ and a second order semi-implicit time integration with $\Delta t = 10^{-1}$ (we refer to^^>\cite{PZ2018} for details about the semi-implicit scheme). In the bottom row, we present the evolution of the $L^2$ error computed with respect to a reference solution obtained with $M = 40$ modes and $321$ points in physical space. Thanks to the MMsG scheme we gain two orders of accuracy with respect to the CDsG scheme. Note, however, that spectral accuracy of the MMsG method is lost. This behavior is essentially due to the discontinuity of the function $\chi(\cdot)$ in \eqref{eq:Bdef_BC} which causes a loss of regularity in the solution and consequently the loss of accuracy in the numerical approximation. In order to avoid this problem we introduce a continuous approximation of the step function through sigmoid functions which regularize the solution. Specifically, we consider
\[
K^\beta(z,v-v_*) = \dfrac{1}{(1 + \exp\left( -\beta(v-v_* + \Delta(z))\right))} \cdot\dfrac{1}{(1 + \exp\left(-\beta(-v+v_* + \Delta(z))\right))}, \qquad \beta>0.
\]

For decreasing $\beta> 0$ now regularity is higher and, as expected, the resulting equilibrium is consistently approximated permitting to recover the spectral convergence of the scheme as $\beta>0$ decreases. The convergence is deteriorated for $\beta\gg0$, see Figure \ref{fig:BC_beta} for the details.

\subsection{Test 4. Swarming with self-propulsion}

\begin{figure}
\centering
\subfigure[$\alpha = 2, t = 0$]{
\includegraphics[scale=0.26]{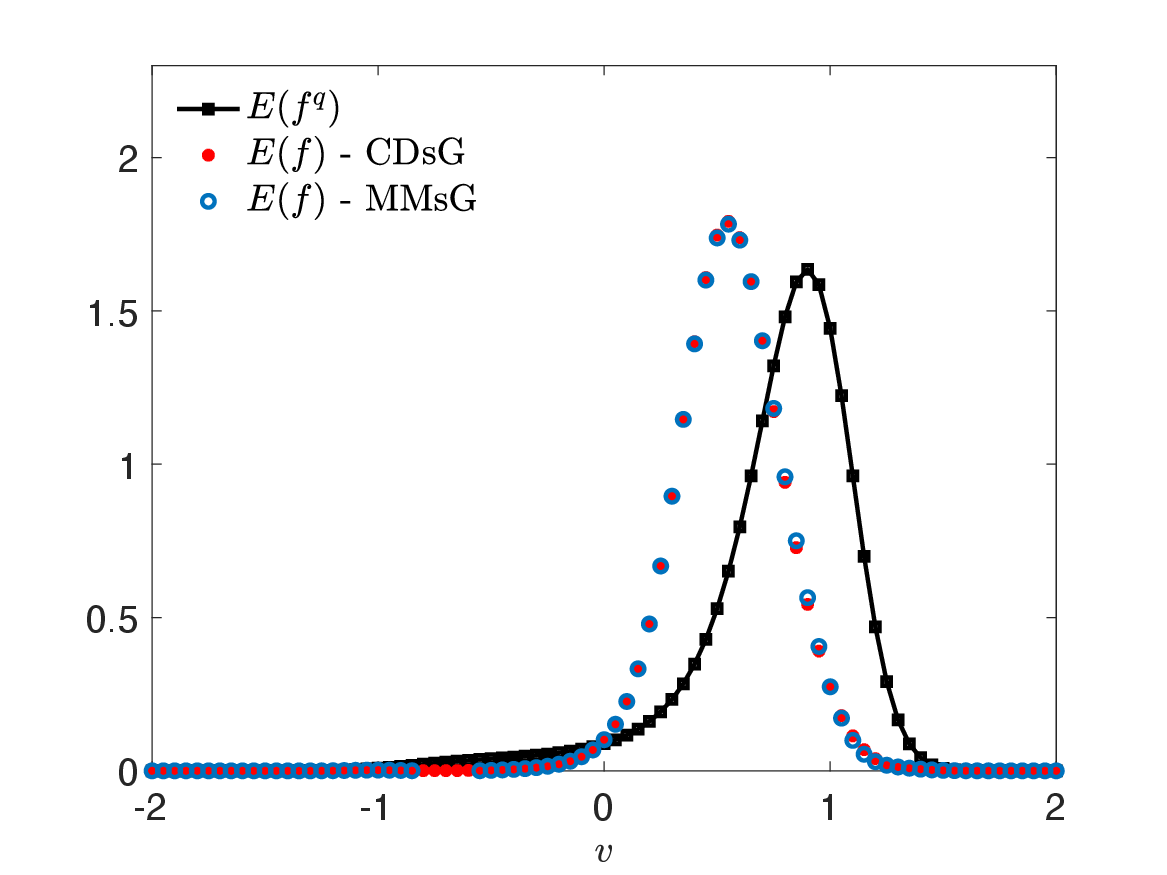}\hspace{-0.5cm}}
\subfigure[$\alpha = 2, t = 1$]{
\includegraphics[scale=0.26]{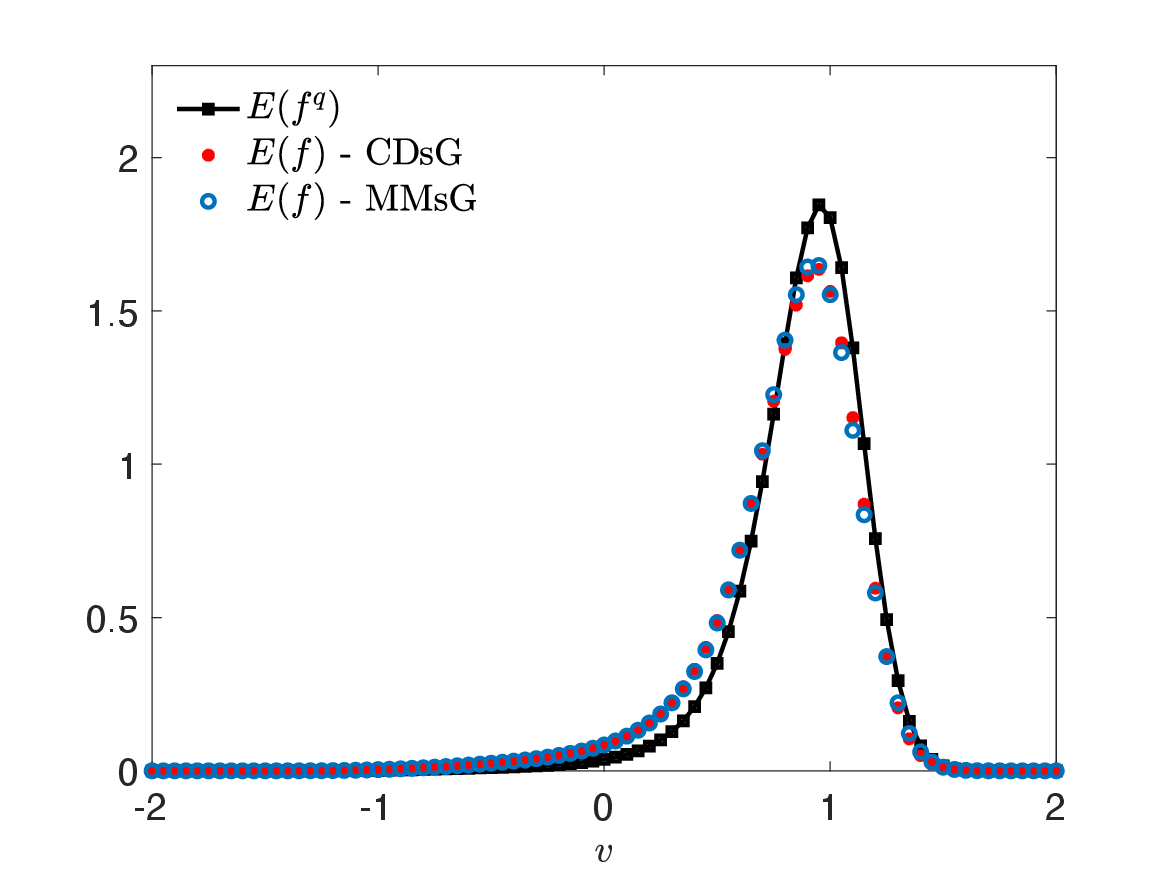}\hspace{-0.5cm}}
\subfigure[$\alpha = 2, t = 10$]{
\includegraphics[scale=0.26]{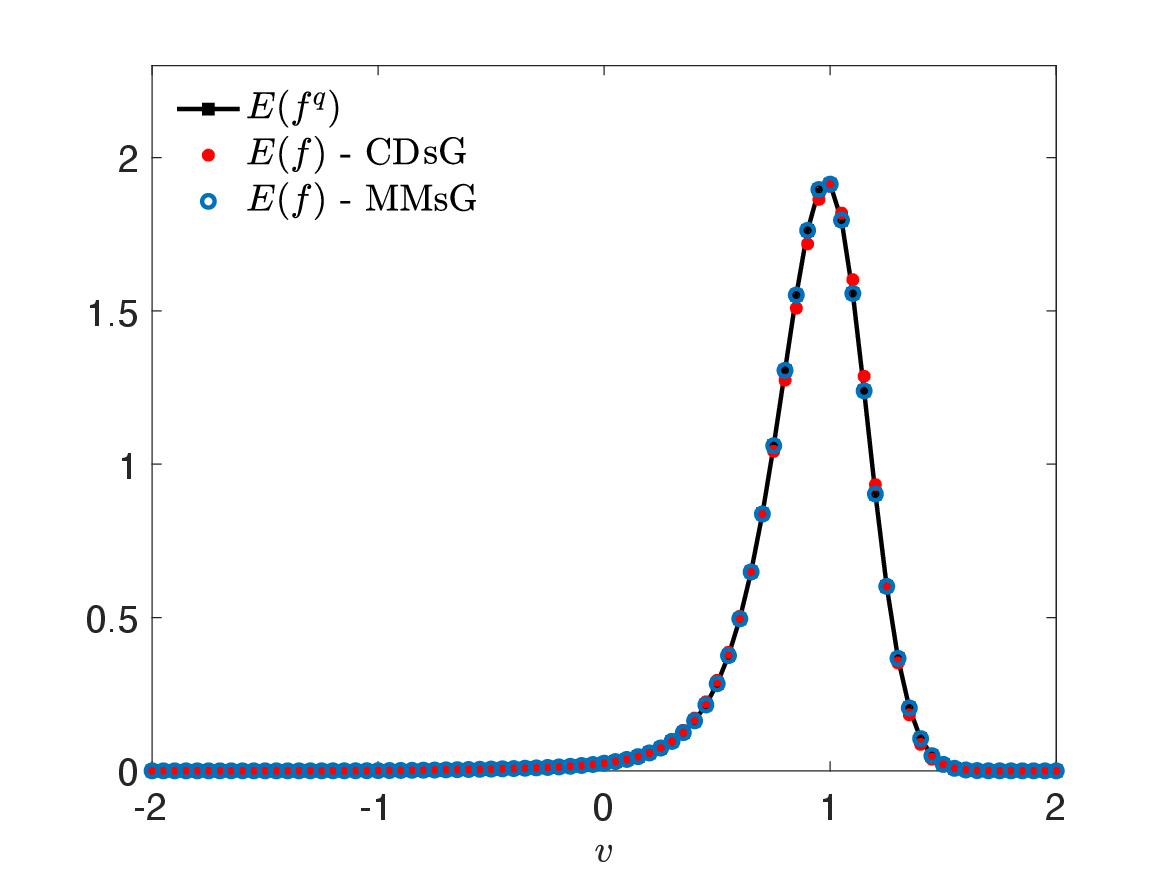}} \\
\subfigure[$\alpha = 4, t = 0$]{
\includegraphics[scale=0.26]{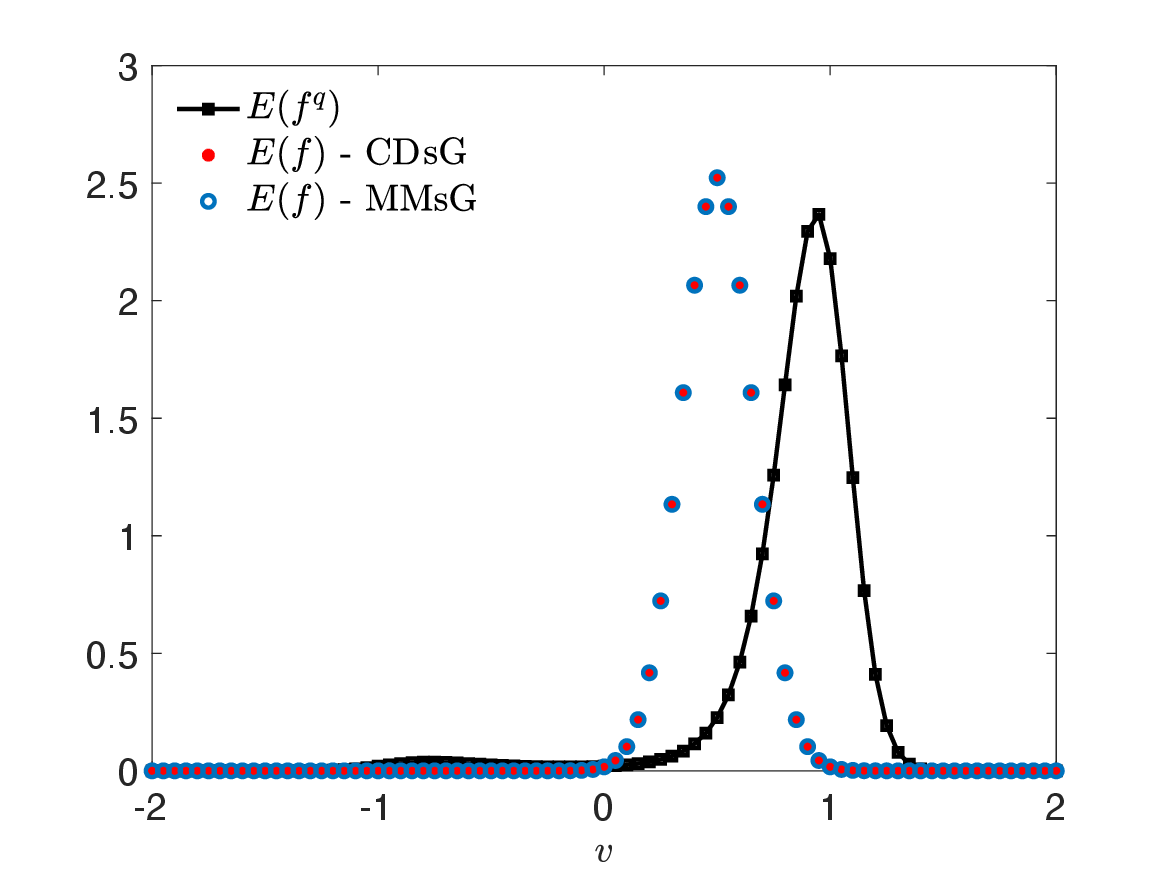}\hspace{-0.5cm}}
\subfigure[$\alpha = 4, t = 1$]{
\includegraphics[scale=0.26]{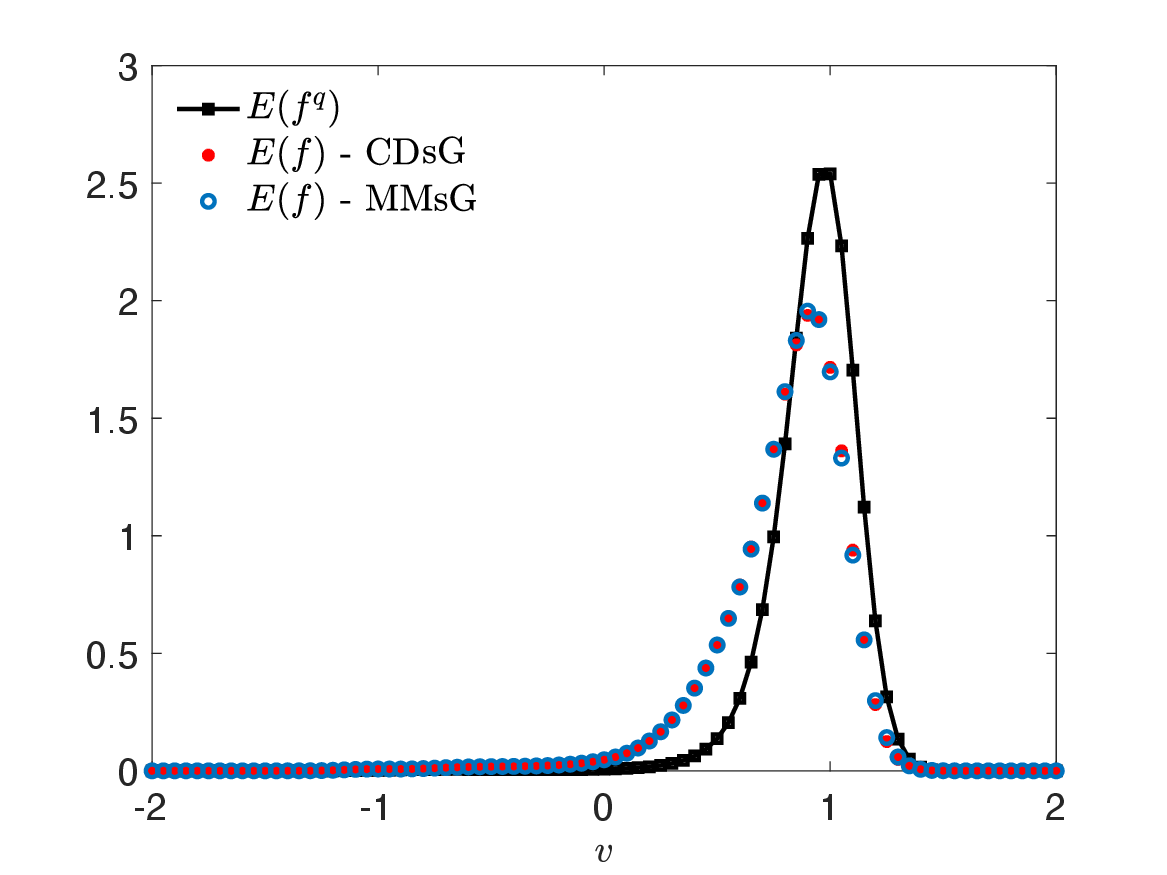}\hspace{-0.5cm}}
\subfigure[$\alpha = 4, t = 10$]{
\includegraphics[scale=0.26]{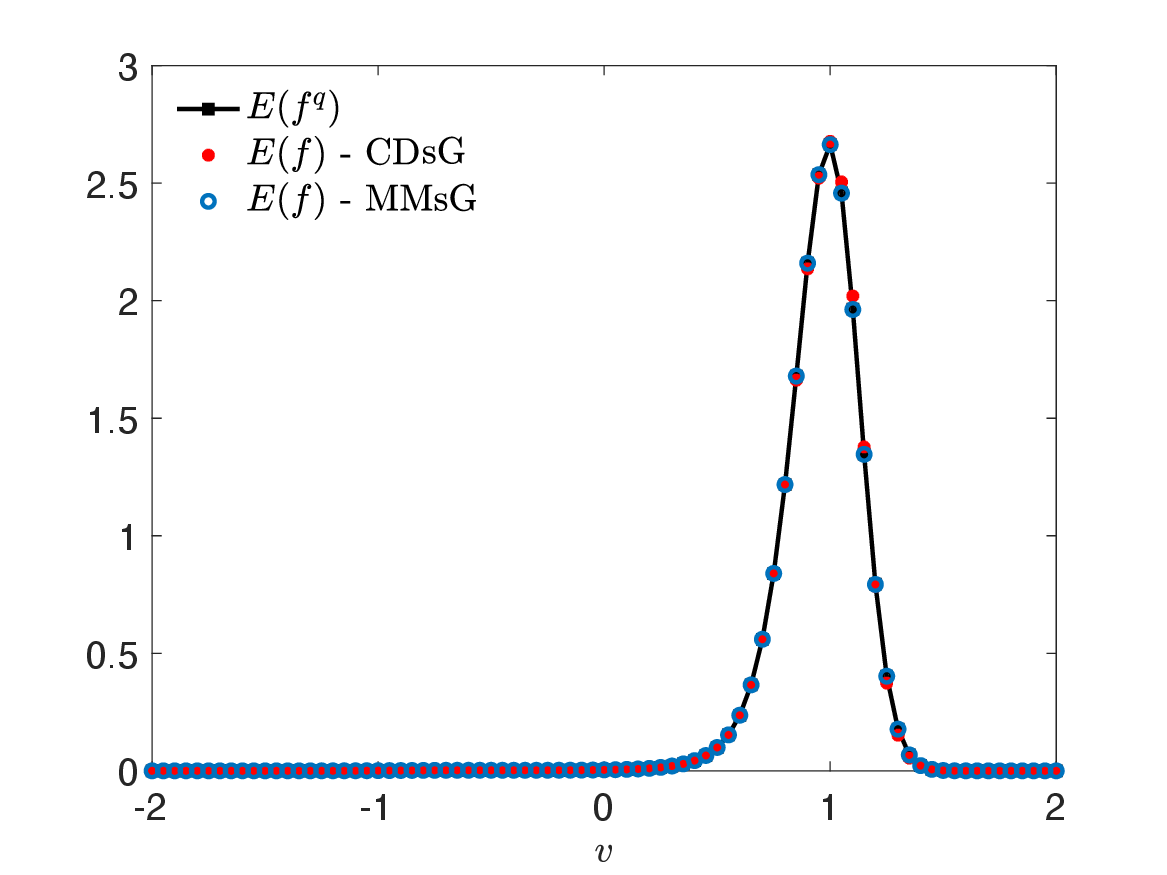}}
\caption{\textbf{Test 4}. Time evolution for the swarming model with self propulsion \eqref{eq:swarming_hom} for the CDsG and MMsG schemes and quasi-equilibrium state using $\alpha=2$ (top) and $\alpha = 4$ (bottom). }
\label{fig:evo_swarming}
\end{figure}

\begin{figure}
\centering
\includegraphics[scale=0.36]{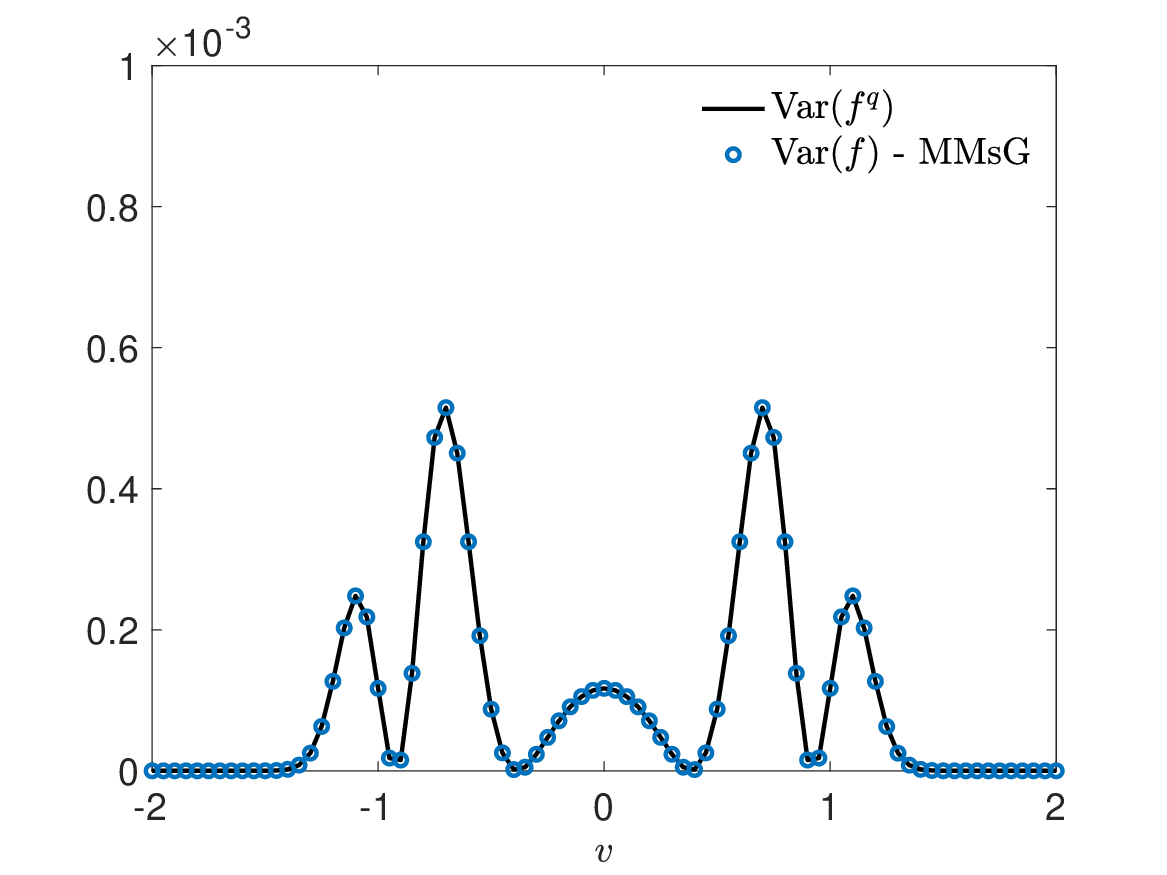}
\includegraphics[scale=0.36]{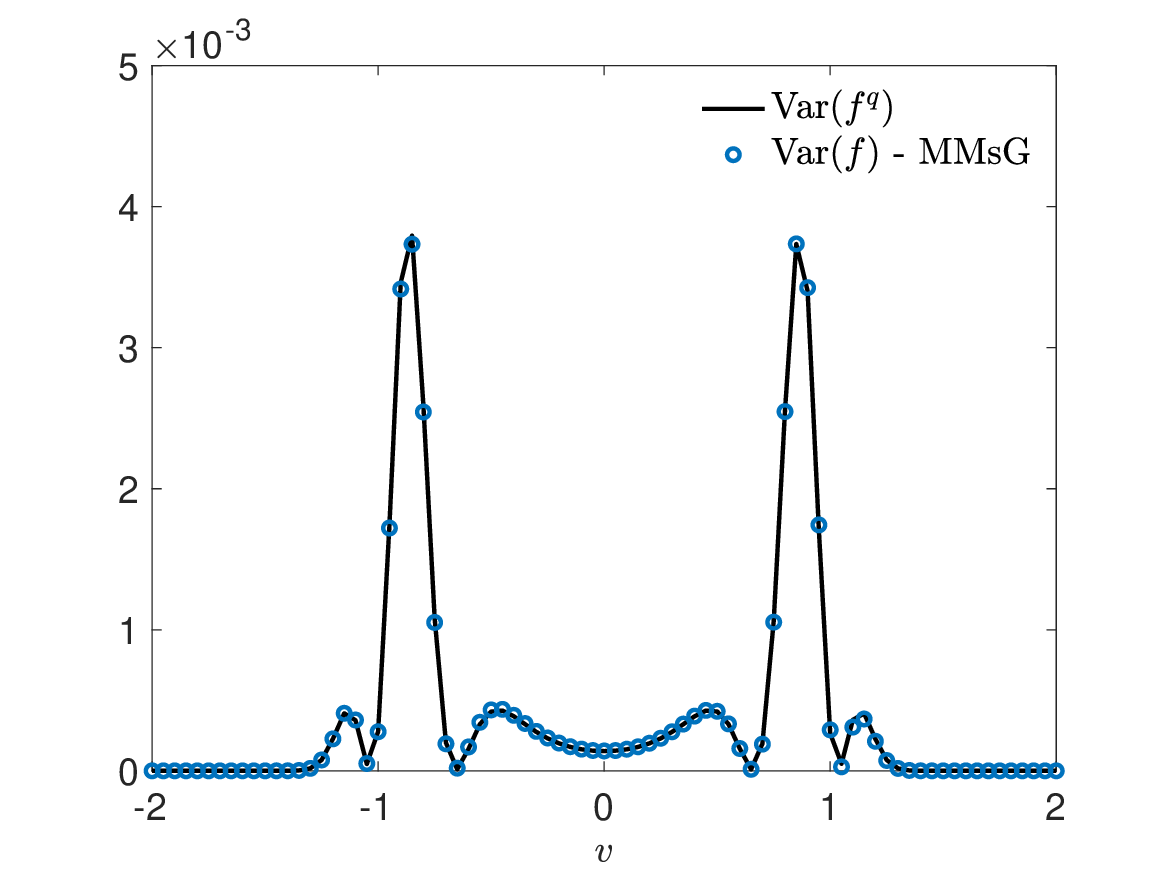}
\caption{\textbf{Test 4}. Variance for the expected distribution of the swarming model \eqref{eq:swarming_hom} at final time using MMsG scheme together with the correspondent variance of the quasi-equilibrium state using $\alpha=2$ (left) and $\alpha = 4$ (right).}
\label{fig:var_swarming}
\end{figure}

In this last test, we consider the swarming model with self-propulsion of Cucker-Smale type discussed in Section \ref{sec:swarming}. 
 
Now, in the classical gPC setting, one solves the following system of PDEs for $h=0,\dots,M$ 
\be\label{eq:swarming_system}
\begin{split}
\frac{\partial}{\partial t} \hat f_h(v,t) = \nabla_v \cdot \left[ \sum_{k=0}^M S_{hk} \hat{f}_k(z,v,t) + \sum_{k=0}^M B_{hk}\hat f_k(v,t) + \nabla_v\sum_{k=0}^M D_{hk} \hat f_k(v,t) \right],
\end{split}\ee
where the following matrices for $h,k = 0,\dots,M$ are defined
\[
\begin{split}
S_{hk}(v) &= \int_{I_z} \alpha(z)(|v|^2-1)v \Phi_h(z)\Phi_k(z)p(z)dz, \\
B_{hk}[f_M] &= \int_{I_{z}}\left(v-\int_{\RR^{d_v}}v \sum_{r=0}^M \hat f_r(v,t)\Phi_r(z)dv\right)\, \Phi_h(z)\Phi_k(z)p(z)dz, \\
D_{hk} &= \int_{I_{z}} D(z)\Phi_h(z)\Phi_k(z)p(z)dz. 
\end{split}
\]
while in the micro-macro setting one solves
\be\label{eq:swarming_system_mm}
\begin{split}
	\frac{\partial}{\partial t} \hat f_h(v,t) &= \nabla_v \cdot \left[ \sum_{k=0}^M S_{hk} \hat{f}_k(z,v,t) + \sum_{k=0}^M B_{hk}\hat f_k(v,t) + \nabla_v\sum_{k=0}^M D_{hk} \hat f_k(v,t) \right]\\
	&-\nabla_v \cdot \left[ \sum_{k=0}^M S_{hk} \hat{f^q_k}(z,v,t) + \sum_{k=0}^M B_{hk}\hat f^q_k(v,t) + \nabla_v\sum_{k=0}^M D_{hk} \hat f^q_k(v,t) \right].
\end{split}\ee
Again central differences have been used in the physical space, whereas a trapezoidal rule is applied to evaluate the quasi-equilibrium state 
\be
f^{q}(v,z,t) = C\exp \left\{-\dfrac{1}{D(z)}\left(\alpha(z)\dfrac{|v|^4}{4}+(1-\alpha(z))\dfrac{|v|^2}{2}-u_{f}(z,t) v \right) \right\}.
\ee
In this test, we considered a discretization of the domain $[-2,2]$ of $N = 81$ gridpoints and a {second order semi-implicit time integration over the time interval $[0,10]$, $\Delta t = 10^{-1}$} with the following initial distribution
\[
f_0(v) = C \exp\left\{ - \dfrac{(v-u_0)^2}{2\sigma_0^2} \right\}, \qquad u_0 = 1/2,\, \sigma_0^2 = 1/40,
\]
for which in Figure \ref{fig:evo_swarming} we present the time evolution in the case of uncertain diffusion  
\[
D(z) = \frac{1}{5}  + \dfrac{z}{10}, \qquad z\sim\mathcal U([-1,1]), 
\]
and for two different choices of self-propulsion parameter $\alpha = 2,4$ which is assumed to be a constant. {As before, a Legendre polynomial basis has been used in the random space.} We observe from this figure how the large time behavior is correctly described by both the central difference scheme and the micro macro stochastic Galerkin method. In particular, in order to highlight the good accuracy of the MMsG scheme we represent in Figure \ref{fig:var_swarming} the variance of the distribution for large times, from which we observe how we correctly catch the higher order statistics. 
\begin{figure}
\centering
\includegraphics[scale=0.35]{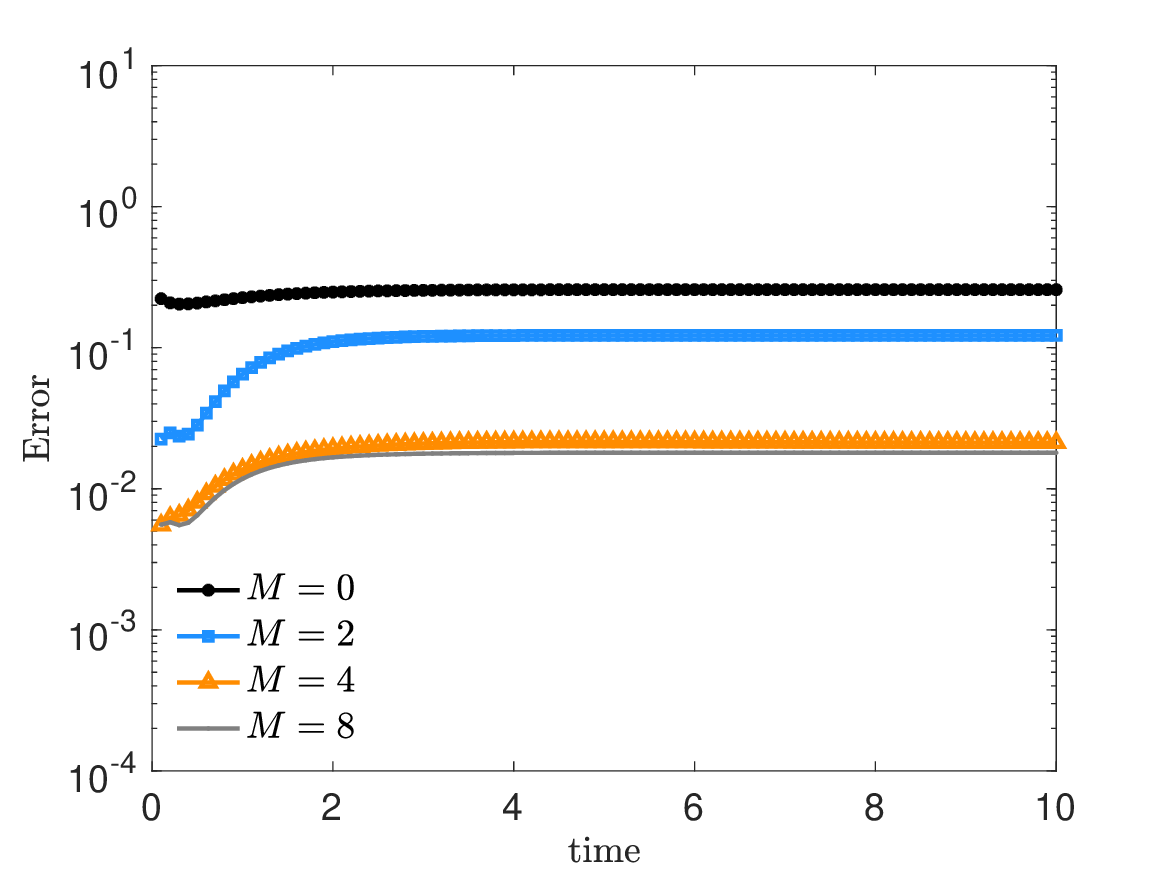}
\includegraphics[scale=0.35]{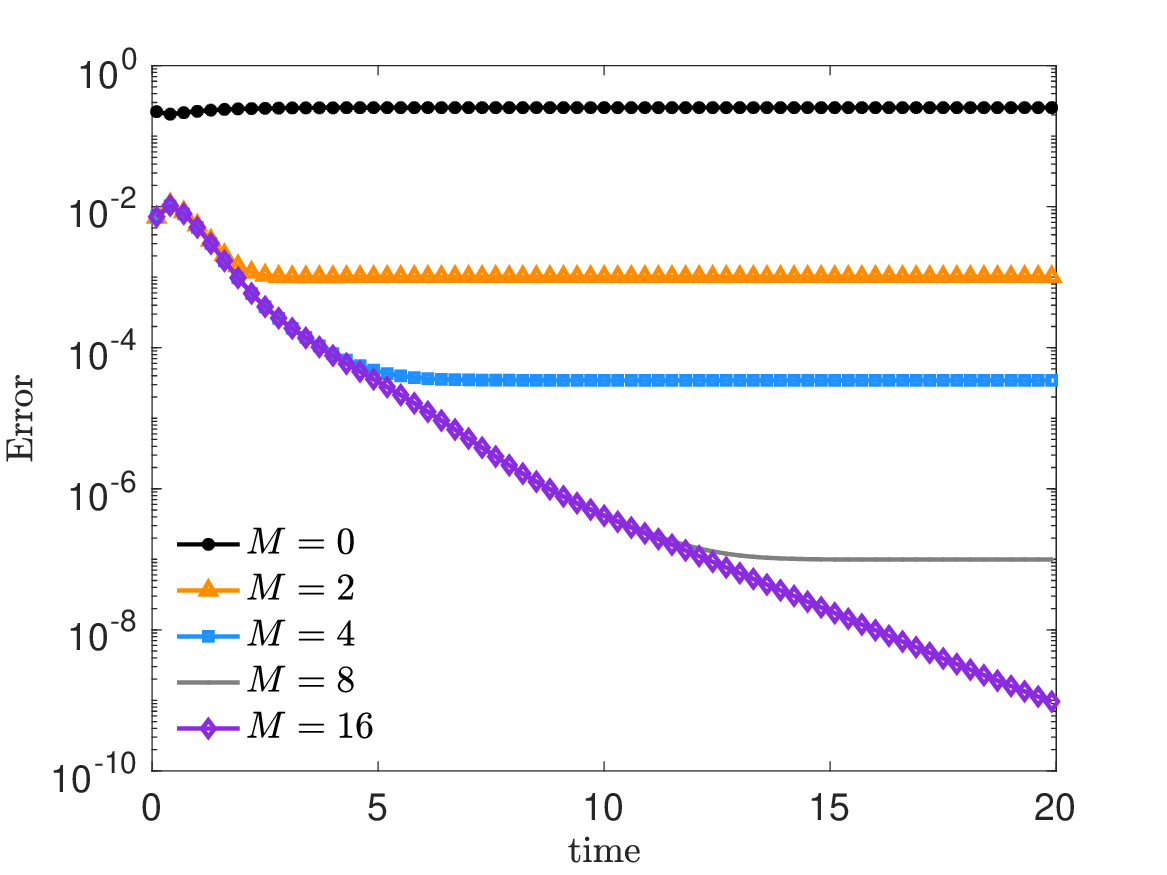}
\caption{\textbf{Test 4}. $L^2$ error for the swarming model with stochastic diffusion obtained with  central difference sG schemes (left) and with the MMsG scheme (right). The reference solution is obtained with $M=40$ and $N = 321$ mesh points.}
\label{fig:error_swarming}
\end{figure}

Finally, in Figure \ref{fig:error_swarming} we present the evolution of the $L^2$ error of the  CDsG scheme and the MMsG scheme. The reference solution is obtained with a discretization of $N = 321$ gridpoints for the interval $[-2,2]$ and $M = 40$ Galerkin projections. We observe how we improve dramatically the performance of a standard consistent scheme through the micro-macro method. Indeed with few modes we essentially reach machine precision while for the classical CS method the error as for the previous test cases saturates due to the second order accurate discretizations.

 \section*{Conclusions}\label{sec:conc}
In this work we considered the construction of stochastic Galerkin methods for linear and nonlinear Fokker-Plank equations with uncertain parameters. A key ingredient for observing the expected accuracy in the stochastic space is the regularity of the underlying model solutions. For this reason, in a first part we recalled some regularity results for several models of interest in various applications ranging from classical contexts to swarming through opinion formation and wealth distributions. In particular, these models in presence of uncertainties exhibit long time behaviors characterized by uncertain equilibrium solutions. A fundamental property of the corresponding numerical method is that it captures such steady states with high accuracy. The intrusive nature of the stochastic Galerkin approach, however, does not allow to preserve the structure preserving properties of the corresponding spatial discretizations except for the case of simple linear problems. We therefore focused on constructing a new class of numerical schemes that, thanks to an appropriate reformulation of the original problem based on a generalization of the micro-macro decomposition using the notion of quasi-equilibrium state, enjoys the property of preserving the large-time behavior of the original equations. This made it possible to obtain a new type of schemes for which the global error in the physical space decreases as the solution approaches the equilibrium state whereas spectral accuracy is achieved in the random space. The latter property, in particular, requires not only regularity in parameter space but also dependence of the equilibrium state on random parameters. Several numerical experiments have shown that indeed the expected results and improved performance of this new approach are obtained compared with standard stochastic Galerkin methods. {Future research directions will consider the design of sG methods which are exactly conservative (following the approach in \cite{Pareschi2023}), an in-depth comparison of the AP properties of sG and sC methods, and a more detailed analysis of the degenerate case where for long times the solution becomes independent of the uncertainties. }

\section*{Acknowledgments}
L.P. was partially supported by MIUR-PRIN Project 2017, No. 2017KKJP4X \emph{Innovative numerical methods for evolutionary partial differential equations and applications}.  G.D. and M.Z. were partially supported by MUR-PRIN Project 2020, No. 2020JLWP23 \emph{Integrated mathematical approaches to socio-epidemiological dynamics}. The research of M.Z. was partially supported by MUR, Dipartimenti di Eccellenza Program (2018–2022), and Department of Mathematics “F. Casorati”, University of Pavia.

\bibliographystyle{plain}

\begin{thebibliography}{99}  

\bibitem{ABBDPTZ21}
G.~Albi, G.~Bertaglia, W.~Boscheri, G.~Dimarco, L.~Pareschi, G.~Toscani, and M.~Zanella.
\newblock Kinetic modelling of epidemic dynamics: social contacts, control with uncertain data, and multiscale spatial dynamics.
\newblock In \emph{Predicting Pandemics in a Globally Connected 
World, Volume 1}, N. Bellomo and M. A. J. Chaplain (eds.),  Modeling and Simulation in Science, Engineering and Technology, Birkh\"{a}user Cham, 2022.


\bibitem{albi2016CHAPTER}
G.~Albi, L.~Pareschi, G.~Toscani, and M.~Zanella.
\newblock Recent advances in opinion modeling: control and social influence.
\newblock In {\em Active Particles Volume 1, Theory, Methods, and Applications}, N.~Bellomo, P.~Degond, and E.~Tadmor (eds.), Modeling and {S}imulation in {S}cience, {E}ngineering and {T}echnology, Birkh\"{a}user, 2016.

\bibitem{albi2015MPE}
G.~Albi, L.~Pareschi, and M.~Zanella.
\newblock Uncertainty quantification in control problems for flocking models.
\newblock {\em Math. Probl. Eng.}, 2015:1--14, 2015.

\bibitem{AJW}
A. Arnold, S. Jin, and T. W\"{o}hrer. Sharp decay estimated in local sensitivity analysis for evolution equations with uncertainties: from ODEs to linear kinetic equations. \emph{J. Diff. Eqn.}, {268}:1156--1204, 2020.

\bibitem{ATZ}
F. Auricchio, G. Toscani, M. Zanella. Trends to equilibrium for a nonlocal Fokker-Planck equation. \emph{Applied Math. Letters}, \textbf{145}: 108746, 2023. 

\bibitem{BNT}
I. Babu{s}ka, F. Nobile, and R. Tempone. A stochastic collocation method for elliptical partial differential equations with random input data. \emph{SIAM Rev.}, 52(2):317--355, 2010. 


\bibitem{Ballerini2008}
M.~Ballerini, N.~Cabibbo, R.~Candelier, A.~Cavagna, E.~Cisbani, I.~Giardina,
  V.~Lecomte, A.~Orlandi, G.~Parisi, A.~Procaccini, M.~Viale, and
  V.~Zdravkovic.
\newblock Interaction ruling animal collective behavior depends on topological
  rather than metric distance: evidence from a field study.
\newblock {\em Proc. Natl. Acad. Sci. U. S. A.}, 105(4):1232--1237, 2008.

\bibitem{BCCD}
A.B.T.~Barbaro, J.A.~Ca\~nizo, J.A.~Carrillo, and P.~Degond. Phase transitions in a kinetic model of Cucker-Smale type. \newblock\emph{Multiscale Model. Simul.} \textbf{14}(3): 1063--1088, 2016. 

\bibitem{BarDeg}
A.B.T.~Barbaro, and P.~Degond. 
\newblock Phase transition and diffusion among socially interacting self-propelled agents. 
\newblock \emph{Discrete and Continuous Dynamical Systems - Series B}, 19: 1249--1278, 2014.


\bibitem{Bob}
A.V. Bobylev, Exact solutions of the Boltzmann equation. \emph{Dokl. Akad. Nauk SSSR}, 225:1296--1299, 1975 (in Russian).

\bibitem{Bongini2017}
M.~Bongini, M.~Fornasier, M.~Hansen, and M.~Maggioni.
\newblock Inferring interaction rules from observations of evolutive systems I:
  The variational approach.
\newblock {\em Math. Models Methods Appl. Sci.}, 27(5):909--951, 2017.

\bibitem{Kuramoto}
J.~A.~Carrillo, Y-P.~Choi, L.~Pareschi. 
\newblock Structure preserving schemes for the continuum Kuramoto model: phase transitions.
\newblock {\em J. Comp. Phys.}, 376, 365--389, 2019.

\bibitem{carrillo2010MSSET}
J.~A. Carrillo, M.~Fornasier, G.~Toscani, and F.~Vecil.
\newblock Particle, kinetic, and hydrodynamic models of swarming.
\newblock In {\em Mathematical
  Modeling of Collective Behavior in Socio-Economic and Life Sciences}, G.~Naldi, L.~Pareschi, and G.~Toscani (eds), p. 297--336, Birkh\"{a}user, Boston, 2010.
  

  \bibitem{CFRT}
  J.A.~Carrillo, M.~Fornasier, J.~Rosado, and G.~Toscani. 
  \newblock
  Asymptotic flocking dynamics for the kinetic Cucker--Smale model. 
\newblock   \emph{SIAM J. Math. Anal.}, 42(1): 218--236, 2010.
  
  
  \bibitem{CZ2019}
  J.A.~Carrillo and M.~Zanella. 
  \newblock Monte Carlo gPC methods for diffusive kinetic flocking models with uncertainties. 
  \newblock \emph{Vietnam J. Math.}, 47(4): 931--954, 2019. 

\bibitem{CPZ_17}
J.~A. Carrillo, L.~Pareschi, and M.~Zanella.
\newblock Particle based gPC methods for mean-field models of swarming with uncertainty.
\newblock {\em Commun. Comput. Phys.}, 25(2):508--531, 2019.

\bibitem{carrillo1998M2AS}
J.~A. Carrillo and G.~Toscani.
\newblock Exponential convergence toward equilibrium for homogeneous
  {F}okker-{P}lanck-type equations.
\newblock {\em Math. Methods Appl. Sci.}, 21(13):1269--1286, 1998.

\bibitem{cercignani1988BOOK}
C.~Cercignani.
\newblock {\em The {B}oltzmann Equation and Its Applications}.
\newblock Number~67 in Applied Mathematical Sciences. Springer, New York, 1988.

\bibitem{CC}
J.S.~Chang, and G.~Cooper. 
\newblock A practical difference scheme for Fokker-Planck equations. 
\newblock \emph{J. Comput. Phys. }, \textbf{6}(1): 1--16, 1970. 

\bibitem{CEP}
P. G. Constantine, M. S. Eldred, and E. T. Phipps. 
\newblock Sparse pseudospectral approximation method. 
\newblock \emph{Comput. Methods Appl. Mech. Eng.}, \textbf{229-232}(1): 1--16, 2012. 


\bibitem{despresBOOKCH}
B.~Despr\'es, G.~Po{\"e}tte, and D.~Lucor.
\newblock Robust uncertainty propagation in systems of conservation laws with
  the entropy closure method.
\newblock In H.~Bijl, D.~Lucor, S.~Mishra, and C.~Schwab (eds), {\em
  Uncertainty Quantification in Computational Fluid Dynamics}, volume~92 of
  {\em Lecture Notes in Computational Science and Engineering}. Springer, 2010.

\bibitem{dimarcoMF}
G.~Dimarco, L.~Liu, L.~Pareschi and X.~Zhu.
\newblock Multi-fidelity methods for uncertainty propagation in kinetic equations.
\newblock {\em Panoramas \& Synthèses}, Société Mathématique de France, to appear.

\bibitem{dimarcopareschiACTA14}
G.~Dimarco and L.~Pareschi.
\newblock Numerical methods for kinetic equations.
\newblock {\em Acta Numerica}, 23:369--520, 2014.

\bibitem{dimarcoUQ}
G.~Dimarco and L.~Pareschi.
\newblock Multi-scale control variate methods for uncertainty quantification in
  kinetic equations.
\newblock {\em J. Comput. Phys.}, 388:63--89, 2019.

\bibitem{dimarcoUQ2}
G.~Dimarco and L.~Pareschi.
\newblock Multiscale Variance Reduction Methods Based on Multiple Control Variates for Kinetic Equations with Uncertainties.
\newblock {\em Multiscale Model. Simul.}, 18(1):351--382, 2020. 

\bibitem{dimarco2017CHAPTER}
G.~Dimarco, L.~Pareschi, and M.~Zanella.
\newblock Uncertainty quantification for kinetic models in socio--economic and
  life sciences.
\newblock In S.~Jin and L.~Pareschi (eds), {\em Uncertainty Quantification
  for Hyperbolic and Kinetic Equations}, {SEMA-SIMAI Springer
  Series}, 14. Springer, 2017.

\bibitem{dimarcotosin}
G.~Dimarco and A.~Tosin
\newblock The Aw–Rascle Traffic Model: Enskog-Type Kinetic Derivation and Generalisations
\newblock {\em J. Stat. Phys.}, 178:178--210, 2020.

\bibitem{during2009PRSA}
B.~D\"{u}ring, P.~Markowich, J.-F.~Pietschmann, and M.-T.~Wolfram.
\newblock {B}oltzmann and {F}okker-{P}lanck equations modelling opinion
  formation in the presence of strong leaders.
\newblock {\em Proc. R. Soc. A}, 465(2112):3687--3708, 2009.

\bibitem{Ernst}
M.H. Ernst. Nonlinear model-Boltzmann equations and exact solutions. \emph{Phys. Rep.}, 78(1):1--171, 1981.

\bibitem{Filbet}
F.~Filbet, L.~Pareschi, and T.~Rey.
\newblock On steady-state preserving spectral methods for homogeneous
Boltzmann equations.
\newblock {\em C. R. Acad. Sci. Paris, Ser. I }, 353, 309--314, 2015

\bibitem{Fokker}
 A.D.~Fokker.
 \newblock Die mittlere Energie rotierender elektrischer Dipole im Strahlungsfeld. 
 \newblock \emph{Ann. Phys.} \textbf{348} (4): 810--820, 1917.
 
\bibitem{furioli2017M3AS}
G.~Furioli, A.~Pulvirenti, E.~Terraneo, and G.~Toscani.
\newblock {F}okker-{P}lanck equations in the modeling of socio-economic
  phenomena.
\newblock {\em Math. Models Methods Appl. Sci.}, 27(1):115--158, 2017.

\bibitem{GSVK}
M. Gerritsma, J.-B. van der Steen, P. Vos, G. Karniadakis. Time-dependent generalized polynomial chaos. \emph{J. Comput. Phys.}, 229(22):8333--8363, 2010. 

\bibitem{GHY}
S. Gerster, M. Herty, H. Yu. Hypocoercivity of stochastic Galerkin formulations for stabilization of kinetic equations. \emph{Commun. Math. Sci.}, 19(3):787--806, 2021.

\bibitem{GHI}
S. Gerster, M. Herty, E. Iacomini. Stability analysis of a hyperbolic stochastic Galerkin formulation for the Aw-Rascle-Zhang model with relaxation. \emph{Math. Biosci. Eng.}, 18(4):4372--4389, 2021.

\bibitem{Gosse}
L.~Gosse.
\newblock {\em Computing Qualitatively Correct Approximations of Balance Laws:
Exponential-Fit, Well-Balanced and Asymptotic-Preserving}.
\newblock SEMA SIMAI Springer Series, 2013.

\bibitem{ha2008KRM}
S.-Y.~Ha, and E.~Tadmor.
\newblock From particle to kinetic and hydrodynamic descriptions of flocking.
\newblock {\em Kinet. Relat. Models}, 1(3):415--435, 2008.

\bibitem{HP}
M.~Herty, and L.~Pareschi.
\newblock Fokker-Planck Asymptotics for Traffic Flow Models.
\newblock {\em Kinet. Relat. Models}, 3(1):165--179, 2010.

\bibitem{Hu2016}
J.~Hu and S.~Jin.
\newblock A stochastic {G}alerkin method for the {B}oltzmann equation with
  uncertainty.
\newblock {\em J. Comput. Phys.}, 315:150--168, 2016.

\bibitem{Hu2017}
J.~Hu and S.~Jin.
\newblock Uncertainty Quantification for Kinetic Equations. 
\newblock In S.~Jin and L.~Pareschi (eds), {\em Uncertainty Quantification
  for Hyperbolic and Kinetic Equations}, {SEMA-SIMAI Springer
  Series}, 14. Springer, 2017.
  
  \bibitem{JXZ}
  S. Jin, D. Xiu, X. Zhu. Asymptotic-preserving methods for hyperbolic and transport equations with random inputs and diffusive scalings. \emph{J. Comput. Phys.}, 289: 35--52, 2015.
  
\bibitem{Lions}
C.~Le Bris and P.-L.~Lions.
\newblock Renormalized solutions of some transport equations with partially W 1,1 velocities and applications. 
\newblock {\em Ann. Mat. Pura Appl.}, 183: 97--130, 2004.

\bibitem{Lions1}
C.~Le Bris and P.-L.~Lions.
\newblock Existence and Uniqueness of Solutions to Fokker–Planck Type Equations with Irregular Coefficients.
\newblock {\em Commun. Part. Diff. Eq.}, 33:1272--1317, 2008.

\bibitem{LW}
Q.~Li, L.~Wang.
\newblock Uniform regularity for linear kinetic equations with random input based on hypocoercivity
\newblock {\em SIAM/ASA J. Uncertain. Quant.}, 5:1193--1219, 2017.

\bibitem{Liao}
J.~Liao, and H.~Liu.
\newblock
Sensitivity analysis of the non-linear Fokker–Planck equations with uncertainty.
\newblock {\em Nonlinear Analysis: Real World Applications}, 64, 103450, 2022.

\bibitem{LLiu}
L. Liu, L. Pareschi, X. Zhu. A bi-fidelity stochastic collocation method for transport equations with diffusive scaling and multi-dimensional random inputs. \emph{J. Comput. Phys.}, 462: 111252, 2022. 

\bibitem{Liu}
T.-P.~Liu and S.-H.~Yu.
\newblock Boltzmann equation: micro-macro decompositions and positivity of
  shock profiles.
\newblock {\em Comm. Math. Phys.}, 246(1):133--179, 2004.

\bibitem{MS}
S. Mishra, C. Schwab. Sparse tensor multi-level Monte Carlo finite volume methods for hyperbolic conservation laws with random initial data. \emph{Math. Comp.}, 81(280): 1979–2018, 2012. 


\bibitem{Pareschi}
L.~Pareschi. 
\newblock An introduction to uncertainty quantification for kinetic equations and related problems.
\newblock In {\em Trails in Kinetic Theory: Foundational Aspects and Numerical Methods}. 
\newblock {SEMA-SIMAI Springer Series}, 25:141--181, 2021.

\bibitem{Pareschi2017}
L.~Pareschi and T.~Rey.
\newblock Residual equilibrium schemes for time dependent partial differential
  equations.
\newblock {\em Comput. Fluids}, 156(12):329--342, 2017.


\bibitem{Pareschi2023}
L.~Pareschi and T.~Rey.
\newblock Moment preserving Fourier-Galerkin spectral methods and application to the Boltzmann equation.
\newblock {\em SIAM Journal of Numerical Analysis}, 60(6):329--342, 2023.

\bibitem{pareschitoscani} 
L.~Pareschi and G.~Toscani. 
\newblock Self-similarity and power-like tails in nonconservative kinetic models.
\newblock {\em J.Stat. Phys.}, 124, 747--779, 2006. 

\bibitem{pareschi2013BOOK}
L.~Pareschi and G.~Toscani.
\newblock {\em Interacting {M}ultiagent {S}ystems: {K}inetic equations and
  {M}onte {C}arlo methods}.
\newblock Oxford University Press, 2013.

\bibitem{PZ2018}
L.~Pareschi and M.~Zanella.
\newblock Structure preserving schemes for nonlinear {F}okker-{P}lanck
  equations and applications.
\newblock {\em J. Sci. Comput.}, 74(3):1575--1600, 2018.

\bibitem{PTTZ}
L.~Pareschi, G. Toscani, A. Tosin, and M.~Zanella.
\newblock Hydrodynamics models of preference formation in multi-agent societies. 
\newblock \emph{J. Nonlinear Sci.}, \textbf{29}(6): 2761--2796, 2019.

\bibitem{Planck}
M.~Planck. \"{U}ber einen Satz der statistischen Dynamik und seine Erweiterung in der Quantentheorie. 
\newblock \emph{Sitzungsberichte der Preussischen Akademie der Wissenschaften zu Berlin.} \textbf{24}: 324--341, 1917.

\bibitem{Rosenbluth} 
M.N. Rosenbluth. 
\newblock Fokker–Planck Equation for an Inverse-Square Force. 
\newblock {\em Phys. Rev.}, 107(1): 1--6, 1957.

\bibitem{TZ}
T. Tang and T. Zhou. Convergence analysis for stochastic collocation methods to scalar hyperbolic equations with a random wave speed. \emph{Comm. Comp. Phys.}, 8:226–248, 2010.


\bibitem{toscani1999quarterly}
G.~Toscani.
\newblock Entropy production and the rate of convergence to equilibrium for the
  {F}okker--{P}lanck equation.
\newblock {\em Q. Appl. Math.}, LVII(3):521--541, 1999.

\bibitem{Tosc}
G. Toscani. 
\newblock Kinetic models of opinion formation. 
\newblock \emph{Commun. Math. Sci.}, \textbf{4}(3): 481--496, 2006. 

\bibitem{TZ_17}
A.~Tosin and M.~Zanella.
\newblock Boltzmann-type models with uncertain binary interactions.
\newblock {\em Comm. Math. Sci.}, 16(4):962--984, 2018.

\bibitem{TZ19}
A.~Tosin, M.~Zanella. 
\newblock Uncertainty damping in kinetic traffic models by driver-assist controls.
\newblock {\em Math. Control. Relat. Fields}, 11(3):681--713, 2021. 
 
\bibitem{villani2002BOOK}
C.~Villani.
\newblock A review of mathematical topics in collisional kinetic theory.
\newblock In S.~Friedlander and D.~Serre, editors, {\em Handbook of
  Mathematical Fluid Mechanics}, volume~I, pages 71--305. North--Holland, 2002.
  
  \bibitem{XF}
  T. Xiao, and M. Frank. A stochastic kinetic scheme for  multi-scale plasma transport with uncertainty quantification. \emph{J. Comput. Phys.}, 432:110139, 2021.

\bibitem{xiu2010BOOK}
D.~Xiu.
\newblock {\em Numerical Methods for Stochastic Computations}.
\newblock Princeton University Press, Princeton, NJ, 2010.

\bibitem{xiu_hesth}
D. Xiu, and J. S. Hesthaven, High-order collocation methods for differential equations with random inputs. \emph{SIAM J. Sci. Comput.}, 27(3): 1118--1139, 2005.

\bibitem{xiu2002SISC}
D.~Xiu and G.~E. Karniadakis.
\newblock The {W}iener-{A}skey polynomial chaos for stochastic differential
  equations.
\newblock {\em SIAM J. Sci. Comput.}, 24(2):619--644, 2002.

\bibitem{Zan20}
M.~Zanella.  
\newblock Structure preserving stochastic Galerkin methods for Fokker-Planck equations with background interactions. 
\newblock \emph{Math. Comput. Simulat.}, 168:28-47, 2020.

\bibitem{zhu2017MMS}
Y.~Zhu and S.~Jin.
\newblock The {V}lasov-{P}oisson-{F}okker-{P}lanck system with uncertainty and
  a one--dimensional asymptotic preserving method.
\newblock {\em Multiscale Model. Simul.}, 15(4):1502--1529, 2017.

\bibitem{zhu2018SIAM}
Y.~Zhu and S.~Jin.
\newblock Hypocoercivity and uniform regularity for the Vlasov--Poisson--Fokker--Planck system with uncertainty and multiple scales.
\newblock {\em SIAM J. Math. Anal.}, 50(2):1790-1816, 2018.

\end{thebibliography}

\appendix
\section{An exact solution of the Fokker-Planck equation}
\label{appA}
In the following we summarize the derivation of an exact solution for the Fokker-Planck equation 
\be
\partial_t f(\z,v,t) = K(\z)\partial_v \left[ vf(\z,v,t) + \sigma(\z) \partial_v f(\z,v,t)\right], 
\label{eq:FPu}
\ee
in the presence of uncertain quantities. The methodology is similar to the one presented in^^>\cite{Bob,Ernst} in the context of Maxwellian models of kinetic theory. 

We consider a deterministic initial distribution 
\[
f_0(\z,v) = \alpha(\z) v^2 \exp\{-\beta(\z) v^2\},\qquad v\in \mathbb R, 
\]
such that
\begin{enumerate}
\item[$i)$] $\displaystyle\int_{\mathbb R} f_0(v) dv = \dfrac{\sqrt{\pi}}{2} \dfrac{\alpha}{\beta\sqrt{\beta}}$, \\
\item[$ii)$]$\displaystyle\int_{\mathbb R} vf_0(v) dv = 0$, \\
\item[$iii)$]$\displaystyle\int_{\mathbb R} v^2 f_0(v)dv = \dfrac{3}{4}\sqrt{\pi}\dfrac{\alpha}{\beta^2\sqrt{\beta}}. $
\end{enumerate}
These quantities are conserved in time. Hence, we can determine $\alpha(\z),\beta(\z)>0$ to guarantee unitary mass and temperature $\sigma(\z)>0$
\[
\begin{cases}
\dfrac{\sqrt{\pi}}{2} \dfrac{\alpha}{\beta\sqrt{\beta}}  &= 1, \\
\dfrac{3}{4}\sqrt{\pi}\dfrac{\alpha}{\beta^2\sqrt{\beta}} &= \sigma(\z).
\end{cases}
\]
This condition translates in the following expression 
\[
\alpha(\z) = \dfrac{3}{2\sigma(\z)}\sqrt{\dfrac{3}{2\sigma(\z)}},\qquad \beta(\z) = \dfrac{3}{2\sigma(\z)}.
\]
In order to determine an explicit solution of the Fokker-Planck equation \eqref{eq:FPu} with initial datum $f_0(\z,v)$ we consider the class of solutions given by 
\[
f(\z,v,t) = (A(\z,t)+B(\z,t)v^2)\exp\{-s(\z,t)v^2\},
\]
where, by imposing conservation of mass and energy, $A(\z,t,)$, $B(\z,t)$, and $s(\z,t)$ satisfy the following system 
\begin{align*}
\dfrac{A\sqrt{\pi}}{\sqrt{s}} + \dfrac{B\sqrt{\pi}}{2s\sqrt{s}} =& \dfrac{\sqrt{\pi}}{2} \dfrac{\alpha}{\beta\sqrt{\beta}},  \\
\dfrac{A\sqrt{\pi}}{2s\sqrt{s}} + \dfrac{3B\sqrt{\pi}}{4s^2\sqrt{s}} =& \dfrac{3}{4}\sqrt{\pi}\dfrac{\alpha}{\beta^2\sqrt{\beta}},
\end{align*}
that yields  
\begin{equation}
\label{eq:AB}
\begin{cases}
A =& \dfrac{\alpha}{2\beta\sqrt{\beta}}\sqrt{s} - \dfrac{s\sqrt{s}}{2}\dfrac{\alpha}{2\beta\sqrt{\beta}} \left(\dfrac{3}{\beta}-\dfrac{1}{s} \right),  \\
B = &s^2\sqrt{s} \dfrac{\alpha}{2\beta\sqrt{\beta}} \left( \dfrac{3}{\beta}-\dfrac{1}{s}\right),
\end{cases}
\end{equation}
where for brevity we omitted the dependency of $A,B$, and $s$ on $(\z,t)$.\\
By imposing that $f(\z,v,t)$ is solution of the Fokker-Planck equation \eqref{eq:FPu} 
we get
\[
\partial_t f(\z,v,t) = \left[\dot{A} + (-A\dot{s} + \dot{B})v^2 - B\dot{s}v^4\right]\exp\{-sv^2\}
\]
and 
\[
\begin{split}
 &\partial_v \left[ vf(\z,v,t) + \sigma(\z) \partial_v f(\z,v,t)\right]  \\
 &\qquad = v^0 \exp\{-sv^2\} \left[ A - 2A\sigma(\z) s+2\sigma(\z) B \right]  \\
 &\qquad \quad +v^2 \exp\{-sv^2\} \left[ -2sA + 3B+ \sigma(\z) A 4 s^2 - 4s\sigma(\z) B\right] \\
 &\qquad\quad+ v^4 \exp\{-sv^2\} \left[ -2sB+ 4s^2B\sigma\right],
\end{split}\]
where again the dependency of $A,B$, and $s$ on $\z$ and time has been omitted. Equating now the terms in $v^{2n}$, $n = 0,1,2$ we reduce to solving 
\[
\begin{cases}
\dot{A} = K(\z)( A - 2A\sigma(\z) s+2\sigma(\z) B) \\
-A\dot{s} + \dot{B} =  K(\z)(-2sA + 3B+ \sigma(\z) A 4 s^2 - 4s\sigma(\z) B) \\
\dot{s} = K(\z)(2s - 4\sigma s^2), 
\end{cases}
\]
that, under relations \eqref{eq:AB}, are all equivalent to the following differential equation depending on the uncertain parameters 
\[
\dot{s} = K(\z)(2s - 4\sigma s^2). 
\]
Finally, we can complement the above differential equation with the initial datum $s(\z,0) = \beta(\z)$ to determine the following explicit expression
\begin{equation}
\label{eq:sevo}
s(\z,t) = \dfrac{e^{2K(\z)t}}{2\sigma(\z) e^{2K(\z)t}-{4\sigma(\z)}/{3}}.
\end{equation}

\end{document}